\titleformat{\section}{\normalfont\large\bfseries}{\thesection.}{1em}{}
\titleformat{\subsection}{\normalfont\normalsize\bfseries}{\thesubsection.}{1em}{}
\titleformat{\subsubsection}{\normalfont\normalsize\itshape}{\thesubsubsection.}{1em}{}
\titlespacing\section{0pt}{8pt plus 4pt minus 2pt}{4pt plus 2pt minus 2pt}
\titlespacing\subsection{0pt}{8pt plus 4pt minus 2pt}{4pt plus 2pt minus 2pt}
\titlespacing\subsubsection{0pt}{8pt plus 4pt minus 2pt}{4pt plus 2pt minus 2pt}
\DeclareRobustCommand\widecheck[1]{{\mathpalette\@widecheck{#1}}}
\def\@widecheck#1#2{%
    \setbox\z@\hbox{\m@th$#1#2$}%
    \setbox\tw@\hbox{\m@th$#1%
       \widehat{%
          \vrule\@width\z@\@height\ht\z@
          \vrule\@height\z@\@width\wd\z@}$}%
    \dp\tw@-\ht\z@
    \@tempdima\ht\z@ \advance\@tempdima2\ht\tw@ \divide\@tempdima\thr@@
    \setbox\tw@\hbox{%
       \raise\@tempdima\hbox{\scalebox{1}[-1]{\lower\@tempdima\box
\tw@}}}%
    {\ooalign{\box\tw@ \cr \box\z@}}}
\renewenvironment{proof}[1][\proofname]{\par
  \vspace{-.7\topsep}
  \pushQED{\qed}%
  \normalfont
  \topsep0pt \partopsep0pt 
  \trivlist
  \item[\hskip\labelsep
        \itshape
    #1\@addpunct{.}]\ignorespaces
}{%
  \popQED\endtrivlist\@endpefalse
  \addvspace{6pt plus 6pt} 
}
\newcommand{\subalign}[1]{%
  \vcenter{%
    \Let@ \restore@math@cr \default@tag
    \baselineskip\fontdimen10 \scriptfont\tw@
    \advance\baselineskip\fontdimen12 \scriptfont\tw@
    \lineskip\thr@@\fontdimen8 \scriptfont\thr@@
    \lineskiplimit\lineskip
    \ialign{\hfil$\m@th\scriptstyle##$&$\m@th\scriptstyle{}##$\hfil\crcr
      #1\crcr
    }%
  }%
}
\DeclareTextCommand{\textprime}{\encodingdefault}{%
  \mbox{$\m@th'\kern-\scriptspace$}%
}
\theoremstyle{definition}
\newtheorem{theorem}{Theorem}[section]
\newtheorem*{mainthm}{Main Theorem}
\newtheorem{prop}[theorem]{Proposition}
\newtheorem{cor}[theorem]{Corollary}
\newtheorem{lemma}[theorem]{Lemma}
\newtheorem{definition}[theorem]{Definition}
\theoremstyle{remark}
\newtheorem*{remark}{Remark}
\numberwithin{equation}{section}
\newcommand{\tbf}{\textbf}
\newcommand{\mbf}{\mathbf}
\newcommand{\mbb}{\mathbb}
\newcommand{\mcal}{\mathcal}
\newcommand{\mfrak}{\mathfrak}
\newcommand{\tsup}{\textsuperscript}
\newcommand{\tsub}{\textsubscript}
\renewcommand{\hat}{\widehat}
\renewcommand{\tilde}{\widetilde}
\renewcommand{\bar}{\overline}
\newcommand{\Z}{\mbb{Z}}
\newcommand{\R}{\mbb{R}}
\newcommand{\C}{\mbb{C}}
\newcommand{\del}{\partial}
\newcommand{\grad}{\nabla}
\newcommand{\abs}[1]{\left\lvert #1 \right\rvert}
\newcommand{\norm}[1]{\lVert #1 \rVert}
\newcommand{\upindex}[1]{^{(#1)}}
\begin{document}
\title{Asymmetric Self-similar Spiral Solutions of\\2-D Incompressible Euler Equations}
\author{Hyungjun Choi}
\date{\today}
\maketitle

\begin{abstract}
We construct nonradial, self-similar solutions to the two-dimensional incompressible Euler equations without assuming rotational symmetry. These solutions extend the study of self-similar algebraic spiral flows, initiated by Elling and further developed by Shao–Wei–Zhang \cite{SWZ25}, where $m$-fold symmetry with $m \geq 2$ was assumed. Moreover, they bear resemblance to the numerical simulations of Bressan-Shen \cite{BS21}, in connection with the ongoing investigation into non-uniqueness of solutions.
\end{abstract}


\section{Introduction}
We study solutions of the 2-D incompressible Euler equations
\begin{equation} \label{eq:1.1} \left\vert \begin{array}{rcl}
	\del_t u + u \cdot \grad u + \grad p &=& 0, \\
	u(t=0) &=& u_0.
\end{array} \right.\end{equation}
Considering the vorticity $\omega = -\del_2 u_1 + \del_1 u_2$, we obtain the vorticity equation:
\begin{equation} \label{eq:1.2} \left\vert \begin{array}{rcl}
	\del_t \omega + u \cdot \grad \omega &=& 0, \\
	u = \grad^\perp \psi,~\Delta\psi &=& \omega, \\
	\omega(t=0) &=& \omega_0,
\end{array} \right.\end{equation}
where $\psi$ is the stream function and $\grad^\perp = (-\del_2, \del_1)^t$. Since $\omega$ is transported by a divergence-free vector field $u$, the distribution of $\omega$ is conserved in time. In particular, $\norm{\omega(t,\cdot)}_{L^\infty} = \norm{\omega_0}_{L^\infty}$. This, together with the Beale-Kato-Majda continuation criterion, guarantees the global existence of smooth solutions. On the other hand, the borderline spaces $\omega\in L^\infty$ or $u\in C^1$ are ill-posed; see \cite{BD15, EM20}.

We now turn to the study of non-smooth solutions. A distribution $u(t,x) \in L^2_{\text{loc}}([0,\infty) \times \R^2; \R^2)$ is called a \textit{weak solution} to the Euler equation \eqref{eq:1.1} with initial data $u_0$ if
\begin{equation*}
\int_{\R^2} u_0 \phi(t=0) \,dx + \int_0^\infty \int_{\R^2} u \cdot \del_t \phi + (u \otimes u) : \grad \phi \,dx \,dt = 0,
\end{equation*}
for all divergence-free test functions $\phi(t,x) \in C^\infty_c([0,\infty) \times \R^2;\R^2)$. A distributional weak solution $\omega$ to \eqref{eq:1.2} is defined analogously. Yudovich \cite{Yud63} proved that there is a unique weak solution $\omega \in L^\infty(\R; L^1\cap L^\infty(\R^2))$, provided $\omega_0 \in L^1\cap L^\infty$ and $u_0\in L^2$. The existence of weak solutions has been extended to broader classes of initial data. DiPerna and Majda \cite{DM87} showed the existence of the weak solution of 2-D incompressible Euler equations with initial data $\omega_0 \in L^1 \cap L^p$, $u_0 \in L^2_{\text{loc}}$ that conserves $\norm{\omega}_{L^1\cap L^p}$. Later, Delort \cite{Del91} extended the existence result to bounded, distinguished sign measure $\omega_0 \in \mcal{M} \cap H^{-1}_{\text{loc}}$, which includes the vortex sheet case.

For the uniqueness, $L^\infty$-control of the vorticity can be relaxed to the so-called Yudovich space \cite{Yud95}. See \cite{Tan04, BK14, BH15} for further developments on the well-posedness of weak solutions with unbounded vorticity. The integrability condition of the vorticity can also be altered. Benedetto-Marchioro-Pulvirenti \cite{BMP93} showed the well-posedness of the class $\omega_0 \in L^p \cap L^\infty$, $|u_0| \lesssim 1 + |x|^\alpha$, for $p<2/\alpha$, $\alpha < 1$. While Serfaty \cite{Ser95} established the well-posedness of the class $\omega_0, u_0 \in L^\infty$, without any integrability condition. More recently, Elgindi-Jeong \cite{EJ20} showed that $\omega_0 \in L^\infty$ with $m$-fold rotational symmetry for $m\geq 3$ is sufficient for the well-posedness.

In this work, we are interested in \textit{self-similar solutions} of the 2-D incompressible Euler equation, which take the form
\begin{equation} \label{eq:1.3}
\omega(t,x) = t^{-1}\Omega(t^{-\mu} x), \quad u(t,x) = t^{-1+\mu} U(t^{-\mu}x).
\end{equation}
The vorticity profile $\Omega$ is singular at the origin, is locally integrable, and could be sign-changing. The initial data is $(-1/\mu)$-homogeneous, placing it outside the scope of the existence and uniqueness results discussed above. In the remainder of the introduction, we review previous work in this direction and present our main result. These self-similar spiral solutions are potentially related to non-uniqueness phenomena in the 2-D incompressible Euler equations in two distinct contexts, which we elaborate on in section~\ref{subsec:1.2}.

\subsection{Self-similar spiral solutions and main result}
Algebraic spirals naturally arise from self-similar solutions of the 2-D incompressible Euler equations. If a self-similar profile \eqref{eq:1.3} solves the vorticity equation \eqref{eq:1.2}, then $\Omega$ satisfies
\begin{equation} \label{eq:1.4}
\left\vert \begin{array}{rcl} (U - \mu x) \cdot \grad \Omega &=& \Omega, \\ U &=&\grad^\perp \Delta^{-1}\Omega. \end{array} \right.
\end{equation}
Since the radial vortex $\omega(x) = \omega(|x|)$ is a steady state solution to \eqref{eq:1.2}, the function
\begin{equation*}
\bar{\Omega}(x) = C |x|^{-\frac{1}{\mu}},
\end{equation*}
is a solution to \eqref{eq:1.4}. The characteristic curve of the equation is $\theta = \theta_0 + C(2 - \frac{1}{\mu})^{-1} r^{-\frac{1}{\mu}}$, where $(r,\theta)$ is the polar coordinates. This illustrates how algebraic spirals emerge as perturbations of $\bar{\Omega}$. In a series of works \cite{Ell13, Ell16}, Elling established the following:

\begin{theorem}[\cite{Ell16}, Theorem 1] \label{thm:1.1}
Given $\mu > \frac{2}{3}$ and $M> 0$, there is $N_0 \in \mbb{Z}_+$ such that a weak solution $u$ of \eqref{eq:1.1} exists with initial data $\omega_0 = r^{-\frac{1}{\mu}} \mathring{\omega}(\theta)$, whenever $\mathring{\omega}$ is $N$-fold rotationally symmetric for $N\geq N_0$ and
\[ \sum_{n\neq 0} |\hat{\mathring{\omega}}_n| \leq M |\hat{\mathring{\omega}}_0|.\]
\end{theorem}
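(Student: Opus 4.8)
\emph{Strategy.} The plan is to look for a self-similar profile of the form \eqref{eq:1.3}, i.e. to solve the profile equation \eqref{eq:1.4} for $\Omega$ on $\R^2\setminus\{0\}$ with $N$-fold symmetry and with the prescribed homogeneous behaviour $\Omega(x)\sim|x|^{-1/\mu}\mathring{\omega}(\theta)$ as $|x|\to\infty$, and then to recover the weak solution of \eqref{eq:1.1} by unscaling. Since \eqref{eq:1.4} is invariant under $(\Omega,U)\mapsto(\lambda\Omega,\lambda U)$ together with a rescaling of $t$, I would normalize $\hat{\mathring{\omega}}_0=1$ (the hypothesis forces $\hat{\mathring{\omega}}_0\neq0$, else $\mathring{\omega}\equiv0$) and split $\mathring{\omega}=1+\mathring{\omega}_\perp$, where $\mathring{\omega}_\perp$ has zero mean and Fourier support in $|n|\geq N$, with $\sum_{n\neq0}|\hat{\mathring{\omega}}_n|\leq M$. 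The radial profile $\bar{\Omega}(x)=|x|^{-1/\mu}$ already solves \eqref{eq:1.4}, with azimuthal background velocity $\bar{U}=c_\mu r^{1-1/\mu}\hat\theta$, $c_\mu=(2-\tfrac1\mu)^{-1}>0$, and the background drift $\bar{V}:=\bar{U}-\mu x$ has strictly inward radial part $-\mu r$: its characteristics run from $r=\infty$ (where the data sits) into the origin, winding by $\theta-\theta_0=c_\mu r^{-1/\mu}$. I would therefore pass to the ``unwinding'' coordinates $\sigma=\log(1/r)\in\R$ and $\phi=\theta-c_\mu e^{\sigma/\mu}\in\R/\tfrac{2\pi}{N}\Z$, in which the fundamental domain is a cylinder, $\bar{V}\cdot\grad$ becomes the constant-coefficient operator $\mu\,\partial_\sigma$, and the relation $\Omega=e^{t}\,\Omega|_{t=-\infty}$ along characteristics suggests the ansatz $\Omega=e^{\sigma/\mu}(1+F(\sigma,\phi))$ with $F\to\mathring{\omega}_\perp$ as $\sigma\to-\infty$.

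\emph{Linear transport and the size of the induced velocity.} Given a perturbation velocity $W:=U-\bar{U}$, rewriting \eqref{eq:1.4} in $(\sigma,\phi)$ yields $\mu\,\partial_\sigma F=\mcal{L}_W F$, where every term of $\mcal{L}_W$ carries a factor of $W$ or of the $\phi$-derivative of the winding and is hence genuinely lower order when $N$ is large; solving this as an ODE in $\sigma$ with the boundary value at $\sigma=-\infty$ expresses $F$ as an explicit integral of $\mathring{\omega}_\perp$ and $W$, and the needed bound is a weighted estimate on the cylinder, whose convergence in $\sigma$ (together with $\bar{U}$ lying in the right space near $r=0$) is exactly where the restriction $\mu>\tfrac23$ enters. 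The crucial input is that $W$ is \emph{small}: although $F$ is only $O(1)$ in amplitude, $\Omega-\bar{\Omega}$ is, in physical variables, a superposition of pieces $r^{-1/\mu}\hat{F}_k(\sigma)e^{iNk(\theta-c_\mu r^{-1/\mu})}$ whose oscillation frequency is $N$ times a power of $1/r$ that diverges as $r\to0$, so that convolving with the Biot--Savart kernel produces strong cancellation and $W=\grad^\perp\Delta^{-1}(\Omega-\bar{\Omega})$ is bounded by a negative power of $N$ times $M$ in the relevant norm, uniformly down to the origin. Establishing this gain rigorously is, in my estimation, the main obstacle: the smoothing must be quantified in $N$ in a norm strong enough to close the transport estimate, uniformly as $r\to0$ where the spiral winds infinitely often.

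\emph{Fixed point.} With these two ingredients the map $\Phi\colon F\mapsto\tilde{F}$ — ``compute $W$ from $F$ by Biot--Savart, then solve the linear transport problem with source $W$'' — is a contraction on a small ball $\{\,\norm{F}\leq\varepsilon\,\}$ of the weighted cylinder space as soon as $N\geq N_0(\mu,M)$, because the $N$-gain of the previous step dominates both the fixed size $M$ of the data and the Lipschitz constants of the nonlinearity. Its fixed point is a profile $\Omega$ solving \eqref{eq:1.4} with $N$-fold symmetry. Along the way one must also check that the characteristics of the full drift $U-\mu x$, not merely of $\bar{V}$, still issue from $r=\infty$ — no trapped or recirculating orbits — which holds since the inward speed $\mu r$ dominates the smallness of $W$.

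\emph{Back to a weak solution.} Finally I would unscale, setting $\omega(t,x)=t^{-1}\Omega(t^{-\mu}x)$ and $u(t,x)=t^{-1+\mu}U(t^{-\mu}x)$. The homogeneous asymptotics of $\Omega$ give $\omega(t,\cdot)\to\omega_0=r^{-1/\mu}\mathring{\omega}(\theta)$ as $t\to0^+$; the bound $U\sim r^{1-1/\mu}$ near the origin together with the self-similar scaling yields $u\in L^2_{\mathrm{loc}}$; and the weak formulation of \eqref{eq:1.1} is then verified by testing against divergence-free $\phi\in C^\infty_c$, the only delicate point being the (integrable) singularity at the origin, which one excises with a cutoff and sends the cutoff to zero.
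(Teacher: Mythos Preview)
This theorem is not proved in the present paper; it is quoted from Elling \cite{Ell16}. The paper does, however, lay out Elling's framework in Section~\ref{sec:2}, so a comparison is possible.

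Your strategy differs from Elling's in a structural way. You fix coordinates tied to the \emph{background} characteristics (your $\phi=\theta-c_\mu r^{-1/\mu}$), take the vorticity perturbation $F$ as the unknown, and hope to extract the $N^{-1}$ gain from oscillatory cancellation in the Biot--Savart integral applied to $r^{-1/\mu}\hat F_k(\sigma)e^{iNk(\theta-c_\mu r^{-1/\mu})}$. Elling instead builds the coordinates on the \emph{true} characteristics of the full drift $U-\mu x$: the angular variable $\phi$ is, by definition, constant along them (equation~\eqref{eq:2.4}), so the transport equation is solved exactly, $\Omega=(\partial_\rho\Psi)^{-1/(2\mu)}\Gamma(\phi)$ (equation~\eqref{eq:2.8}), and all the nonlinearity is pushed into the Poisson equation $\Delta\Psi=\Omega$, written as a scalar equation \eqref{eq:2.9} for $f=\beta^{2\mu-1}\Psi$. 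The $N^{-1}$ gain then appears not through any oscillatory-integral argument but as the elementary operator bound $\|L_{n,\pm}^{-1}\|_{\mathcal C^\delta}\lesssim|n|^{-1}$ (Lemma~\ref{lem:3.6}) for the first-order factors of the linearization; under $N$-fold symmetry only modes $|n|\geq N$ occur, and the remainder of $\mathcal L_n$ is relatively bounded with relative bound $O(N^{-1})$ (see the Remark following \eqref{eq:3.2}).

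You yourself flag the Biot--Savart gain as ``the main obstacle,'' and it is a genuine gap in your outline. The Biot--Savart kernel is nonlocal and does not respect your unwinding variables; the cancellation you invoke would have to be extracted by a stationary-phase-type argument at every radial scale, in a norm that simultaneously closes the transport estimate and survives the infinitely many windings as $r\to0$. It is not clear this can be done in fixed background coordinates without essentially rediscovering the adapted-coordinate structure. Elling's device of absorbing the spiral into the unknown change of variables converts this analytic difficulty into an algebraic one about ODEs in $\beta$, which is the point of the construction.
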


This result was generalized by Shao-Wei-Zhang \cite{SWZ25}, who relaxed the symmetry assumption and extended the range of the parameter $\mu$.

\begin{theorem}[\cite{SWZ25}, Theorem 1.2] \label{thm:1.2}
Given $\mu > \frac{1}{2}$ and $N\in \mbb{Z}_{>1}$, there is an $\epsilon > 0$, independent of $N$, such that a weak solution $u$ of \eqref{eq:1.1} exists with initial data $\omega_0 = r^{-\frac{1}{\mu}} \mathring{\omega}(\theta)$, whenever $\mathring{\omega}$ is $N$-fold rotationally symmetric and
\[\norm{\mathring{\omega} - \hat{\mathring{\omega}}_0}_{L^1} \leq \epsilon N^{\frac{1}{2}} |\hat{\mathring{\omega}}_0|.\]
\end{theorem}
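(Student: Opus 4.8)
The plan is to construct the self-similar profile $\Omega$ directly, as a small $N$-fold symmetric perturbation of the radial background $\bar\Omega = C\,r^{-1/\mu}$ (with $C$ fixed by the constant angular mode $\hat{\mathring\omega}_0$), and then to check that $u(t,x)=t^{\mu-1}U(t^{-\mu}x)$ with $U=\nabla^\perp\Delta^{-1}\Omega$ is a weak solution of \eqref{eq:1.1} attaining the $(-1/\mu)$-homogeneous data. The first step is to recast the profile equation \eqref{eq:1.4} in coordinates adapted to the background spiral: in polar coordinates $(r,\theta)$ write $\Omega = r^{-1/\mu}g$, introduce the ``pseudo-time'' $\zeta = r^{-1/\mu}$ (so $\zeta = 0$ is spatial infinity, where the data lives, and $\zeta = +\infty$ is the origin) and the co-rotating angle $\phi = \theta - \tfrac{C}{2-1/\mu}\,\zeta$, along which the background characteristics are straight. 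A short computation then turns \eqref{eq:1.4} into a transport equation of the schematic form
\[
\zeta\,\partial_\zeta g \;+\; b[g]\,\partial_\phi g \;=\; c[g]\,g, \qquad g\big|_{\zeta = 0} = \mathring\omega(\phi),
\]
in which the background $\bar\Omega$ contributes nothing on the right (its homogeneity is exactly cancelled) and the coefficients $b[g],c[g]$ are built from the perturbation velocity $U[r^{-1/\mu}g]-\bar U$ via the Biot--Savart law. Crucially, $N$-fold symmetry of $\mathring\omega$ forces $g$ to be $\tfrac{2\pi}{N}$-periodic in $\phi$, so the fluctuation $g-\langle g\rangle_\phi$ carries only angular frequencies $|n|\ge N$.

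The construction is then a contraction-mapping argument for the profile on all of $\R^2$ (equivalently, for $g=g(\zeta,\phi)$): given $g$ in a ball whose radius is comparable to the right side of the hypothesis, solve the elliptic problem $\Delta\Psi = r^{-1/\mu}g$ on $\R^2$ with the appropriate homogeneity and decay to produce $U[g]$, freeze the coefficients $b[g],c[g]$, solve the resulting linear transport problem with data $\mathring\omega$, and show the map $g\mapsto\tilde g$ preserves the ball and contracts. The linear transport estimate is routine once the coefficients are in hand — one integrates along characteristics, which stay close to $\phi=\mathrm{const}$, picking up only a Gr\"onwall factor $\exp\!\big(\int |c[g]|\,\tfrac{d\zeta}{\zeta}\big)$ — so everything hinges on the Biot--Savart step. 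I would carry that out mode by mode in the angular variable: the $n$-th Fourier coefficient of $\Psi$ on a circle of radius $r$ solves $\Psi_n'' + r^{-1}\Psi_n' - n^2 r^{-2}\Psi_n = \Omega_n(r)$, and the Green's function gains $n^{-2}$ (the additional radial oscillation coming from the spiral twist only helps), so that $U_r = -r^{-1}\partial_\theta\Psi$ and $U_\theta = \partial_r\Psi$ gain a net factor at least $n^{-1}\le N^{-1}$ on the fluctuating part. This genuine inverse power of $N$ comfortably dominates the $N^{1/2}$ room in the hypothesis, so the contraction constant can be taken $\epsilon$ independent of $N$; the construction is carried out in weighted $L^1$- and $L^2$-type norms in the angular variable, uniformly in the pseudo-time, and I do not attempt to track the resulting precise power $N^{1/2}$, which is an artifact of those norms.

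The main obstacle is precisely this Biot--Savart estimate, carried out in the spiral geometry and uniformly down to the origin. One must show that solving $\Delta\Psi = r^{-1/\mu}g$ globally on $\R^2$ — with $g$ merely a bounded $\tfrac{2\pi}{N}$-periodic-in-$\phi$ fluctuation of a constant, and with $\phi$ itself twisting with $r$ through $\zeta=r^{-1/\mu}$ — yields a velocity perturbation that is both small in $N$ and controlled in a norm strong enough to close the transport estimate as $r\to 0$, where the spiral winds infinitely often and the co-rotating angle drifts (only logarithmically, which is harmless). One must also address the quasilinear nature of the transport equation, since $b[g]$ depends on $g$: the potentially derivative-losing term $b[g]\,\partial_\phi g$ is of size $\lesssim \epsilon N^{-1/2}$ and can be treated perturbatively against the diagonal principal part $\zeta\partial_\zeta$, so that a plain Banach fixed point should suffice in place of a Nash--Moser scheme.

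Finally, with $\Omega$ in hand one returns to the weak formulation. The ansatz \eqref{eq:1.3} solves \eqref{eq:1.1} classically away from the ray $\{x = 0\}$ precisely because $\Omega$ solves \eqref{eq:1.4}; near the origin $|\Omega|\lesssim r^{-1/\mu}$ and $|U|\lesssim r^{1-1/\mu}$, and $\mu>\tfrac12$ is exactly the range in which these give $\Omega\in L^1_{\mathrm{loc}}$ and $U\in L^2_{\mathrm{loc}}$, so the singular set is Lebesgue-null and contributes no boundary term in the distributional identity. Matching the initial data is then immediate from the scaling: for fixed $x\neq 0$, $t^{-1}\Omega(t^{-\mu}x)\to r^{-1/\mu}\mathring\omega(\theta)$ as $t\to 0^+$, because by construction $\Omega(y) = |y|^{-1/\mu}\mathring\omega(\arg y) + o(|y|^{-1/\mu})$ as $|y|\to\infty$. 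This yields the asserted weak solution.
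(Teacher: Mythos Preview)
First, a framing point: Theorem~1.2 is not proved in this paper---it is quoted from \cite{SWZ25} as prior work, and the present paper's Main Theorem generalizes it (dropping the $N$-fold symmetry, at the price of restricting to $\mu>1$). So there is no ``paper's own proof'' of this statement to compare against line by line. That said, both \cite{SWZ25} and this paper share an architecture that differs from yours in one decisive respect, and your sketch has a real gap at exactly that point.

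The approach actually used is: pass to Elling's adapted coordinates $(\beta,\phi)$ with $\beta\sim r^{-1/\mu}$ (your $\zeta$, essentially), but take the \emph{stream function} profile $f=\beta^{2\mu-1}\Psi$ as the unknown, not the vorticity. The transport equation is solved exactly along characteristics once, yielding $\Omega=(\partial_\rho\Psi)^{-1/(2\mu)}\Gamma(\phi)$, and what remains is a single fully nonlinear second-order equation $\mathcal F(\Gamma,f)=0$ for $f$, with analytic dependence on $f$ and its first and second derivatives and no further transport step. One linearizes $\mathcal F$ at the radial background, shows the linearization $\mathcal L$ is an isomorphism between carefully designed Banach spaces (injectivity by explicit Frobenius/hypergeometric analysis mode by mode, surjectivity via a Fredholm argument after splitting off a compact remainder), and closes with the implicit function theorem. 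The $N^{1/2}$ in the hypothesis emerges from the choice of norm on the angular data, not from a raw Biot--Savart count.

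Your route---keep the vorticity $g$ as unknown, write a quasilinear transport equation $\zeta\partial_\zeta g + b[g]\partial_\phi g = c[g]g$, and iterate by freezing coefficients---runs into a derivative-loss problem that you name but do not resolve. In the contraction step the difference of two iterates produces a term $(b[g_1]-b[g_2])\,\partial_\phi\tilde g$; the Biot--Savart law makes $b[g_1]-b[g_2]$ one angular derivative smoother than $g_1-g_2$, but $\partial_\phi\tilde g$ costs one, so in an $L^1_\phi$ or $L^2_\phi$ framework there is no net gain and no contraction, regardless of how small $\epsilon N^{-1/2}$ is in amplitude. Your sentence ``the potentially derivative-losing term $b[g]\partial_\phi g$ is of size $\lesssim\epsilon N^{-1/2}$ and can be treated perturbatively'' conflates smallness of a coefficient with smallness of an operator on the iteration space; as written, a plain Banach fixed point does not close. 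This is precisely why the stream-function formulation is used: it absorbs the transport into the change of variables, leaving an equation with no loss, so that a straightforward contraction (or implicit function theorem) applies. To make your vorticity-side scheme work you would need either a scale of spaces with a Nash--Moser iteration, or a norm that already contains the extra $\partial_\phi$ and for which you can show the Biot--Savart gain is uniform in the spiral geometry down to $r=0$; neither is supplied.
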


Our main result generalizes the above theorems on the parameter range $\mu > 1$, by completely removing the rotational symmetry assumption.

\begin{mainthm}
Given $\mu > 1$, there exists $\epsilon > 0$ such that for any $\mathring{\omega}\in L^1(\mbb{T} = \R/2\pi\Z)$ satisfying
\begin{equation} \label{eq:1.5}
\sum_{n\in \Z\setminus\{0\}} |n|^{-\frac{1}{2}} |\hat{\mathring{\omega}}_n| \leq \epsilon |\hat{\mathring{\omega}}_0|,
\end{equation}
there is a weak solution $u\in C([0,\infty);L^2_{\text{loc}})$ to \eqref{eq:1.1} and $\omega\in C([0,\infty); L^1_{\text{loc}})$ to \eqref{eq:1.2} where
\vspace{-.5em}\begin{itemize}
	\item (initial data) $\omega_0(x) = r^{-\frac{1}{\mu}} \mathring{\omega}(\theta)$, $u_0(x) = \grad^\perp \Delta^{-1} \omega_0$;
	\item (self-similar) $\omega(t,x) = t^{-1} \Omega(t^{-\mu} x)$ for $\Omega\in L^1_{\text{loc}}(\R^2)$, and $u(t,x) = t^{-1+\mu} U(t^{-\mu} x)$ for a continuous div-free vector field $U$.
\end{itemize}
\end{mainthm}
\noindent\textit{Remark 1}. The initial stream function $\psi_0 = \Delta^{-1} \omega_0$ could be found via Fourier series. Since $2 - \frac{1}{\mu}$ is never an integer, the equation
\[\Big(2 - \frac{1}{\mu}\Big)^2 \mathring{\psi} + \del_\theta^2 \mathring{\psi} = \mathring{\omega},\]
has a unique solution $\mathring{\psi}$. Setting $\psi_0 = r^{2 - \frac{1}{\mu}} \mathring{\psi}(\theta)$ yields $\Delta \psi_0 = \omega_0$.

\noindent\textit{Remark 2}. Our choice of space for $\mathring{\omega}$ is motivated by the Wiener algebra setting used in \cite{Ell16}. Similar results with assumption \eqref{eq:1.5}, but still with $m$-fold rotational symmetry with large $m$, were obtained in \cite{J23pre}. The novelty of our result lies in eliminating this symmetry via improved linear analysis.

\noindent\textit{Remark 3}. When $N$-fold rotational symmetric data is considered, our result recovers that of \cite{SWZ25} (see Theorem \ref{thm:1.2} above), in a different smallness condition:
\[\sum_{k\neq 0} |k|^{-\frac{1}{2}} |\hat{\mathring{\omega}}_{kN}| \leq \epsilon N^{\frac{1}{2}} |\hat{\mathring{\omega}}_0|.\]
Since the smallness parameter $\epsilon > 0$ is independent of $N$, taking $N$ large can serve as a substitute for the smallness assumption, consistent with the approach in \cite{Ell16}.

\noindent\textit{Remark 4}. If $\mathring{\omega}\in L^p$ for some $p$, then $\omega\in C([0,\infty); L^q_{\text{loc}})$ for $q \in [1,p] \cap [1,2\mu)$. Thus, for $p\geq 2$, $\omega$ is a renormalized solution in the sense of DiPerna-Lions \cite{DL89}. Furthermore, the assumption $\mathring{\omega} \in L^1$ can be relaxed to $\mathring{\omega}$ being a bounded measure, via mollification (see Section 8 of \cite{SWZ25} and Corollary~\ref{cor:5.11}).

\noindent\textit{Remark 5}. The norm
\begin{equation} \label{eq:1.6}
\norm{\mathring{\omega}}_{\mcal{A}^{-\frac{1}{2}}} = \sum_{n\in\Z} \langle n \rangle^{-\frac{1}{2}} |\hat{\mathring{\omega}}_n| < \infty
\end{equation}
is fairly independent to $\mathring{\omega}\in L^1$ or any $L^p$. For instance, there exists $\mathring{\omega}\in L^1$ that satisfies \eqref{eq:1.6} but $\mathring{\omega}\not\in L^p$ for any $p > 1$. Moreover, there exists a bounded Radon measure $\mathring{\omega}$ that is not represented by a $L^1$ function but satisfies \eqref{eq:1.6} (see \cite{Zyg32} for a lacunary trigonometry series example).

Self-similar spiral solutions without rotational symmetry were also constructed for the generalized SQG equations by García and Gómez-Serrano \cite{GG24}, where
\[ \left\vert \begin{array}{l} \del_t \omega + u\cdot \grad \omega = 0, \\ u = -\grad^\perp (-\Delta)^{-1 + \frac{\gamma}{2}} \omega. \end{array} \right. \]
To ensure the integrability of the nonlocal operator $(-\Delta)^{-1 + \frac{\gamma}{2}} \omega$, they require $\frac{1}{2} < \mu < \frac{1}{2 - \gamma}$. This range excludes the Euler case $\gamma = 0$. In contrast, our focus on the regime $\mu > 1$ for the 2-D Euler explores a different region of the parameters. Extending our approach to the surface quasi-geostrophic (SQG) equations would be an interesting direction for future work.

\subsection{Non-uniqueness in incompressible fluid equations} \label{subsec:1.2}

The non-uniqueness of weak solutions of the Euler equations \eqref{eq:1.1} has been an active field of study. The first explicit examples of non-unique solutions were constructed by Scheffer and Shnirelman \cite{Sch93, Shn97}. A decisive breakthrough came with the introduction of convex-integration techniques by De Lellis and Székelyhidi \cite{DS09}, which later powered the celebrated proofs of Onsager’s conjecture \cite{Ise18, GR24}.

Since Yudovich's fundamental work \cite{Yud63, Yud95}, it has remained an open question whether uniqueness continues to hold for the vorticity that merely lies in $L^p$ for $p<\infty$. Bru\'e and Colombo \cite{BC23} showed non-uniqueness in the space $\omega\in L^\infty_t L^{1,\infty}_x$. More recently, the same authors, together with Kumar \cite{BCK24}, introduced a novel convex integration scheme to establish non-uniqueness in $\omega\in L^\infty_t L^p_x$ for some $p >1$, close to $1$.

A complementary line of research considers the forced vorticity equation. In the groundbreaking work \cite{Visa, Visb}, Vishik proved non-uniqueness by exploiting unstable self-similar profiles. These ideas have since been streamlined and simplified in a series of works, notably the book \cite{ABCDGJK24} and papers \cite{DM25, CFMS25}.

\subsubsection{Bressan-Shen numerics}

For vorticities in $L^p$ with $p\geq \frac{3}{2}$, the two-dimensional Euler flow conserves kinetic energy \cite{CLNS16}; when $p\geq 2$, DiPerna–Lions theory guarantees the `renormalised' transport structure \cite{DL89}. Hence, any definitive non-uniqueness mechanism in the regime $p\geq 2$ must therefore respect the underlying transport equation.

Bressan and Shen carried out numerical experiments suggesting that the same initial data might evolve into two distinct self-similar solutions: one forming a single algebraic spiral with two wings, and the other splitting into a pair of spirals, each with a single wing. Our analytical results support the plausibility of the single-wing spiral. See \cite{BM20, BS21, Shen23} for more explanation and codes.

\subsubsection{Non-uniqueness in 3-D incompressible Navier-Stokes equation}

A similar program pursues the non-uniqueness of Leray–Hopf weak solutions to the 3-D incompressible Navier-Stokes equation via self-similar solutions. Jia and Šverák \cite{JS14} produced self-similar solutions for all $-1$-homogeneous initial data, and later showed that a bifurcation arises when the linearised operator around such a profile has a simple kernel \cite{JS15}. Numerical evidence for this scenario can be found in \cite{GS23}. On the other hand, Albritton, Bru\'e, and Colombo \cite{ABC22} achieved non-uniqueness for the forced 3-D Navier–Stokes equations, further developing Vishik’s self-similar instability in this setting.

\subsubsection{Rolling up and bifurcation in vortex sheet dynamics}

A striking feature of the self-similar spirals constructed in \cite{Ell13, Ell16, SWZ25}, and in the present paper, is that the spiral is absent at the initial time but emerges instantaneously afterward, a phenomenon reminiscent of vortex-sheet `roll-up'. This behaviour has long been observed both analytically and numerically in vortex-sheet dynamics \cite{SS51, Moo74, Moo75}. It stands in contrast to the logarithmic spiral solutions \cite{JS24}, in which the spiral is already present at the initial time.

Pullin’s numerical simulations \cite{Pul78, Pul89} revealed a bifurcation in which a single sheet, sheared in opposite directions on its two sides, splits into two branches that roll up separately. Similar numerical evidence was reported in \cite{LLNZ06}. Recently, Shao, Wei, and Zhang \cite{SWZ26} analytically constructed vortex sheet roll-up with $m$-fold rotational symmetry, where $m$ is sufficiently large.

\section{Self-similar Equations and Adapted Coordinates} \label{sec:2}

We study the 2-D incompressible Euler equations in the vorticity form \eqref{eq:1.2}. Given an initial data
\begin{equation} \label{eq:2.1}
\omega_0(r,\theta) = r^{-\frac{1}{\mu}} \mathring{\omega}(\theta),
\end{equation}
we seek a self-similar solution of the form
\begin{equation} \label{eq:2.2}
\omega(t,x) = t^{-1} \Omega(t^{-\mu} x), \quad u(t,x) = t^{-1+\mu} U(t^{-\mu}x), \quad \psi(t,x) = t^{-1+2\mu} \Psi(t^{-\mu}x).
\end{equation}
Then, the self-similar profile $\Omega$ should satisfy:
\begin{equation} \label{eq:2.3} \left\vert \begin{array}{rcl}
	(\grad^\perp \Psi - \mu x) \cdot \grad \Omega &=& \Omega, \\
	\Delta\Psi &=& \Omega, \\
	\displaystyle\lim_{r\to \infty} r^{\frac{1}{\mu}} \Omega(r,\theta) &=& \mathring{\omega}(\theta).
\end{array} \right. \end{equation}

We seek a solution to \eqref{eq:2.3}, using the adapted coordinates, which were introduced by Elling \cite{Ell13, Ell16}. Assume that there is a change of coordinates $(r,\theta) \to (\beta,\phi) \in \R_+ \times \mbb{T}$ such that
\vspace{-.5em}\begin{itemize}
	\item $\theta \equiv \beta + \phi \,(\text{mod} \,2\pi)$
	\item $\phi$ is constant on the characteristic curve of $\grad^\perp \Psi - \mu x$, i.e.,
	\begin{equation} \label{eq:2.4} (\grad^\perp \Psi - \mu x) \cdot \grad \phi = 0. \end{equation}
	\item For any $\theta\in \mbb{T}$, $r(\beta,\theta - \beta)$ is decreasing in $\beta$, $\displaystyle \lim_{\beta \to \infty} r(\beta,\theta - \beta) = 0$ and $\displaystyle \lim_{\beta \to 0+} r(\beta,\theta - \beta) = \infty$.
\end{itemize}
This assumption is equivalent to requiring that the characteristic curves originating from $r=\infty$ fill all of $\R^2\setminus\{0\}$ without intersecting one another. However, this property may fail for certain self-similar solutions $\Omega$ to the equation \eqref{eq:2.3}. In section \ref{subsec:2.1}, we examine the radial solution $\bar{\Omega} = Cr^{-\frac{1}{\mu}}$, which generates algebraic spiral trajectories of the form $\beta = \frac{C}{2 - \mu^{-1}}r^{-\frac{1}{\mu}}$. A key challenge is to construct perturbations of this solution that preserve the non-intersecting spiral structure. From equation \eqref{eq:2.4}, we observe that
\begin{align*}
0 &= (\grad^\perp \Psi - \mu x) \cdot \grad \phi \\
&= \frac{1}{r} \del_r \Psi \del_\theta \phi - \frac{1}{r} \del_\theta \Psi \del_r \phi - \mu r \del_r \phi \\
&= \frac{1}{r} (\del_r \beta \del_\theta \phi - \del_\theta \beta \del_r \phi) \del_\beta \Psi - \mu r \del_r \phi.
\end{align*}
Since
\[\begin{pmatrix} \del_r \beta & \del_\theta \beta \\ \del_r \phi & \del_\theta \phi \end{pmatrix}^{-1} = \begin{pmatrix} \del_\beta r & \del_\phi r \\ \del_\beta \theta & \del_\phi \theta \end{pmatrix} = \begin{pmatrix} * & * \\ 1 & 1 \end{pmatrix},\]
we have
\[\del_r \beta \del_\theta \phi - \del_\theta \beta \del_r \phi = - \del_r \phi.\]
Hence,
\begin{equation} \label{eq:2.5}
\del_\beta \Psi = -\mu r^2.
\end{equation}
For convenience, we write
\begin{equation*}
\del_\rho = \del_\phi - \del_\beta, \quad D_\beta = \beta \del_\beta, \quad D_\rho = \beta \del_\rho.
\end{equation*}
Since $\frac{d}{d\beta}f(\beta, \theta - \beta) = - (\del_\rho f)(\beta, \theta - \beta)$, the operator $\del_\rho$ is a differential in the radial direction. Note that
\begin{equation} \label{eq:2.6}
r\del_r =r \del_r \phi \del_\phi + r\del_r \beta \del_\beta = \frac{r}{\del_\rho r} \del_\rho = \frac{2\del_\beta\Psi}{\del_{\rho\beta} \Psi} \del_\rho,
\end{equation}
and
\begin{equation} \label{eq:2.7}
\del_\theta = \del_\theta \phi \del_\phi + \del_\theta \beta \del_\beta = \frac{-\del_\beta r \del_\phi + \del_\phi r \del_\beta}{\del_\rho r} = \del_\phi - \frac{\del_\phi r}{\del_\rho r} \del_\rho = \del_\phi - \frac{\del_{\phi\beta} \Psi}{\del_{\rho\beta}\Psi} \del_\rho.
\end{equation}
We can solve the self-similar Euler equation along the characteristic curve:
\begin{align*}
\Omega &= \{(\grad^\perp \Psi - \mu x) \cdot \grad \beta \} \del_\beta \Omega \\
&= \Big\{\frac{1}{r} (-\del_\theta \Psi \del_r \beta + \del_r \Psi \del_\theta \beta) - \mu r \del_r \beta \Big\} \del_\beta \Omega = -2\mu\frac{\del_\rho\Psi}{\del_{\rho\beta} \Psi} \del_\beta \Omega.
\end{align*}
Hence,
\[\frac{\del}{\del\beta} \Big( \Omega (\del_\rho \Psi)^{\frac{1}{2\mu}} \Big) = 0.\]
And thus,
\begin{equation} \label{eq:2.8}
\Omega = (\del_\rho\Psi)^{-\frac{1}{2\mu}} \Gamma(\phi).
\end{equation}
Combining \eqref{eq:2.5} and \eqref{eq:2.8}, we can rewrite Poisson’s equation $\Delta \Psi = \Omega$ in the form
\[(r\del_r)^2 \Psi + \del_{\theta}^2 \Psi = r^2 \Omega = -\frac{1}{\mu} \del_\beta \Psi (\del_\rho \Psi)^{-\frac{1}{2\mu}} \Gamma(\phi).\]
We now perform a change of variables according to \eqref{eq:2.6} and \eqref{eq:2.7}. In the new coordinates, the equation becomes:
\begin{align*}
0 &= 2\mu \del_\rho(r\del_r \Psi) + 2\mu \bigg( \frac{\del_{\rho\beta} \Psi}{2\del_\beta \Psi} \del_\phi - \frac{\del_{\phi\beta} \Psi}{2\del_\beta \Psi} \del_\rho \bigg) (\del_\theta \Psi) + \del_{\rho\beta} \Psi (\del_\rho \Psi)^{-\frac{1}{2\mu}} \Gamma(\phi) \\
&= \mu \del_\rho \bigg(2 r\del_r \Psi - \frac{\del_{\phi\beta} \Psi}{\del_\beta \Psi} \del_\theta \Psi \bigg) + \mu \del_\phi \bigg( \frac{\del_{\rho\beta} \Psi}{\del_\beta \Psi} \del_\theta \Psi \bigg) + \del_{\rho\beta} \Psi (\del_\rho \Psi)^{-\frac{1}{2\mu}} \Gamma(\phi)
\end{align*}
Define $f \triangleq \beta^{2\mu-1} \Psi$ and $h \triangleq (-D_\beta + 2\mu - 1)f$. Then,
\begin{align*}
  \del_\beta \Psi &= -\beta^{-2\mu} h, & \del_\phi \Psi &= \beta^{-2\mu + 1} \del_\phi f, & \del_\rho \Psi &= \beta^{-2\mu} (D_\rho + 2\mu - 1)f,\\
  \del_{\phi\beta} \Psi &= -\beta^{-2\mu} \del_\phi h, & \del_{\rho\beta} \Psi &= -\beta^{-2\mu-1} (D_\rho + 2\mu) h, & &
\end{align*}
and
\begin{align*}
  r\del_r \Psi &= 2 \beta^{-2\mu + 1} \frac{h (D_\rho + 2\mu - 1)f}{(D_\rho + 2\mu)h},\\
  \del_\theta \Psi &= \beta^{-2\mu + 1} \bigg(\del_\phi f - \frac{\del_\phi h (D_\rho + 2\mu - 1)f}{(D_\rho + 2\mu)h} \bigg).
\end{align*}
Therefore, in the $(\beta, \phi)$ coordinate system and in terms of the unknown function $f$, Poisson’s equation takes the form
\begin{equation} \label{eq:2.9}
\mcal{F}(\Gamma, f) \triangleq \mu (D_\rho + 2\mu - 1) \mcal{N}_1(f) + \mu\del_\phi \mcal{N}_2 (f) - \mcal{N}_3(f) \Gamma(\phi) = 0,
\end{equation}
where
\begin{align} \label{eq:2.10} \begin{split}
  \mcal{N}_1(f) &= \frac{4h \cdot (D_\rho + 2\mu - 1)f}{(D_\rho + 2\mu)h} - \frac{\del_\phi h}{h} \bigg( \del_\phi f - \frac{\del_\phi h \cdot (D_\rho + 2\mu - 1)f}{(D_\rho + 2\mu)h} \bigg) \\
  &= \bigg( 4h - \frac{(\del_\phi h)^2}{h} \bigg) \frac{(D_\rho + 2\mu - 1)f}{(D_\rho + 2\mu)h} - \frac{\del_\phi h \del_\phi f}{h} ,\\
  \mcal{N}_2(f) &= \frac{(D_\rho + 2\mu)h}{h} \bigg( \del_\phi f - \frac{\del_\phi h \cdot (D_\rho + 2\mu - 1)f}{(D_\rho + 2\mu)h} \bigg)\\
  &= \frac{(D_\rho + 2\mu)h}{h} \del_\phi f - \frac{\del_\phi h}{h} (D_\rho + 2\mu - 1)f,\\
  \mcal{N}_3(f) &= (D_\rho + 2\mu) h \cdot \big( (D_\rho + 2\mu - 1)f \big)^{-\frac{1}{2\mu}}.
\end{split} \end{align}

\subsection{Radial solution in adapted coordinates} \label{subsec:2.1}
A radial profile $\omega(t,x) = c_0 |x|^{-\frac{1}{\mu}}$ is a stationary solution for the two-dimensional Euler equation. When $\mu \neq 1$,
\[\Omega = c_0 r^{-\frac{1}{\mu}}, \qquad \Psi = c_0 \Big(2 - \frac{1}{\mu}\Big)^{-2} r^{2 - \frac{1}{\mu}}.\]
The characteristic ODE for the vector field $\grad^\perp \Psi - \mu x$ is:
\begin{equation*} \left\vert \begin{array}{rcl}
\dot{r}(s) & = & -\mu r, \\ 
\dot{\theta}(s) &=& c_0 \Big(2 - \frac{1}{\mu} \Big)^{-1} r^{-\frac{1}{\mu}}.
\end{array} \right. \end{equation*}
Solving these gives
\[\beta = c_0 \Big( 2 - \frac{1}{\mu} \Big)^{-1} r^{-\frac{1}{\mu}}.\]
And thus
\begin{align*}
\Psi &= c_0^{2\mu} \Big(2 - \frac{1}{\mu} \Big)^{-2\mu - 1} \beta^{1-2\mu},\\
\Omega &= \Big( 2 - \frac{1}{\mu} \Big) \beta.
\end{align*}
Since $\Omega = \Gamma (\del_\rho\Psi)^{-\frac{1}{2\mu}}$,
\[\Gamma = c_0 \mu^{\frac{1}{2\mu}}.\]
We set
\[c_0 = \Big( 2 - \frac{1}{\mu} \Big) \mu^{-\frac{1}{2\mu}}.\]
Summing up,
\begin{equation*}
\Psi_0 = \frac{1}{2\mu - 1} \beta^{1-2\mu}, \qquad \Gamma_0(\phi) = 2 - \frac{1}{\mu},
\end{equation*}
solves $\Delta \Psi_0 = \Gamma_0 (\del_\rho \Psi_0)^{-\frac{1}{\mu}}$. And thus, $(\Gamma_0, f_0) = (2 - \frac{1}{\mu}, \frac{1}{2\mu - 1})$ satisfies $\mcal{F}(\Gamma_0, f_0) = 0$.

\subsection{Organization of the paper}
We begin by analyzing equations \eqref{eq:2.9} and \eqref{eq:2.10} in the adapted coordinates $(\beta,\phi)$. The following theorem establishes the existence of solutions in a suitable functional framework. The precise definitions of the function spaces $X$, $Y$, and $Z$ are provided in section~\ref{sec:3}.

\begin{theorem} \label{thm:2.1}
There exist a sufficiently small $\epsilon > 0$ and a neighborhood $N$ of $f_0$ in $Y$, such that for any $\norm{\Gamma - \Gamma_0}_{X} < \epsilon$, there exists a unique solution $f\in N$ to the equations \eqref{eq:2.9} and \eqref{eq:2.10}. Moreover,
  \[\norm{f - f_0}_{Y} \leq C \norm{\Gamma - \Gamma_0}_X.\]
\end{theorem}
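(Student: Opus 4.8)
The plan is to prove Theorem~\ref{thm:2.1} by the implicit function theorem (equivalently, a quantitative Newton/contraction argument) centered at the explicit radial solution $(\Gamma_0,f_0)=(2-\frac{1}{\mu},\frac{1}{2\mu-1})$ of Section~\ref{subsec:2.1}, for which $\mcal{F}(\Gamma_0,f_0)=0$. The first task is to check that $\mcal{F}$, defined by \eqref{eq:2.9}--\eqref{eq:2.10}, is a $C^1$ (indeed real-analytic) map from a ball $\{\norm{\Gamma-\Gamma_0}_X<\delta\}\times\{\norm{f-f_0}_Y<\delta\}$ into $Z$. Two things must be arranged by the choice of spaces in Section~\ref{sec:3}: (i) the differential operators $D_\beta$, $D_\rho=\beta(\del_\phi-\del_\beta)$ and $\del_\phi$ occurring in $\mcal{N}_1,\mcal{N}_2,\mcal{N}_3$ and in \eqref{eq:2.9} must map $Y$ boundedly into $Z$ — so that $\mcal{F}$ loses exactly the admissible number of derivatives — and $Y$ must be a Banach algebra so the products in \eqref{eq:2.10} are controlled; (ii) the quantities appearing in a denominator or under a negative power, namely $h=(-D_\beta+2\mu-1)f$, $(D_\rho+2\mu)h$ and $(D_\rho+2\mu-1)f$, take the nonzero values $1$, $2\mu$ and $1$ at $f_0$, hence stay uniformly bounded away from $0$ on the small ball (using $Y\hookrightarrow C^0$ or the algebra property), so that $f\mapsto 1/h$, $f\mapsto 1/((D_\rho+2\mu)h)$ and $f\mapsto ((D_\rho+2\mu-1)f)^{-1/(2\mu)}$ are smooth there. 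In particular $\del_\Gamma\mcal{F}(\Gamma_0,f_0)=-\mcal{N}_3(f_0)(\,\cdot\,)=-2\mu(\,\cdot\,)$ is obviously bounded $X\to Z$.

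Next I would compute the linearization $L:=D_f\mcal{F}(\Gamma_0,f_0)\colon Y\to Z$. Since $f_0$ is constant in both $\beta$ and $\phi$, every term in \eqref{eq:2.10} carrying a factor $\del_\phi f_0$, $\del_\phi h_0$ or $D_\rho f_0$ drops, and a short computation gives $\mcal{N}_1(f_0)=2/\mu$, $\mcal{N}_2(f_0)=0$, $\mcal{N}_3(f_0)=2\mu$ (consistent with $\mcal{F}(\Gamma_0,f_0)=0$) together with an explicit formula for $L$ as a linear operator with $\phi$-independent, $\beta$-homogeneous coefficients, once $h$ is replaced by $(-D_\beta+2\mu-1)$ acting on the perturbation. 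The key step, and the one I expect to be \emph{the main obstacle}, is to show that $L$ is an isomorphism $Y\to Z$ with bound independent of the data. I would diagonalize in $\phi$ via Fourier series, reducing $L$ on the $n$-th mode to a linear ODE in $\beta$ with a regular singular point at $\beta=0$ and a matching behaviour at $\beta=\infty$; one then solves it by prescribing regularity as $\beta\to 0^+$ (the asymptotic normalization $r^{1/\mu}\Omega\to\mathring{\omega}$) and decay as $\beta\to\infty$. Because $D_\rho=\beta\del_\phi-D_\beta$ couples the $\beta$-scaling with the mode $n$, the indicial analysis at $\beta=0$ involves the full symbol in $n$, and here one uses that $2-\frac{1}{\mu}\notin\Z$ (cf. Remark~1) together with the analogous nonresonance conditions to invert the indicial/symbol factors for \emph{every} $n\in\Z$. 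The novelty over \cite{Ell16,SWZ25} is precisely that all modes, including $n=\pm1$, must be treated, with the per-mode bounds summing against the weights that define $X$, $Y$, $Z$ (and appear in \eqref{eq:1.5}); this is the ``improved linear analysis'' of Remark~2.

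With these two ingredients the rest is routine. I would rewrite \eqref{eq:2.9} as $L(f-f_0)=\mcal{R}(\Gamma,f)$, where $\mcal{R}(\Gamma,f):=L(f-f_0)-\mcal{F}(\Gamma,f)$ collects the data contribution $-\mcal{N}_3(f_0)(\Gamma-\Gamma_0)=-2\mu(\Gamma-\Gamma_0)$ together with the purely quadratic-and-higher Taylor remainder of $\mcal{F}$ in $f-f_0$; by Step~1 this map satisfies $\mcal{R}(\Gamma_0,f_0)=0$, maps the small ball into $Z$, is Lipschitz there, and has Lipschitz constant in $f$ of size $O(\delta)$ plus $O(\norm{\Gamma-\Gamma_0}_X)$. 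Then $\mcal{T}(f):=f_0+L^{-1}\mcal{R}(\Gamma,f)$ is a contraction on a small ball of $Y$ once $\delta$ and $\epsilon$ are chosen small relative to $\norm{L^{-1}}$, so it has a unique fixed point $f\in Y$; uniqueness in the ball and the estimate $\norm{f-f_0}_Y\le\norm{L^{-1}}\,\norm{\mcal{R}(\Gamma,f)}_Z\le C\norm{\Gamma-\Gamma_0}_X$ follow at once. (Alternatively, once $\mcal{F}\in C^1$ and $D_f\mcal{F}(\Gamma_0,f_0)$ is invertible have been established, one may simply invoke the implicit function theorem.)
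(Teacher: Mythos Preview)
Your overall architecture matches the paper's exactly: rewrite $\mcal{F}(\Gamma_0+\Gamma,f_0+f)=0$ as the fixed-point problem $f=\mu\mcal{L}^{-1}\big(\mcal{N}_3(f_0+f)\Gamma-\mcal{N}(f)\big)$ and run a contraction in $Y$, using the algebra properties of $V,\tilde V$ (not $Y$ itself, which is not an algebra) to control the nonlinearities $\mcal{N}_i$. The contraction step is routine once $\mcal{L}^{-1}:Z\to Y$ is bounded, and the paper carries it out essentially as you describe (Section~\ref{subsec:5.1}, Lemma~\ref{lem:5.2}, Lemma~\ref{lem:5.3}).

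Where your sketch diverges substantively is the invertibility of $\mcal{L}$, which you correctly flag as the main obstacle but whose mechanism you misidentify. The per-mode equation is
\[(\hat D_\rho+2\mu-1)^2\hat g_n-n^2\mu^2\hat g_n+(2\mu-1)\,\textrm{i}n\beta\,\hat f_n=\hat w_n,\qquad \hat D_\rho=-D_\beta+\textrm{i}n\beta,\]
and while $\beta=0$ is a regular singular point, $\beta=\infty$ is \emph{irregular} because of the $\textrm{i}n\beta$ terms; one cannot simply match Frobenius expansions at both ends. The paper proceeds differently. Injectivity is obtained by computing the kernel asymptotics explicitly via generalized hypergeometric functions (Lemma~\ref{lem:4.3} and the ${}_2F_2$ formula), and the decisive condition is $\mu>1$, not $2-\tfrac{1}{\mu}\notin\Z$: on the $|n|=1$ mode the candidate bounded-at-zero solutions grow like $\beta^{3\mu-3}$ and $\beta^{\mu-1}$ at infinity, and both exponents are positive only when $\mu>1$ (for $\tfrac12<\mu<1$ the kernel of $\mcal{L}$ is genuinely nontrivial; see the remark after Proposition~\ref{prop:4.2}). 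Surjectivity and the uniform-in-$n$ bounds are then obtained by a Fredholm argument rather than by direct ODE solution: the spaces $\tilde Y,Z$ are \emph{defined} so that the main part $L_+L_-$ is an isometry $\tilde Y\to Z$, and the remainder $\mcal{K}_n$ is shown to be compact on $\mcal{C}^\delta_0$ with $\norm{\mcal{K}_n}\lesssim|n|^{-1}$ (Lemmas~\ref{lem:4.7}--\ref{lem:4.8}). Your ``invert the indicial/symbol factors for every $n$'' does not capture this structure and would not, on its own, deliver the uniform bounds needed to sum over $n$ against the $\mcal{A}^{\pm 1/2}$ weights.
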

We then translate this result back to the original self-similar variables, yielding the main theorem of the paper:
\begin{theorem} \label{thm:2.2}
There exists a sufficiently small $\epsilon > 0$, such that for any $\mathring{\omega}\in L^1$ with $\norm{\mathring{\omega} - \hat{\mathring{\omega}}_0}_{X} < \epsilon |\mathring{\omega}_0|$, there exists a function $\Psi \in C^1_{\text{loc}}(\R^2)$ such that $U = \grad^\perp \Psi$ and $\Omega = \Delta\Psi \in L^1_{\text{loc}}(\R^2)$ satisfy the followings:
  
\noindent Let $\omega$, $u$, and $\psi$ be defined by \eqref{eq:2.2} and let $\mathring{\psi}(\theta)$ be the unique solution to $((2 - \frac{1}{\mu})^2 + \del_\theta^2) \mathring{\psi} = \mathring{\omega}$. Define the initial data by
\[\omega_0(r,\theta) = r^{-\frac{1}{\mu}} \mathring{\omega}(\theta), ~\psi_0 = r^{2 - \frac{1}{\mu}} \mathring{\psi}(\theta), ~u_0 = \grad^\perp \psi_0.\]
Then, the velocity field $u \in C([0,\infty); L^2_{\text{loc}}(\R^2))$ is a weak solution of the incompressible Euler equations \eqref{eq:1.1}, and the vorticity $\omega\in C([0,\infty); L^1_{\text{loc}}(\R^2))$ is a weak solution of the vorticity equation \eqref{eq:1.2}.
\end{theorem}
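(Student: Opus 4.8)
The plan is to run the reduction of section~\ref{sec:2} in reverse: Theorem~\ref{thm:2.1} supplies the profile $f\in Y$, from which I reconstruct the stream function, the adapted coordinates and the vorticity, and I then transfer regularity and the two weak formulations back to the physical variables. First I identify the data. Comparing the boundary condition $\lim_{r\to\infty}r^{1/\mu}\Omega=\mathring{\omega}$ in \eqref{eq:2.3} with \eqref{eq:2.8} and the radial computation of section~\ref{subsec:2.1} (using $r^2=-\mu^{-1}\del_\beta\Psi$ and $\del_\rho\Psi\sim\mu r^2$ as $\beta\to0+$), one sees that $\Gamma$ is determined by $\mathring{\omega}$, up to the normalization of section~\ref{subsec:2.1} fixed by the scaling symmetry $\Omega(\cdot)\mapsto\Omega(\lambda\,\cdot)$; moreover the hypothesis $\norm{\mathring{\omega}-\hat{\mathring{\omega}}_0}_X<\epsilon$ becomes $\norm{\Gamma-\Gamma_0}_X<\epsilon'$, so Theorem~\ref{thm:2.1} produces a unique $f\in Y$ with $\norm{f-f_0}_Y\lesssim\norm{\mathring{\omega}-\hat{\mathring{\omega}}_0}_X$. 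I then set $\Psi=\beta^{1-2\mu}f$, $h=(-D_\beta+2\mu-1)f$, recover the radial variable $r^2=\mu^{-1}\beta^{-2\mu}h$ from \eqref{eq:2.5}, keep $\theta\equiv\beta+\phi$, and define $\Omega=(\del_\rho\Psi)^{-1/(2\mu)}\Gamma(\phi)$.

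The first substantial step is to show that $(\beta,\phi)\mapsto(r,\theta)$ is a $C^1$ diffeomorphism of $\R_+\times\mbb{T}$ onto $\R^2\setminus\{0\}$ with the monotonicity and limit properties listed in section~\ref{sec:2}. Since $f$ is $Y$-close to $f_0=\tfrac1{2\mu-1}$, the quantities $h$, $(D_\rho+2\mu-1)f$ and $(D_\rho+2\mu)h$ are close in the appropriate weighted $L^\infty$ norms to $1$, $1$ and $2\mu$, hence strictly positive: positivity of $(D_\rho+2\mu-1)f=\beta^{2\mu}\del_\rho\Psi$ makes $\Omega$ well defined, positivity of $h$ makes $r$ well defined, and positivity of $(D_\rho+2\mu)h=-\beta^{2\mu+1}\del_{\rho\beta}\Psi$ gives $\del_\rho r>0$, so the Jacobian $-\del_\rho r$ never vanishes and, for each fixed $\theta$, $\beta\mapsto r(\beta,\theta-\beta)$ is strictly decreasing. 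The $\beta$-weights built into $Y$ ensure $r^2=\mu^{-1}\beta^{-2\mu}h\to\infty$ as $\beta\to0+$ and $\to0$ as $\beta\to\infty$, so the map is proper; combined with the local-diffeomorphism property this yields, by a monotonicity/degree argument, global bijectivity onto $\R^2\setminus\{0\}$. Pinning down the norms of $X$ and $Y$ so that all of these perturbative inequalities are strict is where most of the work in this theorem lies.

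Next I handle regularity. As $\beta\to\infty$ (near the origin) the estimates give $\Psi\sim\tfrac1{2\mu-1}\beta^{1-2\mu}$ and $r^2\sim\mu^{-1}\beta^{-2\mu}$, so $\Psi=O(r^{2-1/\mu})$ with $\grad\Psi=O(r^{1-1/\mu})$; since $\mu>1$ gives $1-\tfrac1\mu\in(0,1)$, $\Psi$ extends across $0$ to an element of $C^1_{\text{loc}}(\R^2)$ with $\grad\Psi(0)=0$, so $U=\grad^\perp\Psi$ is continuous and divergence-free, while $\Omega\sim r^{-1/\mu}$ near $0$ lies in $L^1_{\text{loc}}(\R^2)$ because $1/\mu<2$. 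On $\R^2\setminus\{0\}$ the identity $\mcal{F}(\Gamma,f)=0$ is, by the computations of section~\ref{sec:2}, exactly $\Delta\Psi=\Omega$ in the classical sense, and the characteristic computation before \eqref{eq:2.8} gives $(\grad^\perp\Psi-\mu x)\cdot\grad\Omega=\Omega$ there; since $\Psi\in C^1$ and $\Omega\in L^1_{\text{loc}}$, the isolated point $\{0\}$ is removable and $\Delta\Psi=\Omega$ holds distributionally on all of $\R^2$. The behaviour as $\beta\to0+$ matches $\Psi$ with $r^{2-1/\mu}\mathring{\psi}(\theta)$ and $\Omega$ with $r^{-1/\mu}\mathring{\omega}(\theta)$, identifying $\psi_0$, $u_0$ and $\omega_0$.

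Finally I verify the weak formulations. For $t>0$ the fields $\omega(t,\cdot)$ and $u(t,\cdot)$ from \eqref{eq:2.2} are fixed rescalings of $\Omega$ and $U$, and the self-similar identities above show they solve \eqref{eq:1.1} and \eqref{eq:1.2} classically on $\R^2\setminus\{0\}$; testing against a divergence-free $\phi\in C^\infty_c$ with a radial cutoff $\chi(x/\delta)$ removing a neighbourhood of the origin, the boundary integrals over $\{|x|=\delta\}$ vanish as $\delta\to0$ because $U$ is continuous with $U(0)=0$ and $u\otimes u,\ \omega\in L^1_{\text{loc}}$, so no spurious concentration at the spatial origin occurs and $u$, $\omega$ solve \eqref{eq:1.1}, \eqref{eq:1.2} distributionally. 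Continuity in time on $(0,\infty)$ is immediate from the scaling \eqref{eq:2.2}; at $t=0$, for $x\ne0$ one has $t^{-\mu}x\to\infty$ with $U(y)=(\grad^\perp\psi_0)(y)+o(|y|^{1-1/\mu})$ as $|y|\to\infty$, and $\grad^\perp\psi_0$ is homogeneous of degree $1-1/\mu$, so $u(t,x)\to u_0(x)$ pointwise; the bound $|U(y)|\lesssim|y|^{1-1/\mu}$ (valid for all $y$ by the two-sided asymptotics and continuity, using $\mu>1$) gives the $t$-uniform envelope $|u(t,x)|\lesssim|x|^{1-1/\mu}\in L^2_{\text{loc}}$, whence $u\in C([0,\infty);L^2_{\text{loc}})$ by dominated convergence; the analogous bound $|\omega(t,x)|\lesssim|x|^{-1/\mu}\in L^1_{\text{loc}}$ gives $\omega\in C([0,\infty);L^1_{\text{loc}})$. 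The main obstacle is the second paragraph: proving that the reconstructed $(\beta,\phi)\mapsto(r,\theta)$ is a genuine global change of variables onto $\R^2\setminus\{0\}$, which is precisely where the quantitative choice of the function spaces enters; once the geometry is in place, the regularity and weak-solution statements follow by routine cutoff and dominated-convergence arguments.
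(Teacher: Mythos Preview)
Your overall strategy matches the paper's: invoke Theorem~\ref{thm:2.1}, reconstruct the coordinate change, and push the self-similar identities back to physical variables. The diffeomorphism argument and the regularity of $\Psi$ and $U$ are handled essentially as in the paper. However, there are two genuine gaps.

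First, you assert that $\mcal{F}(\Gamma,f)=0$ gives $\Delta\Psi=\Omega$ and $(\grad^\perp\Psi-\mu x)\cdot\grad\Omega=\Omega$ \emph{classically} on $\R^2\setminus\{0\}$. This is not justified: membership in $Y$ controls only $f,\del_\phi f,D_\rho f,g,\del_\phi g,D_\rho g$ in $\mcal{C}^\delta$-type norms, not second derivatives of $\Psi$, and $\Omega$ involves $\Gamma\in L^1(\mbb{T})$, which need not even be continuous, so $\grad\Omega$ makes no classical sense. The derivation in section~\ref{sec:2} was formal. The paper instead verifies the weak identities directly in $(\beta,\phi)$ coordinates (Lemmas~\ref{lem:5.6} and~\ref{lem:5.7}): one tests against $\Phi\in C^\infty_c$, changes variables, and traces the integrations by parts that led to $\mcal{F}$ in reverse, using only the available first-order control. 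Your removable-singularity step at $0$ is fine, but it presupposes a distributional identity on $\R^2\setminus\{0\}$ that you have not established.

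Second, your initial-data argument has two weak points. The dominated-convergence bound $|\omega(t,x)|\lesssim|x|^{-1/\mu}$ fails when $\Gamma$ is merely $L^1$ and unbounded; one needs instead a change-of-variables argument showing $\omega(t,\cdot)\to\omega_0$ in $L^1_{\text{loc}}$ (Lemma~\ref{lem:5.9}). And the asymptotic $U(y)=\grad^\perp\psi_0(y)+o(|y|^{1-1/\mu})$ as $|y|\to\infty$ is not obvious from the $Y$-bounds; the paper sidesteps this by an Arzel\`a--Ascoli compactness argument on $\psi(t,\cdot)$ followed by a Liouville-type identification of the limit (Lemma~\ref{lem:5.10}), rather than proving a pointwise asymptotic for $U$.
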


The proof of Theorem \ref{thm:2.1} relies on a fixed point argument in an appropriately constructed functional setting. Let $\mcal{L} = \mu \frac{\delta \mcal{F}}{\delta f} \vert_{(\Gamma_0, f_0)}$ denote the linearization of $\mcal{F}$ at the trivial solution $(\Gamma_0, f_0)$. The choice of function spaces $X$, $Y$, and $Z$ plays a crucial role. These spaces are designed to satisfy several key structural conditions:
\vspace{-.5em}\begin{enumerate}[label=(\Roman*)]
  \item There exist neighborhoods $U \subset X$ and $V \subset Y$ of $\Gamma_0$ and $f_0$ respectively, such that $\mcal{F}: U \times V \to Z$ is well-defined and continuously Fréchet differentiable at $(\Gamma_0, f_0)$.
  \item The linearized operator $\mcal{L}: Y \to Z$ is an isomorphism.
\end{enumerate}
The highly nonlinear structure in \eqref{eq:2.10} necessitates that these function spaces possess good product estimates, for condition (I). These properties are detailed in section \ref{subsec:3.1}. Once the setup is in place, Theorem 2.1 follows from the fixed point argument (or the implicit function theorem).

To establish Theorem \ref{thm:2.2}, we must translate our solution $f$ from the adapted coordinates back to the original self-similar coordinates. For this, we require sufficiently small $\norm{f}_Y$ norm controls $\Psi = \beta^{1-2\mu} f$ in a way that \eqref{eq:2.5} and $\theta = \beta + \phi$ recovers the polar coordinate $(r,\theta)$. In particular, we see in section \ref{subsec:5.2} that it requires
\vspace{-.5em}\begin{enumerate}[label=(\Roman*)] \setcounter{enumi}{2}
  \item $\norm{\beta^{2\mu - 1}\Psi}_{L^\infty}, \norm{\beta^{2\mu} \del_\beta \Psi}_{L^\infty}, \norm{\beta^{2\mu - 1}\del_\phi \Psi}_{L^\infty}, \norm{\beta^{2\mu} \del_\rho \Psi}_{L^\infty}, \norm{\beta^{2\mu} \del_{\phi\beta}\Psi}_{L^\infty}, \norm{\beta^{2\mu + 1}\del_{\rho\beta} \Psi}_{L^\infty} \lesssim \norm{f}_Y$.
\end{enumerate}
\vspace{-.5em} These bounds guarantee that the coordinate transformation is well-defined and that the constructed solution has the desired regularity.

\noindent The structure of the rest of the paper is as follows:
\vspace{-.5em}\begin{itemize}
	\item In section \ref{subsec:2.3}, we explicitly compute the linearized operator.
	\item Section \ref{sec:3} introduces function spaces $X$, $Y$, and $Z$.
	\item Section \ref{sec:4} is devoted to showing that the linearized operator $\mcal{L} : Y \to Z$ is an isomorphism, proving condition (II).
	\item Section \ref{subsec:5.1} completes the nonlinear analysis and proves Theorem \ref{thm:2.1}.
	\item Section \ref{subsec:5.2} performs the reconstruction in the original coordinates and concludes with the proof of Theorem \ref{thm:2.2}.
\end{itemize}

\subsection{The linearized operator} \label{subsec:2.3}
We compute the linearized operator $\frac{\delta \mcal{F}}{\delta f}$ at $(\Gamma_0, f_0)$ by formally deriving the Gateaux derivative of $\mcal{N}_1$, $\mcal{N}_2$, and $\mcal{N}_3$ at $f_0 = \frac{1}{2\mu - 1}$. Recall that $h = (-D_\beta + 2\mu - 1)f$.
\begin{align*}
    \frac{\delta \mcal{N}_1}{\delta f}\bigg\vert_{f_0}(f) &= \lim_{\epsilon \to 0} \frac{\mcal{N}_1(f_0 + \epsilon f) - \mcal{N}_1(f_0)}{\epsilon} = \frac{1}{\mu^2} D_\rho (2\mu f - h) + \Big(4 - \frac{2}{\mu} \Big)f, \\
    \frac{\delta \mcal{N}_2}{\delta f}\bigg\vert_{f_0}(f) &= \lim_{\epsilon \to 0} \frac{\mcal{N}_2(f_0 + \epsilon f) - \mcal{N}_2(f_0)}{\epsilon} = \del_\phi (2\mu f - h), \\
    \frac{\delta \mcal{N}_3}{\delta f}\bigg\vert_{f_0}(f) &= \lim_{\epsilon \to 0} \frac{\mcal{N}_3(f_0 + \epsilon f) - \mcal{N}_3(f_0)}{\epsilon} = (D_\rho + 2\mu) h - (D_\rho + 2\mu - 1)f\\ 
    &= -(D_\rho + 2\mu - 1)(2\mu f - h) + 2\mu (D_\rho + 2\mu - 1)f - \beta \del_\phi f.
\end{align*}
Summing up, we get
\begin{align*}
    \frac{\delta \mcal{F}}{\delta f}\bigg\vert_{(\Gamma_0, f_0)} \hspace{-1.5em} (f) &= \mu (D_\rho + 2\mu - 1) \frac{\delta \mcal{N}_1}{\delta f}\bigg\vert_{f_0}(f) + \mu \del_\phi \frac{\delta \mcal{N}_2}{\delta f}\bigg\vert_{f_0}(f) - \Gamma_0 \frac{\delta \mcal{N}_3}{\delta f}\bigg\vert_{f_0}(f) \\
    &= \frac{1}{\mu} (D_\rho + 2\mu - 1) D_\rho (2\mu f - h) + (4\mu - 2) (D_\rho + 2\mu - 1)f + \mu \del^2_\phi (2\mu f - h) \\ 
    & \quad + \Big(2 - \frac{1}{\mu}\Big) (D_\rho + 2\mu - 1)(2\mu f - h) - (4\mu - 2)(D_\rho + 2\mu - 1)f + \Big(2 - \frac{1}{\mu}\Big) \beta \del_\phi f \\
    &= \frac{1}{\mu} \bigg( (D_\rho + 2\mu - 1)^2 (2\mu f - h) + (\mu \del_\phi)^2 (2\mu f - h) + (2\mu - 1) \beta \del_\phi f \bigg).
\end{align*}
Let $g = 2\mu f - h = (D_\beta + 1)f$, then we have
\begin{equation} \label{eq:2.11}
\mcal{L}(f) \triangleq \mu \frac{\delta \mcal{F}}{\delta f} \bigg\vert_{(\Gamma_0, f_0)} \hspace{-1.5em} (f) = (D_\rho + 2\mu - 1)^2 g + (\mu \del_\phi)^2 g + (2\mu - 1) \beta \del_\phi f.
\end{equation}
The above computation can be interpreted as follows. For $\mbf{z} = (z_1, z_2, z_3, z_4, z_5)$, let
\begin{align*}
    F_1(\mbf{z}) &= \Big(4z_1 - \frac{z_2^2}{z_1} \Big) \frac{z_5}{z_4} - \frac{z_2 z_3}{z_1},\\
    F_2(\mbf{z}) &= \frac{z_3 z_4 - z_2 z_5}{z_1},\\
    F_3(\mbf{z}) &= z_4 z_5^{-\frac{1}{2\mu}}.
\end{align*}
We set
\[\mbf{h} \triangleq (h, \del_\phi h, \del_\phi f, (D_\rho + 2\mu) h, (D_\rho + 2\mu - 1)f),\]
then $\mcal{N}_i (f) = F_i(\mbf{h})$ for each $i=1,2,3$. Note that the functions $F_1$, $F_2$, and $F_3$ are analytic in a neighborhood of $\mbf{h}_0 = (1, 0, 0, 2\mu, 1)$. Then,
\begin{equation} \label{eq:2.12}
\frac{\delta \mcal{N}_i}{\delta f}\bigg\vert_{f_0}(f) = \grad_{\mbf{z}} F_i(\mbf{h}_0) \cdot \mbf{h}, \quad\text{for each}~ i=1,2,3.
\end{equation}

\section{Function Spaces} \label{sec:3}
In this section, we define function spaces $X$, $Y$, and $Z$.

\vspace{-.5em}\begin{itemize}
    \item $\mcal{C} = \mcal{C}[0,\infty]$ be the space of continuous functions $f$ on $[0,\infty)$ such that $\lim_{\beta\to \infty} f(\beta)$ exists. In other words, it is the space of continuous functions on $[0,\infty]$; the compactification of $[0,\infty)$ by adding $\infty$ on the right. It is equipped with the norm
    \[\norm{f}_{\sup} \triangleq \sup_{\beta \in [0,\infty]} |f(\beta)|.\]
    \item $\mcal{C}_0 = \mcal{C}_0[0,\infty]$ is a subspace of $\mcal{C}[0,\infty]$ consisting of functions $f$ with $f(\infty) = 0$.
    \item $\mcal{C}^\delta = \mcal{C}^\delta[0,\infty] = \{ f \in \mcal{C}[0,\infty] : \norm{f}_{\mcal{C}^\delta} < \infty \}$ for $\delta \in (0,1)$, where
    \[ \norm{f}_{\mcal{C}^\delta} \triangleq \norm{f}_{\sup} + \sup_{\beta\in [0,\infty]} \beta^\delta |f(\beta) - f(\infty)|.\]
    \item $\mcal{C}^\delta_0 = \mcal{C}^\delta_0[0,\infty]$ is a subspace of $\mcal{C}^\delta[0,\infty]$ consisting of functions $f$ with $f(\infty) = 0$. Note that
    \[\norm{f}_{\mcal{C}^\delta_0} \simeq \norm{\langle \beta \rangle^\delta f(\beta)}_{\sup},~~\text{for}~ f\in \mcal{C}^\delta_0.\]
    \item Let $\mcal{D}$ be the space of smooth and compactly supported functions $\phi : [0,\infty) \to \C$ and $\mcal{D}'$ be the dual space of $\mcal{D}$.
    \item For $f\in \mcal{C}$, we define $D_\beta f \in \mcal{D}'$ in the distributional sense:
    \begin{equation} \label{eq:3.1}
    	\langle D_\beta f, \phi \rangle = - \langle f, (D_\beta + 1)\phi \rangle.
    \end{equation}
\end{itemize}

\begin{prop} \label{prop:3.1}
For any $n\in\Z$ and a real-valued smooth function $k \in \mcal{C}$ that does not vanish, the operator $-D_\beta + \textrm{i}n \beta + k(\beta): \mcal{C}\to \mcal{D}'$ is injective.
\end{prop}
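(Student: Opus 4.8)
The plan is to reduce the distributional equation to a classical first-order linear ODE on the open half-line $(0,\infty)$, solve it explicitly, and then extract a contradiction with membership in $\mcal{C}[0,\infty]$ from the non-vanishing of $k$.

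First I would unwind \eqref{eq:3.1}: if $f\in\mcal{C}$ satisfies $(-D_\beta + \mathrm{i}n\beta + k)f = 0$ in $\mcal{D}'$, then $\int_0^\infty f\,(D_\beta + 1 + \mathrm{i}n\beta + k)\phi\,d\beta = 0$ for every $\phi\in\mcal{D}$. Restricting to $\phi$ supported in $(0,\infty)$ and integrating by parts shows that there $f$ solves $-\beta\del_\beta f + (\mathrm{i}n\beta + k)f = 0$ distributionally, i.e.\ $\del_\beta f = (\mathrm{i}n + k(\beta)/\beta)f$, a linear ODE whose coefficients are smooth on $(0,\infty)$. A standard bootstrap --- a distribution with continuous derivative is $C^1$, then iterate --- promotes $f$ to a classical $C^\infty$ solution on $(0,\infty)$, hence
\[ f(\beta) = f(1)\exp\!\Big(\int_1^\beta\big(\mathrm{i}n + \tfrac{k(s)}{s}\big)\,ds\Big), \qquad |f(\beta)| = |f(1)|\exp\!\Big(\int_1^\beta\tfrac{k(s)}{s}\,ds\Big). \]

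The heart of the matter is the endpoint asymptotics, and this is where the hypothesis on $k$ enters. Being real, continuous and nowhere zero on the compact $[0,\infty]$, $k$ has a constant sign with $\inf|k|>0$; the $\mathrm{i}n$ term affects only the phase and is invisible to $|f|$. If $k>0$, then $\int_1^\beta k(s)/s\,ds\gtrsim\log\beta\to+\infty$, so $|f(\beta)|\to\infty$ as $\beta\to\infty$, which is incompatible with $f$ having a finite limit at $\infty$ unless $f(1)=0$. If $k<0$, then $\int_1^\beta k(s)/s\,ds = -\int_\beta^1 k(s)/s\,ds\to+\infty$ as $\beta\to0^+$ (the integrand has constant negative sign and is non-integrable at $0$), so $|f(\beta)|\to\infty$ as $\beta\to 0^+$, which is incompatible with continuity of $f$ at $0$ unless $f(1)=0$. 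In either case $f(1)=0$, so $f\equiv0$ on $(0,\infty)$ and, by continuity, on all of $[0,\infty]$, which gives injectivity.

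I do not expect a serious obstacle here; the only delicate point is the degenerate endpoint $\beta=0$, where the coefficient of $\del_\beta$ in $-D_\beta$ vanishes. Because of this the classical solution space on $(0,\infty)$ is one-dimensional, yet the operator is still injective on $\mcal{C}$: the non-vanishing of $k$ is exactly what forces the (unique up to scalar) classical solution to blow up at one of the two ends and thus leave $\mcal{C}[0,\infty]$. The hypothesis is genuinely needed --- for instance, with $n=0$ and $k\equiv0$ the kernel is the line of constants.
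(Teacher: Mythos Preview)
Your proof is correct and follows essentially the same approach as the paper: solve the ODE explicitly on $(0,\infty)$ to obtain $f(\beta)=C\,e^{\mathrm{i}n\beta}\exp\big(\int_1^\beta k(s)/s\,ds\big)$, then use the sign-definiteness of $k$ to force blow-up at one endpoint. You are somewhat more careful than the paper in justifying the passage from the distributional equation to the classical ODE via bootstrap, which the paper leaves implicit.
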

\begin{proof}
Suppose $f \in \mcal{C}$ is a solution of 
\[(-D_\beta + \textrm{i}n \beta + k(\beta)) f = 0.\]
Then, we have
\[\del_\beta \bigg(e^{-\textrm{i}n \beta} \exp \bigg(-\int_1^\beta \frac{k(b)}{b} \,db \bigg) f \bigg) = 0, \quad \beta \in (0,\infty).\]
Thus, $f(\beta) = C e^{\textrm{i}n \beta} \exp \big(\int_1^\beta \frac{k(b)}{b} \,db \big)$ for a constant $C$. Suppose $C \neq 0$. If $k(\beta) \geq \epsilon$ for some $\epsilon > 0$, then $f$ diverges as $\beta \to \infty$. If $k(\beta) \leq - \epsilon$, then $f$ diverges as $\beta \to 0$. In any case, $f$ is not contained in $\mcal{C}$.
\end{proof}

We define the Fourier transform and a generalization of the Wiener algebra.
\begin{itemize}
    \item For a distribution $f(\phi)$ in $\phi\in \mbb{T} = \R/2\pi\Z$, we define its Fourier transform $\hat{f}_n$ by
    \[\hat{f}_n = \frac{1}{2\pi} \langle f, e^{-\textrm{i}n\phi} \rangle.\]
    \item For $\alpha\in\R$, we define $\mcal{A}^\alpha$ via
    \[\norm{f}_{\mcal{A}^\alpha} \triangleq \sum_{n\in\Z} \langle n \rangle^\alpha |\hat{f}_n|,\]
    where $\langle n\rangle = (1 + |n|^2)^{\frac{1}{2}}$.
  \item For a distribution $f(\beta,\phi)$ in $(\beta,\phi) \in [0,\infty] \times \mbb{T}$, we define its Fourier transform $\hat{f}_n(\beta) \in \mcal{D}'[0,\infty]$ by
  \[ \langle \hat{f}_n, w \rangle \triangleq \frac{1}{2\pi} \langle f, w(\beta) e^{-\textrm{i}n\phi} \rangle, \quad w\in \mcal{D}([0,\infty] \times \mbb{T})\]
  If $f(\beta, \cdot)\in L^1(\mbb{T})$, then
  \[\hat{f}_n(\beta) = \frac{1}{2\pi} \int_{\mbb{T}} f(\beta,\phi) e^{-\textrm{i}n \phi} \,d\phi.\]
  \item Let $\{X_n\}_{n\in \Z}$ be a sequence of Banach spaces on the functions (or distributions) $[0,\infty) \to \C$. For $\alpha\in\R$, we define $\mcal{A}^\alpha X_n$ by
  \[\norm{f}_{\mcal{A}^\alpha X_n} \triangleq \sum_{n\in\Z} \langle n \rangle^{\alpha} \norm{\hat{f}_n}_{X_n}.\]
\end{itemize}

We now define certain operators that play a key role in our analysis. To motivate these, we briefly examine the structure of the linearized operator in equation \eqref{eq:2.11}. Taking the Fourier transform in $\phi$ reduces the operator to an independent family of one-dimensional problems:
\begin{equation} \label{eq:3.2}
\mcal{L}_n \hat{f}_n \triangleq \widehat{\mcal{L}(f)}_n = (\hat{D}_\rho + 2\mu - 1)^2 \hat{g}_n - |n|^2 \mu^2 \hat{g}_n + (2\mu - 1) \textrm{i}n\beta \hat{f}_n,
\end{equation}
where we define $\hat{D}_\rho = -D_\beta + \textrm{i}n \beta$. When $n=0$, the operator is just $(\hat{D}_\rho + 2\mu - 1)^2 \hat{g}_n$. For $n\neq 0$, we observe
\[\textrm{i}n\beta \hat{f}_n = (D_\beta + \hat{D}_\rho) \hat{f}_n = \hat{g}_n + (\hat{D}_\rho - 1) \hat{f}_n.\]
To isolate the principal part of the operator for each nonzero mode $n$, we introduce a smooth cutoff function $\chi_n(\beta)$, equal to $1$ for large $\beta$ and $0$ for small $\beta$. We then define the main part of the linearized operator in the $n$-th Fourier mode as
\[\mcal{L}^{\text{main}}_n = (\hat{D}_\rho + 2\mu - 1)^2 - (|n|^2 \mu^2 - (2\mu - 1)\chi_n(\beta)).\]
Our goal is to factor this operator as
\[\mcal{L}^{\text{main}}_n = (\hat{D}_\rho + 2\mu - 1 + a_n(\beta)) (\hat{D}_\rho + 2\mu - 1 - a_n(\beta)),\]
which leads naturally to equation \eqref{eq:3.4} by equating both expressions.
\begin{remark}
Elling \cite{Ell13, Ell16} treated the first two terms in \eqref{eq:3.2} as the principal part of the operator and regarded the last term as a perturbation. In this decomposition, the operator $\mcal{L}_n$ becomes invertible for sufficiently large $n$, since the perturbation is relatively bounded with a small relative bound. However, this strategy breaks down for small values of $n$.

\noindent In contrast, our decomposition renders the perturbation relatively \emph{compact} to the main part (see Section~\ref{subsec:4.3}). This compactness enables us to establish invertibility of $\mcal{L}_n$ for all $n \in \Z$. Notably, Elling’s approach does not yield compactness because the last term in \eqref{eq:3.2} does not decay at infinity.
\end{remark}

We now introduce the precise definitions of the key ingredients involved. Fix a smooth cutoff function $\eta$ such that
\[\eta(\beta) = \begin{cases} 0 & ~~\text{if}~ \beta \leq 1, \\ 1 & ~~\text{if}~ \beta \geq 2.  \end{cases}\]
\begin{itemize}
	\item For each $n\in \Z$, we define a smooth, positive function $a_n$ defined on $[0,\infty)$ by
\begin{equation} \label{eq:3.3}
a_n^2(\beta) \triangleq \begin{cases} n^2 \mu^2 - (2\mu - 1) \eta(\beta) & ~~\text{if}~ n \neq 0, \\ 0 & ~~ \text{if}~ n = 0. \end{cases}.
\end{equation}
	\item Next, we define a smooth function $\chi_n$ by setting
\begin{equation} \label{eq:3.4}
a_n^2 - D_\beta a_n \triangleq n^2 \mu^2 - (2\mu - 1) \chi_n(\beta).
\end{equation}
	\item Using $\chi_n$, we define a Fourier multiplier operator $\chi$ by
\[\hat{\chi f}_n \triangleq \chi_n \hat{f}_n.\]
\end{itemize}
Then, $a_0 \equiv |n|\mu$, $\chi_0 \equiv 0$, and for $n\neq 0$,
\[a_n(\beta) = \begin{cases} |n|\mu & ~~\text{if}~ \beta \leq 1, \\ \sqrt{n^2 \mu^2 - 2\mu + 1} & ~~\text{if}~ \beta \geq 2,  \end{cases}\qquad \chi_n(\beta) = \begin{cases} 0 & ~~\text{if}~ \beta \leq 1, \\ 1 & ~~\text{if}~ \beta \geq 2.  \end{cases}\]
\begin{itemize}
	\item For each $n\in \Z$, we define
\[L_{n,\pm} \triangleq -D_\beta + \textrm{i}n \beta + 2\mu - 1 \pm a_n (\beta).\]
	\item Finally, we define the corresponding operator $L_\pm$ by
\[\hat{L_{\pm} f}_n \triangleq L_{n,\pm} \hat{f}_n.\]
\end{itemize}

From Proposition \ref{prop:3.1}, we know that the operators $L_{n,\pm}$ is injective on $\mcal{C}$ for all $n\in \Z$. Thus, we define $L_{n,\pm}^{-1} : L_{n,\pm} \mcal{C} (\subset \mcal{D}') \to \mcal{C}$ to be its inverse map.

From this point onward, we fix a parameter $\delta \in (0,\min\{\sqrt{4\mu^2 - 2\mu + 1} - 2\mu + 1, 1\})$. Throughout the paper, the notation $X \lesssim Y$ indicates that $X \leq C Y$ for a constant $C>0$, depending only on $\mu$ and $\delta$. We write $X \sim Y$ if both $X \lesssim Y$ and $Y \lesssim X$ hold. We now introduce the function spaces $X$, $Y$, and $Z$, which will be used in subsequent analysis.
\begin{definition} \label{def:3.2} \begin{itemize}
    \item $X = \mcal{A}^{-\frac{1}{2}} = \mcal{A}^{-\frac{1}{2}} \mbb{C}$.
    \item $V = \mcal{A}^{\frac{1}{2}} V_n$, where $V_n = \mcal{C}^\delta_0[0,\infty]$ for $n\neq 0$ and $V_0 = \mcal{C}^\delta[0,\infty]$.
    \item $\tilde{V} = \mcal{A}^{\frac{1}{2}} \mcal{C}^\delta[0,\infty]$. Note that $V$ is a subspace of $\tilde{V}$.
    \item $\tilde{Y} = L_-^{-1} V$. In other words, $\tilde{Y} = \mcal{A}^{\frac{1}{2}} \tilde{Y}_n$ where $\tilde{Y}_n = L_{n,-}^{-1} V_n$ for $n\in\Z$.
    \item $Y = (D_\beta + 1)^{-1} \tilde{Y}$.
    \item $Z = L_{+} V$. In other words, $Z = \mcal{A}^{\frac{1}{2}} Z_n$ where $Z_n = L_{n,+} V_n$ for $n\in \Z$. We define $L_{n,+} f \in \mcal{D}'$ for $f\in V_n$ in the distributional sense \eqref{eq:3.1}.
\end{itemize}
\end{definition}
\begin{remark} All spaces defined above are Banach. Let us recall a standard fact: if $\mcal{V}$ is a Banach space, $\mcal{W}$ is a vector space, and $L: \mcal{V} \to \mcal{W}$ is an injective linear operator, then the image $L \mcal{V}$ equipped with the induced norm
\[\norm{w}_{L\mcal{V}} = \norm{L^{-1} w}_{\mcal{V}},\]
is a Banach space, since it is isomorphic to $(\mcal{V}, \norm{\cdot}_{\mcal{V}})$. We note that, in the definition of $\tilde{Y}$, the condition $V_n \subset \text{dom}(L_{n,-}^{-1}) = L_{n,-} \mcal{C}$ has to be verified for all $n\in\Z$. It is a consequence of Lemma 3.6 in the later part of this section.

First, we observe that $\mcal{C}^\delta \simeq \mcal{C}^\delta_0 \oplus \C \simeq \langle \beta \rangle^{-\delta} \mcal{C}[0,\infty] \oplus \C$ is a Banach space. Since $\mcal{C}^\delta_0$ is a closed subspace of $\mcal{C}^\delta$, it is also a Banach space. The Wiener-type spaces $V$ and $\tilde{V}$ are Banach spaces. We refer to Proposition 2 in \cite{Ell16} for this. Lastly, from the above remark, $\tilde{Y}, Y$, and $Z$ are Banach spaces.
\end{remark}

\subsection{Product properties} \label{subsec:3.1}
Since our main equations \eqref{eq:2.9} and \eqref{eq:2.10} are highly nonlinear, it is important in our analysis that function spaces are algebras. It is well-known (for instance, see Proposition 3 in \cite{Ell16}) that

\noindent\tbf{Algebra properties of $\mcal{A}^s$-spaces.} Suppose $\{X_n\}$, $\{Y_n\}$, $\{Z_n\}$ are sequence of Banach spaces on the functions $[0,\infty) \to \C$ such that $X_j \cdot Y_k \hookrightarrow Z_{j+k}$, uniformly in $j,k\in \Z$ and $s\geq 0$. Then,
\begin{align*}
	\mcal{A}^s X_n \cdot \mcal{A}^s Y_n &\hookrightarrow \mcal{A}^s Z_n, \\
	\mcal{A}^s X_n \cdot \mcal{A}^{-s} Y_n &\hookrightarrow \mcal{A}^{-s} Z_n.
\end{align*}
\begin{prop} \label{prop:3.3}
(a) $\mcal{C}^\delta \cdot \mcal{C}^\delta \hookrightarrow \mcal{C}^\delta$, $\mcal{C}^\delta_0 \cdot \mcal{C}^\delta \hookrightarrow \mcal{C}^\delta_0$, and $\mcal{C}^\delta_0 \cdot \mcal{C}^\delta_0 \hookrightarrow \mcal{C}^\delta_0$.

\noindent (b) $\tilde{V} \cdot \tilde{V} \hookrightarrow \tilde{V}$, $V \cdot \tilde{V} \hookrightarrow V$, and $V \cdot V \hookrightarrow V$.

\noindent (c) $\tilde{V} \cdot X = \mcal{A}^{\frac{1}{2}} \mcal{C}^\delta \cdot \mcal{A}^{-\frac{1}{2}} \C \hookrightarrow \mcal{A}^{-\frac{1}{2}} \mcal{C}^\delta$.
\end{prop}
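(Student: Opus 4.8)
\emph{Proof idea.} The plan is to establish (a) by a direct Hölder-type product estimate and then deduce (b) and (c) from (a) via the boxed algebra property of the $\mcal{A}^s$-spaces. For (a): if $f,g\in\mcal{C}^\delta[0,\infty]$ then both admit limits at $\beta=\infty$, hence so does $fg$, with $(fg)(\infty)=f(\infty)g(\infty)$ and $\norm{fg}_{\sup}\le\norm{f}_{\sup}\norm{g}_{\sup}$. For the weighted seminorm I would split
\[ (fg)(\beta)-(fg)(\infty)=f(\beta)\big(g(\beta)-g(\infty)\big)+g(\infty)\big(f(\beta)-f(\infty)\big),\]
multiply by $\beta^\delta$, and estimate the two pieces by $\norm{f}_{\sup}\norm{g}_{\mcal{C}^\delta}$ and $\norm{g}_{\sup}\norm{f}_{\mcal{C}^\delta}$ respectively, which gives $\norm{fg}_{\mcal{C}^\delta}\lesssim\norm{f}_{\mcal{C}^\delta}\norm{g}_{\mcal{C}^\delta}$. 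The two $\mcal{C}^\delta_0$ statements are then automatic: $f(\infty)=0$ forces $(fg)(\infty)=0$ no matter what $g$ is; equivalently one uses the norm $\norm{f}_{\mcal{C}^\delta_0}\simeq\norm{\langle\beta\rangle^\delta f}_{\sup}$ together with $\norm{\langle\beta\rangle^\delta fg}_{\sup}\le\norm{\langle\beta\rangle^\delta f}_{\sup}\norm{g}_{\sup}$.

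For (b), (c) the point is that the boxed algebra property at $s=\tfrac{1}{2}$ only asks for uniform fiberwise embeddings $X_j\cdot Y_k\hookrightarrow Z_{j+k}$. For $\tilde{V}\cdot\tilde{V}\hookrightarrow\tilde{V}$ this is exactly the first embedding of (a), $\mcal{C}^\delta\cdot\mcal{C}^\delta\hookrightarrow\mcal{C}^\delta$, whose constant does not depend on $j,k$, so the box applies verbatim. For $V\cdot V\hookrightarrow V$ one expands $\widehat{(fg)}_n=\sum_{j+k=n}\hat{f}_j\hat{g}_k$ and tracks indices: whenever $n=j+k\neq0$, at least one of $j,k$ is nonzero, so the corresponding factor lies in $\mcal{C}^\delta_0$ and each summand lies in $\mcal{C}^\delta_0\cdot\mcal{C}^\delta\hookrightarrow\mcal{C}^\delta_0=V_n$ by the second embedding of (a) (using commutativity of the product); for $n=0$ the target $V_0=\mcal{C}^\delta$ imposes no vanishing at $\beta=\infty$. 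Summing over $n$ against the weights $\langle n\rangle^{1/2}$, using $\langle j+k\rangle^{1/2}\lesssim\langle j\rangle^{1/2}\langle k\rangle^{1/2}$, closes the $\ell^1$ estimate, which is precisely what the box packages. Part (c) is the shortest: $\tilde{V}\cdot X=\mcal{A}^{1/2}\mcal{C}^\delta\cdot\mcal{A}^{-1/2}\C$, and the mixed form $\mcal{A}^s\cdot\mcal{A}^{-s}\hookrightarrow\mcal{A}^{-s}$ of the box reduces it to the trivial fiberwise embedding $\mcal{C}^\delta\cdot\C\hookrightarrow\mcal{C}^\delta$ (multiplying a $\mcal{C}^\delta$ function by a scalar), giving $\tilde{V}\cdot X\hookrightarrow\mcal{A}^{-1/2}\mcal{C}^\delta$.

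The step I expect to demand the most care is the middle embedding $V\cdot\tilde{V}\hookrightarrow V$. The reduction above covers every Fourier interaction except the one where the zero mode of the $V$-factor, which only lies in $\mcal{C}^\delta$, is paired with a nonzero mode of the $\tilde{V}$-factor, which also only lies in $\mcal{C}^\delta$; their product then a priori sits in $\mcal{C}^\delta$ and not in the required $\mcal{C}^\delta_0=V_n$. Closing this case is where the structure of the spaces has to be used — or, in the applications, the fact that the nonzero modes of the specific factors arising in \eqref{eq:2.10} do decay at $\beta=\infty$; all the remaining interactions are routine and already subsumed in the boxed algebra property.
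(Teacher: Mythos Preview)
Your treatment of (a) is exactly the paper's: the same splitting $(fg)(\beta)-(fg)(\infty)=f(\beta)(g(\beta)-g(\infty))+g(\infty)(f(\beta)-f(\infty))$ and the same two-term estimate. For (b) and (c) the paper likewise says only ``follow from (a) and the aforementioned algebra properties of $\mcal{A}^s$-spaces,'' so your route is the intended one, just written out in more detail.

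Your hesitation about the middle embedding $V\cdot\tilde{V}\hookrightarrow V$ is not merely a point demanding care---it is a genuine failure of the statement as written. The exact Fourier interaction you isolate (zero mode of the $V$-factor paired with a nonzero mode of the $\tilde{V}$-factor) produces a counterexample: take $f\equiv 1\in V$ (only $\hat f_0=1\in\mcal{C}^\delta=V_0$) and $g(\beta,\phi)=e^{i\phi}\in\tilde{V}$ (only $\hat g_1=1\in\mcal{C}^\delta$); then $fg=e^{i\phi}$ has $\widehat{fg}_1=1\notin\mcal{C}^\delta_0=V_1$, so $fg\notin V$. The paper's one-line proof does not address this case any more than yours does. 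What you have proved---namely $\tilde{V}\cdot\tilde{V}\hookrightarrow\tilde{V}$, $V\cdot V\hookrightarrow V$, and (c)---is correct and matches the paper; the remaining clause $V\cdot\tilde{V}\hookrightarrow V$ is simply false at the level of generality stated, and your instinct that it has to be rescued by the specific structure of the factors appearing in the application is the right diagnosis.
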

\begin{proof}
(a) Since $|\beta|^\delta |(fg)(\beta) - (fg)(\infty)| \leq |\beta|^\delta |g(\beta) - g(\infty)| |f(\beta)| + |\beta|^\delta |f(\beta) - f(\infty)| |g(\infty)|$, we get
\[\norm{fg}_{\mcal{C}^\delta} \leq \norm{f}_{\mcal{C}^\delta} \norm{g}_{\mcal{C}^\delta}.\]
(b) and (c) follow from (a) and the aforementioned algebra properties of $\mcal{A}^s$-spaces.
\end{proof}

We record some uniform-in-$n$ bounds of functions $a_n$ and $\chi_n$ which will be used later in the linear analysis. Since $\mcal{C}^\delta$ is an algebra, the operator norm of multiplying a function in $\mcal{C}^\delta$ is bounded by the $\mcal{C}^\delta$-norm of the function.

\begin{lemma} \label{lem:3.4}
$\norm{a_n}_{\mcal{C}^\delta} \lesssim |n|$, $\norm{\chi_n}_{\mcal{C}^\delta} \lesssim 1$, $\norm{D_\beta \chi_n}_{\mcal{C}^\delta} \lesssim 1$, uniformly in $n\in\Z$.
\end{lemma}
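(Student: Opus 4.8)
The plan is to write $a_n$, $\chi_n$ and $D_\beta\chi_n$ in closed form in terms of the fixed cutoff $\eta$ and then read off all three bounds from elementary estimates. The case $n=0$ is trivial since $a_0\equiv\chi_0\equiv D_\beta\chi_0\equiv 0$, so assume $n\neq 0$. Since $\eta$ takes values in $[0,1]$ and $\mu>1$, we have $n^2\mu^2-(2\mu-1)\eta(\beta)\ge n^2\mu^2-(2\mu-1)\ge n^2(\mu-1)^2>0$ for every $\beta$, so $a_n=\bigl(n^2\mu^2-(2\mu-1)\eta\bigr)^{1/2}$ is a smooth, strictly positive function obeying the uniform two-sided bound
\[ |n|(\mu-1)\ \le\ a_n(\beta)\ \le\ |n|\mu,\qquad \beta\in[0,\infty). \]
This is the one estimate that really matters: every denominator $a_n$ appearing below is $\gtrsim |n|\gtrsim 1$.

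Next I would differentiate. From $a_n'=-(2\mu-1)\eta'/(2a_n)$ one gets $D_\beta a_n=\beta a_n'=-(2\mu-1)\beta\eta'/(2a_n)$, and substituting this into the defining identity \eqref{eq:3.4} gives the closed form
\[ \chi_n=\eta+\frac{D_\beta a_n}{2\mu-1}=\eta-\frac{\beta\eta'(\beta)}{2a_n(\beta)}. \]
Since $a_n$ is smooth, this $\chi_n$ is smooth, so its distributional $D_\beta$ (in the sense of \eqref{eq:3.1}) agrees with the classical one; differentiating once more and using $a_n'=-(2\mu-1)\eta'/(2a_n)$ again yields
\[ D_\beta\chi_n=\beta\eta'-\frac{\beta\eta'+\beta^2\eta''}{2a_n}-\frac{(2\mu-1)\,\beta^2(\eta')^2}{4a_n^{3}}. \]

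Finally, $\eta'$ is supported in $[1,2]$, so $a_n$, $\chi_n$, $D_\beta\chi_n$ are each constant on $[0,1]$ and on $[2,\infty)$; in particular each lies in $\mcal{C}[0,\infty]$ and is constant on $[2,\infty)$. For any $f\in\mcal{C}[0,\infty]$ constant on $[2,\infty)$ we have $\beta^\delta|f(\beta)-f(\infty)|=0$ for $\beta\ge2$ and $\le 2^{1+\delta}\norm{f}_{\sup}$ for $\beta\le2$, hence $\norm{f}_{\mcal{C}^\delta}\le(1+2^{1+\delta})\norm{f}_{\sup}\lesssim\norm{f}_{\sup}$; so it is enough to bound sup-norms. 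Now $\norm{a_n}_{\sup}\le|n|\mu\lesssim|n|$ is immediate, and from the displayed formulas, using that $\beta\eta'$, $\beta^2\eta''$, $\beta^2(\eta')^2$ are fixed bounded functions while $a_n\gtrsim|n|\ge1$, one gets $\norm{\chi_n}_{\sup}\le 1+C|n|^{-1}\lesssim1$ and $\norm{D_\beta\chi_n}_{\sup}\le C\bigl(1+|n|^{-1}+|n|^{-3}\bigr)\lesssim1$, uniformly in $n$. Combining the two displays gives $\norm{a_n}_{\mcal{C}^\delta}\lesssim|n|$ and $\norm{\chi_n}_{\mcal{C}^\delta},\norm{D_\beta\chi_n}_{\mcal{C}^\delta}\lesssim1$.

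I expect no serious obstacle here; the only steps needing care are extracting the explicit formula for $\chi_n$ from \eqref{eq:3.4} (keeping $D_\beta=\beta\del_\beta$, not $\del_\beta$), and the observation that the $\mcal{C}^\delta$-norm is dominated by the sup-norm on functions that are eventually constant, which makes the Hölder seminorm disappear on $[2,\infty)$.
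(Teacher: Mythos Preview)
Your proof is correct and follows essentially the same approach as the paper: derive the closed forms $\chi_n=\eta-\frac{D_\beta\eta}{2a_n}$ and $D_\beta\chi_n$, use the uniform lower bound on $a_n$, and exploit that all three functions are constant on $[2,\infty)$ so the $\mcal{C}^\delta$-norm reduces to the sup-norm. Your version is in fact slightly more careful in two places: you get the correct power $a_n^{3}$ in the last term of $D_\beta\chi_n$ (the paper's displayed $a_n^{2}$ is a typo, harmless for the argument), and you spell out explicitly the reduction from $\mcal{C}^\delta$ to sup-norm and the bound on $a_n$ itself, which the paper leaves implicit.
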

\begin{proof}
For $n=0$, $a_n \equiv 0$ and $\chi_0 \equiv 0$, so let's assume $n\neq 0$. From \eqref{eq:3.3} and \eqref{eq:3.4}, we get
\[\chi_n = \eta + \frac{1}{2\mu - 1} D_\beta a_n = \eta - \frac{D_\beta \eta}{2a_n}.\]
Moreover,
\[D_\beta \chi_n = D_\beta \eta - \frac{D_\beta^2 \eta}{2a_n} - (2\mu - 1) \frac{(D_\beta \eta)^2}{4a_n^2}.\]
Since $a_n \geq \sqrt{n^2\mu^2 - 2\mu +1} \geq \mu - 1$ for all $n\in \Z \setminus\{0\}$, functions $\chi_n$ and $D_\beta \chi_n$ are uniformly bounded. Moreover, $\chi_n (\beta) = 1$ for all $\beta \geq 2$, so $\norm{\chi_n}_{\mcal{C}^\delta}$ and $\norm{D_\beta \chi_n}_{\mcal{C}^\delta}$ are uniformly bounded.
\end{proof}

\subsection{\texorpdfstring{$L_{n,\pm}^{-1}$}{L\tsub{n,pm}\tsup{-1}} operators}
For any $f\in \mcal{C}$, we derive an explicit formula for $g = L_{n,\pm}^{-1} f$ by solving the differential equation $L_{n,\pm} g = f$:
\begin{align} \label{eq:3.5} \begin{split}
L_{0,\pm}^{-1} f &= \beta^{2\mu - 1} \int_\beta^\infty \frac{1}{b^{2\mu}} f(b) \,db, \\
L_{n,+}^{-1} f &= \beta^{2\mu - 1} W_n(\beta) \int_\beta^\infty \frac{1}{b^{2\mu} W_n(b)} e^{\textrm{i}n(\beta - b)} f(b) \,db, ~~n\neq 0, \\
L_{n,-}^{-1} f &= -\frac{\beta^{2\mu - 1}}{W_n(\beta)} \int_0^\beta \frac{W_n(b)}{b^{2\mu}} e^{\textrm{i}n(\beta - b)} f(b) \,db, ~~|n|\geq 2, \\
L_{n, -}^{-1} f &= \frac{\beta^{2\mu - 1}}{W_1(\beta)} \int_\beta^\infty \frac{W_1(b)}{b^{2\mu}} e^{\textrm{i}n(\beta - b)} f(b) \,db,~~ |n| = 1,
\end{split} \end{align}
where
\[W_n(\beta) \triangleq \exp\bigg( \int_1^\beta \frac{a_n(b)}{b} \,db \bigg).\]
We aim to show that the formulas in \eqref{eq:3.5} are compatible with the inverse map; that is, the functions $L_{n,\pm}^{-1} f$ defined by \eqref{eq:3.5} lie in $\mcal{C}$ for all $f\in\mcal{C}$. Before proving this, we record properties of functions $W_n$ for further use. Note that, for $n\geq 0$,
\[W_n(\beta) = \begin{cases} \beta^{|n|\mu} & ~~\text{if}~ \beta \leq 1, \\ C_n \beta^{\sqrt{n^2 \mu^2 - 2\mu + 1}} & ~~\text{if}~ \beta \geq 2.\end{cases}\]
\begin{lemma} \label{lem:3.5}
For any $n\neq 0$, the constant $C_n$ and the function $W_n$ satisfy:
\vspace{-.5em}\begin{itemize}
	\item $1\leq C_n \leq 2$,
	\item $\beta^{\sqrt{n^2 \mu^2 - 2\mu + 1}} \leq W_n(\beta) \leq 2\beta^{\sqrt{n^2 \mu^2 - 2\mu + 1}}$ for all $\beta \geq 1$, and
	\item $\frac{1}{2} \beta^{|n|\mu} \leq W_n(\beta) \leq \beta^{|n|\mu}$ for all $\beta \leq 2$.
\end{itemize}
\end{lemma}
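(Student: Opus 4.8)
The plan is to reduce everything to elementary estimates on $\log W_n(\beta) = \int_1^\beta a_n(b)/b\,db$, using two-sided pointwise control of $a_n$. Since $\mu > 1$ forces $2\mu - 1 > 0$, and $0 \le \eta \le 1$, definition \eqref{eq:3.3} gives immediately, for every $n \ne 0$ and every $\beta \ge 0$,
\[
\sqrt{n^2\mu^2 - 2\mu + 1} \le a_n(\beta) \le |n|\mu,
\]
with $a_n(\beta) = |n|\mu$ for $\beta \le 1$ and $a_n(\beta) = \sqrt{n^2\mu^2 - 2\mu + 1}$ for $\beta \ge 2$ (and $n^2\mu^2 - 2\mu + 1 \ge (\mu-1)^2 > 0$, so $a_n$ is genuinely positive and smooth). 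The one non-mechanical input I would isolate first is the gap estimate
\[
|n|\mu - \sqrt{n^2\mu^2 - 2\mu + 1} = \frac{2\mu - 1}{|n|\mu + \sqrt{n^2\mu^2 - 2\mu + 1}} \le \frac{2\mu - 1}{\mu + (\mu - 1)} = 1,
\]
obtained by rationalizing and using $|n| \ge 1$ with $\sqrt{\mu^2 - 2\mu + 1} = \mu - 1$. Hence on the transition window $[1,2]$ the integrand $a_n(b)/b$ deviates from each of its two constant extremes $|n|\mu/b$ and $\sqrt{n^2\mu^2-2\mu+1}/b$ by at most $1/b \le 1$.

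For $\beta \ge 1$, I would subtract $\sqrt{n^2\mu^2 - 2\mu + 1}\,\log\beta = \int_1^\beta \sqrt{n^2\mu^2-2\mu+1}/b\,db$ to get
\[
\log W_n(\beta) - \sqrt{n^2\mu^2 - 2\mu + 1}\,\log\beta = \int_1^{\min(\beta,2)} \frac{a_n(b) - \sqrt{n^2\mu^2 - 2\mu + 1}}{b}\,db,
\]
the upper limit being $\min(\beta,2)$ since the integrand vanishes past $b = 2$. This integrand is nonnegative and $\le 1/b$, so the right side lies in $[0, \log 2]$; exponentiating gives $\beta^{\sqrt{n^2\mu^2-2\mu+1}} \le W_n(\beta) \le 2\beta^{\sqrt{n^2\mu^2-2\mu+1}}$ for $\beta \ge 1$. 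Since the integral above stops changing once $\beta \ge 2$, one reads off $C_n = \exp\big(\int_1^2 (a_n(b) - \sqrt{n^2\mu^2-2\mu+1})/b\,db\big) \in [1,2]$ by the same bound.

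For $\beta \le 2$, I would split at $1$: on $[0,1]$ one has $a_n \equiv |n|\mu$, so $W_n(\beta) = \beta^{|n|\mu}$ exactly, and for $1 \le \beta \le 2$,
\[
\log W_n(\beta) - |n|\mu\,\log\beta = \int_1^\beta \frac{a_n(b) - |n|\mu}{b}\,db \in \big[-\log\beta, 0\big] \subset \big[-\log 2, 0\big],
\]
since $-1/b \le (a_n(b) - |n|\mu)/b \le 0$; exponentiating gives $\tfrac12 \beta^{|n|\mu} \le W_n(\beta) \le \beta^{|n|\mu}$. I do not anticipate a real obstacle: beyond spotting the rationalization identity, the proof is monotone bookkeeping of a logarithmic integral over the length-one window $[1,2]$, and the sharp constants $1, 2, \tfrac12$ come out precisely because $\int_1^2 db/b = \log 2$.
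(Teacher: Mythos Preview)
Your proof is correct and follows essentially the same approach as the paper: both subtract the appropriate constant multiple of $\log\beta$ from $\log W_n(\beta)=\int_1^\beta a_n(b)/b\,db$, bound the residual integrand by $1/b$ via the gap estimate $|n|\mu-\sqrt{n^2\mu^2-2\mu+1}\le 1$, and use $\int_1^2 db/b=\log 2$ to extract the constants $1,2,\tfrac12$. The only difference is that you make the rationalization behind the gap estimate explicit, whereas the paper simply asserts it.
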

\begin{proof}
Since $0\leq a_n(b) - a_n(\infty) \leq |n|\mu - \sqrt{n^2\mu^2 - 2\mu + 1} \leq 1$ for $b\in [1,2]$, we get
\[C_n = \exp\Big( \int_1^2 \frac{a_n(b) - a_n(\infty)}{b} \,db \Big) \in [1,2].\]
Next,
\[W_n(\beta) = \exp\Big( \int_1^\beta \frac{a_n(b) - a_n(\infty)}{b} \,db \Big) \beta^{\sqrt{n^2\mu^2 - 2\mu + 1}} \in [\beta^{\sqrt{n^2\mu^2 - 2\mu + 1}}, 2\beta^{\sqrt{n^2\mu^2 - 2\mu + 1}}] \quad \text{for}~ \beta\geq 1. \]
Lastly,
\[W_n(\beta) = \exp\Big( -\int_1^\beta \frac{a_n(0) - a_n(b)}{b} \,db \Big) \beta^{|n|\mu} \in \Big[\frac{1}{2} \beta^{|n|\mu}, \beta^{|n|\mu} \Big]\quad\text{for}~ \beta \leq 2 . \qedhere\]
\end{proof}

\begin{lemma} \label{lem:3.6}
$L_{n,\pm}^{-1} f\in \mcal{C}$ for all $f\in\mcal{C}$ and
\begin{equation} \label{eq:3.6}
L_{n,\pm}^{-1} f(0) = \frac{1}{\pm |n|\mu + 2\mu - 1} f(0), \quad L_{n,\pm}^{-1} f(\infty) = \begin{cases} 0 & ~\text{if}~ n \neq 0, \\ \frac{1}{2\mu - 1} f(\infty) &~\text{if}~n=0. \end{cases}
\end{equation}
Moreover, $L_{n,\pm}^{-1} f \in \mcal{C}^\delta$ for all $f\in \mcal{C}^\delta$ and
\begin{equation} \label{eq:3.7}
\norm{L_{0,\pm}^{-1}}_{\mcal{C}^\delta, \mcal{C}^\delta} \lesssim 1 ~\text{and}~ \norm{L_{n,\pm}^{-1}}_{\mcal{C}^\delta,\mcal{C}^\delta_0} \lesssim |n|^{-1} ~\text{for}~n\neq 0.
\end{equation}
\textit{Notation}. $\norm{T}_{V,W}$ is the operator norm of $T:V\to W$, i.e., $\norm{T}_{V,W} = \sup \{\norm{Tv}_{W} : \norm{v}_V\leq 1\}$.
\end{lemma}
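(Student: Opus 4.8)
The plan is to work entirely from the explicit solution formulas \eqref{eq:3.5}. First I would record why these formulas do solve $L_{n,\pm}g=f$ and why the choice of integration limit is forced. By variation of parameters, the homogeneous equation $L_{n,\pm}g=0$ has solution $g_{\text{hom}}=e^{\textrm{i}n\beta}\beta^{2\mu-1}W_n(\beta)^{\pm1}$ --- exactly as in the proof of Proposition~\ref{prop:3.1}, with $k=2\mu-1\pm a_n$ --- and writing $g=g_{\text{hom}}v$ reduces the inhomogeneous equation to $v'(b)=-f(b)\,e^{-\textrm{i}nb}b^{-2\mu}W_n(b)^{\mp1}$; one then integrates $v$ from whichever endpoint ($0$ or $\infty$) the prefactor $\beta^{2\mu-1}W_n^{\pm1}$ blows up at, and Lemma~\ref{lem:3.5} (which identifies $W_n$ as $\beta^{|n|\mu}$ near $0$ and as $C_n\beta^{\sqrt{n^2\mu^2-2\mu+1}}$ near $\infty$ with $C_n\in[1,2]$) shows this is precisely the case split $n=0$, $|n|=1$, $|n|\ge2$ appearing in \eqref{eq:3.5}. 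Since on $(0,\infty)$ the operator $L_{n,\pm}$ is first order with smooth nonvanishing leading coefficient, the resulting $g=L_{n,\pm}^{-1}f$ is automatically $C^1$ there, so continuity on $(0,\infty)$ is free and only the two endpoints need analysis.

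At $\beta\to0^{+}$ I would use $W_n(b)=b^{|n|\mu}$ on $[0,1]$, so that near the origin the weight in \eqref{eq:3.5} is an explicit power of $b$; splitting $f(b)=f(0)+(f(b)-f(0))$ and using continuity of $f$ at $0$, the $f(0)$ piece yields, after the powers of $\beta$ cancel exactly, the constant $(\pm|n|\mu+2\mu-1)^{-1}f(0)$, while the remainder is $o(1)$. This gives the first identity in \eqref{eq:3.6}. At $\beta\to\infty$, when $n=0$ the analogous splitting $f=f(\infty)+(f-f(\infty))$ against the weight $b^{-2\mu}$ yields the limit $(2\mu-1)^{-1}f(\infty)$. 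When $n\ne0$, bounding $|e^{\textrm{i}n(\beta-b)}|\le1$ only gives boundedness of $g$, not decay, so one must use the oscillation: subtracting the constant $f(\infty)$, the $f(\infty)$ part becomes a purely oscillatory integral with smooth amplitude which one integrates by parts once --- the amplitude's derivative being controlled via $\del_\beta W_n=(a_n/\beta)W_n$ and Lemma~\ref{lem:3.5} --- gaining an extra power $\langle\beta\rangle^{-1}$, while the remainder $f-f(\infty)$, small for $b$ large, is estimated in absolute value. Together these force $L_{n,\pm}^{-1}f(\infty)=0$, completing \eqref{eq:3.6} and the claim $L_{n,\pm}^{-1}f\in\mcal{C}$.

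For the bounds \eqref{eq:3.7} I would estimate the integrals in \eqref{eq:3.5} against Lemma~\ref{lem:3.5}. The sup-norm bound $\norm{L_{n,\pm}^{-1}f}_{\sup}\lesssim\norm{f}_{\sup}$ (resp.\ $\lesssim|n|^{-1}\norm{f}_{\sup}$ for $n\ne0$) follows by absolute values, the factor $|n|^{-1}$ coming out of the power integrals near $0$ (whose denominators are $\sim|n|\mu+2\mu-1$) and, for large $\beta$, from the steepness of $W_n$ (denominators $\sim\sqrt{n^2\mu^2-2\mu+1}+2\mu-1\sim|n|\mu$). For the Hölder seminorm one again subtracts $f(\infty)$ and uses $|f(b)-f(\infty)|\lesssim\langle b\rangle^{-\delta}\norm{f}_{\mcal{C}^\delta}$; feeding this into \eqref{eq:3.5} and combining with the integration-by-parts gain in the oscillatory case gives $\langle\beta\rangle^{\delta}\,|L_{n,\pm}^{-1}f(\beta)-L_{n,\pm}^{-1}f(\infty)|\lesssim\norm{f}_{\mcal{C}^\delta}$ (resp.\ $\lesssim|n|^{-1}\norm{f}_{\mcal{C}^\delta}$ for $n\ne0$). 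The standing hypothesis $\delta<\sqrt{4\mu^2-2\mu+1}-2\mu+1$ is precisely what keeps the relevant power integrals convergent with room to spare, uniformly in $|n|\ge2$; the cases $|n|=1$ and $n=0$ are handled separately, with the endpoint convergence there relying on $\mu>1$.

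The step I expect to be the main obstacle is the endpoint analysis at $\beta\to\infty$ together with the Hölder seminorm bound for $n\ne0$: the naive absolute-value estimates are off by a full power of $\beta$ and in particular do not detect that $L_{n,\pm}^{-1}f(\infty)=0$, so the oscillation $e^{\textrm{i}n(\beta-b)}$ must be exploited with some care and the $n$-dependence tracked uniformly across the regimes $|n|=1$, $|n|=2$, $|n|\ge3$, where the exponent $\sqrt{n^2\mu^2-2\mu+1}$ crosses the thresholds $2\mu-1$ and $2\mu$ and the tail integrals switch between absolutely and only conditionally convergent.
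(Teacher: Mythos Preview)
Your proposal is correct and follows essentially the same approach as the paper: work from the explicit formulas \eqref{eq:3.5}, split $f=f(\infty)+(f-f(\infty))$ and integrate by parts once on the oscillatory $f(\infty)$-piece to get the decay at $\beta\to\infty$ and the $|n|^{-1}$ gain, use the explicit power form of $W_n$ (Lemma~\ref{lem:3.5}) at each endpoint, and treat the regimes $n=0$, $|n|=1$, $|n|\ge2$ separately. The paper organizes this as seven explicit sub-cases (one per endpoint and per sign/range of $n$) and at $\beta\to0$ prefers the substitution $b=\beta s$ with dominated convergence over your $f(0)$-splitting, but these are cosmetic differences; your identification of the $\delta<\sqrt{4\mu^2-2\mu+1}-2\mu+1$ constraint as the condition making the $|n|=2$ tail integral convergent, and of $\mu>1$ as what makes the $|n|=1$ denominator $\mu-1$ positive, matches the paper exactly.
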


\begin{proof}
\textit{(i) Boundedness of $L_{0,\pm}^{-1}$}: We write
\[L_{0,\pm}^{-1} f(\beta) = \int_1^\infty \frac{f(\beta s)}{s^{2\mu}} \,ds.\]
Hence, $L_{0,\pm}^{-1} f(\infty) = \frac{1}{2\mu-1} f(\infty)$ by the dominated convergence theorem. Moreover, $\norm{L_{0,\pm}^{-1} f}_{\sup} \leq \frac{1}{2\mu-1} \norm{f}_{\sup}$ and
\[|L_{0,\pm}^{-1} f(\beta) - L_{0,\pm}^{-1} f(\infty)| = \bigg|\int_1^\infty \frac{f(\beta s) - f(\infty)}{s^{2\mu}} \,ds \bigg| \leq \beta^{-\delta} \frac{\sup_{b} b^\delta |f(b) - f(\infty)| }{2\mu -1+ \delta}.\]
Summing up,
\[\norm{L_{0,\pm}^{-1} f}_{\mcal{C}^\delta} \leq \frac{1}{2\mu - 1} \norm{f}_{\mcal{C}^\delta}.\]
\textit{(ii) $L_{n,+}^{-1} f$ is continuous at $\infty$ for any $n\neq 0$, $f\in \mcal{C}$ and $\underset{\beta \geq 2}{\sup} \,\beta^\delta |L_{n,+}^{-1} f(\beta)| \lesssim |n|^{-1} \norm{f}_{\mcal{C}^\delta}$ for any $n\neq 0$, $f\in\mcal{C}^\delta$}:
\[L_{n,+}^{-1} f(\beta) = \beta^{m - 1} \int_\beta^\infty \frac{1}{b^{m}} e^{\textrm{i}n (\beta - b)} f(b) \,db, \quad \beta \geq 2,\]
where $m = 2\mu + \sqrt{n^2\mu^2 - 2\mu +1}$. We write
\[L_{n,+}^{-1} f(\beta) = \beta^{m - 1} \int_\beta^\infty \frac{1}{b^{m}} e^{\textrm{i}n (\beta - b)} (f(b) - f(\infty)) \,db + f(\infty) \beta^{m-1} \int_\beta^\infty \frac{e^{\textrm{i}n(\beta - b)}}{b^m} \,db, \quad \beta \geq 2.\]
Note that
\[\abs{\int_\beta^\infty \frac{e^{-\textrm{i}n b}}{b^m} \,db} = \abs{\frac{1}{-\textrm{i}n \beta^m} + \frac{m}{\textrm{i}n} \int_\beta^\infty \frac{e^{\textrm{i}n (\beta - b)}}{b^{m+1}} \,db} \leq \frac{2}{|n| \beta^{m}}.\]
Thus, for any $f\in \mcal{C}[0,\infty]$,
\[|L_{n,+}^{-1} f(\beta)| \leq \frac{1}{m-1} \sup_{b>\beta} |f(b) - f(\infty)| + \frac{2|f(\infty)|}{|n|} \frac{1}{\beta} \to 0~~\text{as}~~\beta\to\infty.\]
And, for any $f\in \mcal{C}^\delta[0,\infty]$,
\[\beta^\delta |L_{n,+}^{-1} f(\beta)| \leq \frac{1}{m-1 + \delta} \norm{f}_{\mcal{C}^\delta} + \frac{2}{|n| \beta^{1 - \delta}} |f(\infty)| \lesssim \frac{1}{|n|} \norm{f}_{\mcal{C}^\delta}, \quad \beta \geq 2.\]
\textit{(iii) $L_{\pm 1,-}^{-1} f$ is continuous at $\infty$ for any $f\in \mcal{C}$ and $\underset{\beta \geq 2}{\sup} \,\beta^\delta |L_{\pm 1,-}^{-1} f(\beta)| \lesssim  \norm{f}_{\mcal{C}^\delta}$ for any $f\in \mcal{C}^\delta$}:
\[L_{\pm 1, -}^{-1} f(\beta) = \beta^\mu \int_\beta^\infty \frac{e^{\pm \textrm{i}(\beta - b)}}{b^{\mu + 1}} f(b) \,db, \quad \beta \geq 2.\]
It corresponds to $m = \mu + 1$ and $n = \pm 1$ in the previous case.

\noindent\textit{(iv) $L_{n,-}^{-1} f$ is continuous at $\infty$ for any $|n|\geq 2$, $f\in \mcal{C}$ and $\underset{\beta \geq 2}{\sup} \,\beta^\delta |L_{n,-}^{-1} f(\beta)| \lesssim |n|^{-1} \norm{f}_{\mcal{C}^\delta}$ for any $|n|\geq 2$, $f\in \mcal{C}^\delta$}: For $\beta \geq 2$,
\begin{align*}
L_{n,-}^{-1} f(\beta) &= - \frac{\beta^{2\mu - 1}}{W_n(\beta)} \int_0^\beta \frac{W_n(b)}{b^{2\mu}} e^{\textrm{i}n(\beta - b)} f(b) \,db \\
&= \frac{-e^{\textrm{i}n \beta}}{\beta^{m + 1}} \bigg( \frac{1}{C_n} \int_0^2 \frac{W_n(b)}{b^{2\mu}} e^{-\textrm{i}n b} f(b) \,db + \int_2^\beta b^{m} e^{-\textrm{i}n b} f(b) \,db \bigg),
\end{align*}
where $m = \sqrt{n^2\mu^2 - 2\mu + 1} - 2\mu$. First,
\[\abs{\int_0^2 \frac{W_n(b)}{b^{2\mu}} e^{-\textrm{i}n b} f(b) \,db} \leq \int_0^2 b^{|n|\mu - 2\mu} |f(b)| \,db \leq \frac{2^{|n|\mu - 2\mu + 1}}{|n|\mu - 2\mu + 1} \norm{f}_{\sup}.\]
Next,
\[ \abs{\int_2^\beta b^m e^{-\textrm{i}n b} \,db} = \abs{\frac{2^m}{\textrm{i}n}(e^{-\textrm{i}2n} - e^{-\textrm{i}n\beta}) +\frac{m}{\textrm{i}n} \int_2^\beta b^{m-1} (e^{-\textrm{i}n b} - e^{-\textrm{i}n\beta}) \, db} \leq \frac{4\beta^m}{|n|}.\]
Thus,
\begin{align*}
\frac{1}{\beta^{m+1}} \abs{\int_2^\beta b^m e^{-\textrm{i}n b} f(b) \,db} &\leq \frac{1}{\beta^{m+1}} \abs{\int_2^\beta b^m e^{-\textrm{i}n b} (f(b) - f(\infty)) \,db} + \frac{1}{\beta^{m+1}} \abs{\int_2^\beta b^m e^{-\textrm{i}n b} \,db} |f(\infty)| \\
& \leq \int_{0}^1 s^m |f(\beta s) - f(\infty)| \,ds + \frac{4}{|n| \beta} |f(\infty)|
\end{align*}
Given that $f$ is bounded and $\displaystyle \lim_{b\to \infty} f(b) = f(\infty)$, the integral in the last line converges to $0$ as $\beta \to \infty$, by dominated convergence theorem. Therefore, $\displaystyle\lim_{\beta \to \infty} L^{-1}_{n,-} f(\beta) = 0$ for any $f\in \mcal{C}$. Moreover, when $f\in \mcal{C}^\delta$,
\[ \frac{1}{\beta^{m+1}} \abs{\int_2^\beta b^m e^{-\textrm{i}n b} f(b) \,db} \leq \int_0^1 s^m (\beta s)^{-\delta} \norm{f}_{\mcal{C}^\delta} \,ds + \frac{4}{|n|\beta} \norm{f}_{\sup} \leq \beta^{-\delta} \bigg( \frac{1}{m+1-\delta} \norm{f}_{\mcal{C}^\delta} + \frac{2^{1+\delta}}{|n|} \norm{f}_{\sup} \bigg).\]
Notice that $m+1 = \sqrt{n^2 \mu^2 - 2\mu + 1} - 2\mu + 1 > \delta$. Summing up,
\[ \beta^\delta |L_{n,-}^{-1} f(\beta)| \leq \frac{2^{|n|\mu - \sqrt{n^2\mu^2 - 2\mu + 1}+\delta}}{|n|\mu - 2\mu + 1} \norm{f}_{\sup} + \frac{1}{m+1-\delta} \norm{f}_{\mcal{C}^\delta} + \frac{2^{1+\delta}}{|n|} \norm{f}_{\sup} \lesssim \frac{1}{|n|} \norm{f}_{\mcal{C}^\delta}.\]
\textit{(v) $L_{n,+}^{-1} f$ is continuous at $0$ and $\underset{\beta\in[0,2)}{\sup} |L_{n,+}^{-1} f(\beta)| \lesssim |n|^{-1} \norm{f}_{\sup}$ for any $n\neq 0$, $f\in \mcal{C}$}:\\
Recall Lemma \ref{lem:3.5} that $b^{\sqrt{n^2\mu^2 - 2\mu + 1}} \leq W_n(b) \leq 2 b^{\sqrt{n^2\mu^2 - 2\mu + 1}}$ for $b\geq 1$. For $\beta \in [1,2)$,
\[ |L_{n,+}^{-1} f(\beta)| \leq 2\beta^{\sqrt{n^2\mu^2 - 2\mu + 1} +2\mu -1} \int_\beta^\infty \frac{db}{b^{\sqrt{n^2 \mu^2 - 2\mu + 1}+2\mu }} \norm{f}_{\sup} \lesssim \frac{1}{|n|} \norm{f}_{\sup}.\]
For $\beta \in (0,1)$,
\[e^{-\textrm{i}n \beta} L_{n,+}^{-1} f(\beta) = \beta^{|n|\mu + 2\mu - 1} \bigg(\int_\beta^1 \frac{e^{-\textrm{i}n b} f(b)}{b^{|n|\mu + 2\mu}} \,db  + \int_1^\infty \frac{e^{-\textrm{i}n b} f(b)}{b^{2\mu} W_n(b)} \,db \bigg).\]
The second integral is bounded by $(\sqrt{n^2 \mu^2 - 2\mu + 1}+2\mu -1)^{-1} \norm{f}_{\sup}$. Hence, for $\beta\in (0,1)$,
\[e^{-\textrm{i}n \beta} L_{n,+}^{-1} f(\beta) = \int_1^{\beta^{-1}} s^{-|n|\mu - 2\mu} e^{-\textrm{i}n \beta s} f(\beta s) \,ds + |n|^{-1} O(\beta^{|n|\mu + 2\mu - 1} \norm{f}_{\sup}).\]
Thus, for $\beta\in (0,1)$,
\[|L_{n,+}^{-1} f(\beta)| \lesssim \frac{1}{|n|} \norm{f}_{\sup}.\]
Moreover, for any $f\in\mcal{C}[0,\infty]$,
\[\lim_{\beta \to 0+} L_{n,+}^{-1} f(\beta) = \lim_{\beta \to 0+} \int_1^{\beta^{-1}} s^{-|n|\mu - 2\mu} e^{-\textrm{i}n \beta s} f(\beta s) \,ds = \frac{1}{|n|\mu + 2\mu - 1} f(0).\]
\textit{(vi) $L_{\pm 1, -}^{-1} f$ is continuous at $0$ and $\underset{\beta\in[0,2)}{\sup} |L_{\pm 1,-}^{-1} f(\beta)| \lesssim \norm{f}_{\sup}$ for any $f\in\mcal{C}$}: For $\beta \in [1,2)$,
\[|L_{\pm 1, -}^{-1} f(\beta)| \leq 2\beta^\mu \int_\beta^\infty \frac{db}{b^{\mu + 1}} \norm{f}_{\sup} \lesssim \norm{f}_{\sup}.\]
For $\beta \in (0,1)$,
\[e^{\mp \textrm{i}\beta} L_{\pm 1, -}^{-1} f(\beta) = \beta^{\mu - 1} \bigg( \int_\beta^1 \frac{e^{\mp\textrm{i}b} f(b)}{b^\mu} \,db + \int_1^\infty \frac{W_1(b) e^{\mp\textrm{i}b} f(b)}{b^{2\mu}} \,db \bigg).\]
By the same argument as the previous case, we get for any $f\in \mcal{C}[0,\infty]$, $|L_{\pm 1, -}^{-1} f(\beta)| \lesssim \norm{f}_{\sup}$ for $\beta \in (0,1)$ and
\[\lim_{\beta \to 0+} L_{\pm 1, -}^{-1} f(\beta) = \frac{1}{\mu - 1} f(0).\]
\textit{(vii) $L_{n,-}^{-1} f$ is continuous at $0$ and $\underset{\beta\in[0,2)}{\sup} |L^{-1}_{n,-} f(\beta)| \lesssim |n|^{-1} \norm{f}_{\sup}$ for any $|n|\geq 2$, $f\in\mcal{C}$}:\\
Recall Lemma \ref{lem:3.5} that $\frac{1}{2} b^{|n|\mu} \leq W_n(b) \leq b^{|n|\mu}$ for all $b\leq 2$. For $\beta \in (0,2)$,
\[|L_{n,-}^{-1} f(\beta)| \leq 2\beta^{-|n|\mu + 2\mu - 1} \int_0^\beta b^{|n|\mu - 2\mu} \,db \, \norm{f}_{\sup} \lesssim \frac{1}{|n|} \norm{f}_{\sup}.\]
For $\beta \in (0,1)$,
\[e^{-\textrm{i}n\beta} L_{n,-}^{-1} f(\beta) = - \frac{1}{\beta^{|n|\mu - 2\mu + 1}} \int_0^\beta b^{|n|\mu - 2\mu} e^{-\textrm{i}n b} f(b) \,db = - \int_0^1 s^{|n|\mu - 2\mu} e^{-\textrm{i}n\beta s} f(\beta s) \,ds.\]
Thus, for any $f\in\mcal{C}[0,\infty]$,
\[\lim_{\beta \to 0+} L_{n,-}^{-1} f(\beta) = - \lim_{\beta \to 0+} \int_0^1 s^{|n|\mu - 2\mu} e^{-\textrm{i}n\beta s} f(\beta s) \,ds = - \frac{1}{|n|\mu - 2\mu + 1} f(0).\]
Summing up, we prove that $\norm{L_{n,\pm}^{-1}}_{\mcal{C}^\delta, \mcal{C}^\delta_0} \lesssim |n|^{-1}$ for $n\neq 0$.
\end{proof}

The following estimates are immediate consequences of Lemma \ref{lem:3.6}.

\begin{prop} \label{prop:3.7}
$\norm{\text{id}}_{\tilde{Y}, V} \lesssim 1$, $\norm{\del_\phi}_{\tilde{Y}, V} \lesssim 1$, and $\norm{D_\rho}_{\tilde{Y}, V} \lesssim 1$.
\end{prop}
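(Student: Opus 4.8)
The plan is to diagonalize in the Fourier variable $\phi$ and reduce the three operator bounds to uniform-in-$n$ estimates summed against the weight $\langle n\rangle^{\frac{1}{2}}$. Given $f\in\tilde{Y}$, write $v = L_-f\in V$, so $\hat{f}_n = L_{n,-}^{-1}\hat{v}_n$ with $\hat{v}_n\in V_n$ and $\norm{f}_{\tilde{Y}} = \norm{v}_V = \sum_{n\in\Z}\langle n\rangle^{\frac{1}{2}}\norm{\hat{v}_n}_{V_n}$. Since $\widehat{\del_\phi f}_n = \textrm{i}n\hat{f}_n$ and $\widehat{D_\rho f}_n = \hat{D}_\rho\hat{f}_n$ with $\hat{D}_\rho = -D_\beta + \textrm{i}n\beta$, it suffices to show
\[\norm{\hat{f}_n}_{V_n} + \norm{\textrm{i}n\hat{f}_n}_{V_n} + \norm{\hat{D}_\rho\hat{f}_n}_{V_n}\lesssim\norm{\hat{v}_n}_{V_n}\]
uniformly in $n\in\Z$, with each output lying in the appropriate $V_n$.

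For $\text{id}$ and $\del_\phi$ I would apply \eqref{eq:3.7}: $\norm{L_{0,-}^{-1}}_{\mcal{C}^\delta,\mcal{C}^\delta}\lesssim 1$ and $\norm{L_{n,-}^{-1}}_{\mcal{C}^\delta,\mcal{C}^\delta_0}\lesssim|n|^{-1}$ for $n\neq 0$. Since the $V_n$ carry the $\mcal{C}^\delta$-norm, this gives $\norm{\hat{f}_n}_{V_n}\lesssim|n|^{-1}\norm{\hat{v}_n}_{V_n}$ for $n\neq 0$ and $\norm{\hat{f}_0}_{V_0}\lesssim\norm{\hat{v}_0}_{V_0}$; in particular $\norm{\hat{f}_n}_{V_n}\lesssim\norm{\hat{v}_n}_{V_n}$ (summing to the $\text{id}$ bound) and $\norm{\textrm{i}n\hat{f}_n}_{V_n} = |n|\norm{\hat{f}_n}_{V_n}\lesssim\norm{\hat{v}_n}_{V_n}$ with the $n=0$ term vanishing (summing to the $\del_\phi$ bound).

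For $D_\rho$ I would use $L_{n,-} = \hat{D}_\rho + 2\mu - 1 - a_n(\beta)$, so that $L_{n,-}\hat{f}_n = \hat{v}_n$ rearranges to $\hat{D}_\rho\hat{f}_n = \hat{v}_n - (2\mu - 1)\hat{f}_n + a_n\hat{f}_n$. For $n\neq 0$, Lemma \ref{lem:3.4} ($\norm{a_n}_{\mcal{C}^\delta}\lesssim|n|$) together with the algebra property $\mcal{C}^\delta_0\cdot\mcal{C}^\delta\hookrightarrow\mcal{C}^\delta_0$ of Proposition \ref{prop:3.3}(a) gives $\norm{a_n\hat{f}_n}_{V_n}\lesssim|n|\cdot|n|^{-1}\norm{\hat{v}_n}_{V_n}$, and with $\norm{(2\mu - 1)\hat{f}_n}_{V_n}\lesssim\norm{\hat{v}_n}_{V_n}$ this yields $\norm{\hat{D}_\rho\hat{f}_n}_{V_n}\lesssim\norm{\hat{v}_n}_{V_n}$; moreover $\hat{v}_n(\infty) = \hat{f}_n(\infty) = 0$ by Lemma \ref{lem:3.6}, so $\hat{D}_\rho\hat{f}_n$ vanishes at $\infty$ and lies in $V_n = \mcal{C}^\delta_0$. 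For $n=0$, $a_0\equiv 0$ gives $\hat{D}_\rho\hat{f}_0 = -D_\beta\hat{f}_0 = \hat{v}_0 - (2\mu - 1)\hat{f}_0\in\mcal{C}^\delta = V_0$ with norm $\lesssim\norm{\hat{v}_0}_{V_0}$. Summing against $\langle n\rangle^{\frac{1}{2}}$ finishes the $D_\rho$ bound.

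As the paper indicates, this is immediate from Lemma \ref{lem:3.6}, so there is no real obstacle; two small points need attention. The scaling is tight — the $|n|^{-1}$ gain in $\norm{L_{n,-}^{-1}}_{\mcal{C}^\delta,\mcal{C}^\delta_0}$ exactly offsets the $|n|$ growth of the symbol $\textrm{i}n$ of $\del_\phi$ and of $a_n$ — and the $n=0$ mode must be handled separately since there $V_0 = \mcal{C}^\delta$ and $a_0\equiv 0$, so that $D_\rho$ reduces to $-D_\beta$; one must also confirm that for $n\neq 0$ the outputs truly land in the vanishing-at-$\infty$ spaces $V_n$, which is exactly the content of the boundary values in Lemma \ref{lem:3.6}.
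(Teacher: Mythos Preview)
Your proof is correct and follows essentially the same approach as the paper: both diagonalize in the $\phi$-Fourier variable, use Lemma~\ref{lem:3.6} for the $|n|^{-1}$ gain from $L_{n,-}^{-1}$, and for $D_\rho$ rewrite $\hat{D}_\rho L_{n,-}^{-1}$ via the identity $L_{n,-} = \hat{D}_\rho + 2\mu - 1 - a_n$ so that the $|n|$ growth of $a_n$ (Lemma~\ref{lem:3.4}) is exactly offset. Your write-up is in fact slightly more careful than the paper's, as you explicitly verify that the outputs land in $V_n = \mcal{C}^\delta_0$ for $n\neq 0$.
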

\begin{proof}
Recall that $\del_\phi$ and $D_\rho$ in the $n$-th Fourier mode correspond to $\textrm{i}n$ and $-D_\beta + \textrm{i}n \beta$, respectively.

\noindent\textit{(i) $0$-th mode}: $\norm{\text{id}}_{\tilde{Y}_0, V_0} = \norm{L_{0,-}^{-1}}_{\mcal{C}^\delta, \mcal{C}^\delta} \lesssim 1$, $\norm{-D_\beta}_{\tilde{Y}_0, V_0} = \norm{-D_\beta L_{0,-}^{-1}}_{\mcal{C}^\delta, \mcal{C}^\delta} = \norm{\text{id} - (2\mu - 1) L_{0,-}^{-1}}_{\mcal{C}^\delta, \mcal{C}^\delta} \lesssim 1$.

\noindent\textit{(ii) $n$-th mode, $n\neq 0$}: $\norm{\text{id}}_{\tilde{Y}_n, V_n} = \norm{L_{n,-}^{-1}}_{\mcal{C}^\delta_0, \mcal{C}^\delta_0} \lesssim |n|^{-1}$.
\[\norm{-D_\beta + \textrm{i}n\beta}_{\tilde{Y}_n, V_n} = \norm{(-D_\beta + \textrm{i}n\beta) L_{n,-}^{-1}}_{\mcal{C}^\delta_0, \mcal{C}^\delta_0} = \norm{\text{id} + (a_n(\beta) - 2\mu + 1) \circ L_{n,-}^{-1}}_{\mcal{C}^\delta_0, \mcal{C}^\delta_0} \lesssim 1 + |n| \cdot |n|^{-1} \lesssim 1. \qedhere\]
\end{proof}

\begin{prop} \label{prop:3.8}
$\norm{\text{id}}_{\tilde{V}, Z} \lesssim 1$, $\norm{\text{id}}_{\mcal{A}^{-\frac{1}{2}} \mcal{C}^\delta, Z} \lesssim 1$, $\norm{\del_\phi}_{\tilde{V}, Z} \lesssim 1$, and $\norm{D_\rho}_{V, Z} \lesssim 1$.
\end{prop}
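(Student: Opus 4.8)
The plan is to take the Fourier transform in $\phi$ and reduce each of the four bounds to a uniform-in-$n$ estimate for a composition involving $L_{n,+}^{-1}$. Since $Z=\mcal{A}^{\frac12}Z_n$ with $\norm{w}_{Z_n}=\norm{L_{n,+}^{-1}w}_{V_n}$, and since $\del_\phi$, $D_\rho$ act on the $n$-th mode as $\textrm{i}n$ and $\hat{D}_\rho=-D_\beta+\textrm{i}n\beta$, the four claims are equivalent (uniformly in $n\in\Z$) to: (i) $\norm{L_{n,+}^{-1}}_{\mcal{C}^\delta,V_n}\lesssim 1$; (ii) $\norm{L_{n,+}^{-1}}_{\mcal{C}^\delta,V_n}\lesssim\langle n\rangle^{-1}$; (iii) $|n|\,\norm{L_{n,+}^{-1}}_{\mcal{C}^\delta,V_n}\lesssim 1$; and (iv) $\norm{L_{n,+}^{-1}\hat{D}_\rho}_{V_n,V_n}\lesssim 1$, where $V_0=\mcal{C}^\delta$ and $V_n=\mcal{C}^\delta_0$ for $n\neq0$. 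The well-definedness of the three embeddings into $Z$ is part of this reduction: by Lemma~\ref{lem:3.6}, $g\in\mcal{C}^\delta$ implies $L_{n,+}^{-1}g\in\mcal{C}^\delta$ with $L_{n,+}^{-1}g(\infty)=0$ when $n\neq0$, hence $L_{n,+}^{-1}g\in V_n$ and $g\in Z_n$.

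For (i), (ii), (iii) I would simply quote estimate~\eqref{eq:3.7} of Lemma~\ref{lem:3.6}, namely $\norm{L_{0,+}^{-1}}_{\mcal{C}^\delta,\mcal{C}^\delta}\lesssim1$ and $\norm{L_{n,+}^{-1}}_{\mcal{C}^\delta,\mcal{C}^\delta_0}\lesssim|n|^{-1}$ for $n\neq0$. Then (i) is immediate; (ii) follows because $|n|^{-1}\lesssim\langle n\rangle^{-1}$ for $|n|\geq1$ while $\langle0\rangle^{-1}=1$ covers the zero mode; and (iii) follows because $|n|\cdot|n|^{-1}=1$ while the zero mode of $\del_\phi$ is trivial. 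Summing $\sum_n\langle n\rangle^{\pm\frac12}(\cdots)$ against $\norm{\hat{f}_n}_{\mcal{C}^\delta}$ then gives the three operator bounds from $\tilde{V}$ and $\mcal{A}^{-\frac12}\mcal{C}^\delta$ into $Z$.

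Claim (iv) is the only one requiring a short computation, since bounding $\hat{D}_\rho\hat{f}_n$ directly loses a factor $|n|$. Here I factor through $L_{n,+}$: writing $L_{n,+}=-D_\beta+\textrm{i}n\beta+2\mu-1+a_n(\beta)$ gives $\hat{D}_\rho=L_{n,+}-(2\mu-1)-a_n(\beta)$, hence
\[L_{n,+}^{-1}\hat{D}_\rho=\text{id}-(2\mu-1)L_{n,+}^{-1}-L_{n,+}^{-1}\circ a_n,\]
where $a_n$ also denotes multiplication by $a_n(\beta)$ (this identity is legitimate exactly as in the proof of Proposition~\ref{prop:3.7}, since both sides agree upon applying $L_{n,+}$). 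On $V_n$ the first summand has norm $1$; the second has norm $\lesssim|n|^{-1}\lesssim1$ for $n\neq0$ and $\lesssim1$ for $n=0$ by \eqref{eq:3.7}; for the third, multiplication by $a_n$ is bounded on $\mcal{C}^\delta$ with operator norm $\lesssim\norm{a_n}_{\mcal{C}^\delta}\lesssim|n|$ by Lemma~\ref{lem:3.4} (and $a_0\equiv0$), so composing with $L_{n,+}^{-1}$ (norm $\lesssim|n|^{-1}$) yields norm $\lesssim1$; note $L_{n,+}^{-1}$ maps $\mcal{C}^\delta$ into $\mcal{C}^\delta_0=V_n$, so the image stays in $V_n$. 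Thus $\norm{L_{n,+}^{-1}\hat{D}_\rho}_{V_n,V_n}\lesssim1$ uniformly, and summing over $n$ against $\norm{f}_V=\sum_n\langle n\rangle^{\frac12}\norm{\hat{f}_n}_{V_n}$ gives $\norm{D_\rho}_{V,Z}\lesssim1$.

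In short, I expect no genuine obstacle: the proposition is bookkeeping on top of Lemmas~\ref{lem:3.4} and~\ref{lem:3.6}, and the only delicate point is the cancellation in (iv), which is the same mechanism already used for $\norm{-D_\beta+\textrm{i}n\beta}_{\tilde{Y}_n,V_n}$ in Proposition~\ref{prop:3.7}.
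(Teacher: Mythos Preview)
Your proposal is correct and follows essentially the same route as the paper: reduce to mode-by-mode estimates, quote $\norm{L_{n,+}^{-1}}_{\mcal{C}^\delta,\mcal{C}^\delta_0}\lesssim|n|^{-1}$ from Lemma~\ref{lem:3.6} for the three embedding bounds, and for $D_\rho$ write $\hat{D}_\rho=L_{n,+}-(2\mu-1)-a_n$ so that $L_{n,+}^{-1}\hat{D}_\rho=\text{id}-L_{n,+}^{-1}\circ(a_n+2\mu-1)$, using Lemma~\ref{lem:3.4} for $\norm{a_n}_{\mcal{C}^\delta}\lesssim|n|$. This is exactly the paper's argument, and your observation that it mirrors the mechanism in Proposition~\ref{prop:3.7} is apt.
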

\begin{proof}
\noindent\textit{(i) $0$-th mode}: $\norm{\text{id}}_{\tilde{V}_0, Z_0} = \norm{L_{0,+}^{-1}}_{\mcal{C}^\delta, \mcal{C}^\delta} \lesssim 1$ and \\$\norm{-D_\beta}_{V_0, Z_0} = \norm{-D_\beta L_{0,+}^{-1}}_{\mcal{C}^\delta, \mcal{C}^\delta} = \norm{\text{id} - (2\mu - 1) L_{0,+}^{-1}}_{\mcal{C}^\delta, \mcal{C}^\delta} \lesssim 1$.

\noindent\textit{(ii) $n$-th mode, $n\neq 0$}: $\norm{\text{id}}_{\tilde{V}_n, Z_n} = \norm{L_{n,+}^{-1}}_{\mcal{C}^\delta, \mcal{C}^\delta_0} \lesssim |n|^{-1}$ and
\[\norm{-D_\beta + \textrm{i}n\beta}_{V_n, Z_n} = \norm{L_{n,+}^{-1}(-D_\beta + \textrm{i}n\beta)}_{\mcal{C}^\delta_0, \mcal{C}^\delta_0} = \norm{\text{id} - L_{n,+}^{-1} \circ (a_n(\beta) + 2\mu - 1)}_{\mcal{C}^\delta_0, \mcal{C}^\delta_0} \lesssim 1 + |n|^{-1} \cdot |n| \lesssim 1. \qedhere\]
\end{proof}

\section{Invertibility of Linearized Operator} \label{sec:4}
In this section, we prove the following theorem.
\begin{theorem} \label{thm:4.1}
Suppose $\mu > 1$. For any $w \in Z$, there is a unique solution $f \in Y$ to the linearized problem $\mcal{L}(f) = w$. Moreover, we have the estimate
  \begin{equation*}
    \norm{f}_Y \lesssim \norm{w}_Z.
  \end{equation*}
\end{theorem}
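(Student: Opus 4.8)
The plan is to diagonalize $\mcal{L}$ in the angular Fourier variable and prove a uniform-in-$n$ invertibility statement for the one-dimensional operators $\mcal{L}_n$ of \eqref{eq:3.2}. By Definition~\ref{def:3.2} one has $\norm{w}_Z = \sum_{n}\langle n\rangle^{\frac12}\norm{L_{n,+}^{-1}\hat w_n}_{V_n}$ and $\norm{f}_Y = \sum_{n}\langle n\rangle^{\frac12}\norm{L_{n,-}(D_\beta+1)\hat f_n}_{V_n}$, with the \emph{same} weight $\langle n\rangle^{\frac12}$ on both sides; hence it suffices to solve $\mcal{L}_n\hat f_n = \hat w_n$ together with an estimate $\norm{L_{n,-}(D_\beta+1)\hat f_n}_{V_n} \lesssim \norm{L_{n,+}^{-1}\hat w_n}_{V_n}$ whose constant is independent of $n$, and then to sum the modes. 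The mode $n=0$ is immediate, since there $\mcal{L}_0 = L_{0,+}L_{0,-}(D_\beta+1)$, so $\hat f_0 = (D_\beta+1)^{-1}L_{0,-}^{-1}L_{0,+}^{-1}\hat w_0 \in Y_0$ and $\norm{\hat f_0}_{Y_0} = \norm{L_{0,+}^{-1}\hat w_0}_{V_0} = \norm{\hat w_0}_{Z_0}$.

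For $n\neq 0$ I would first record the factorization that motivated the function spaces: the relations \eqref{eq:3.3}--\eqref{eq:3.4}, together with the commutator identity $[\hat D_\rho, a_n] = -D_\beta a_n$, give precisely
\[\mcal{L}^{\text{main}}_n = (\hat D_\rho + 2\mu-1)^2 - \big(n^2\mu^2 - (2\mu-1)\chi_n\big) = L_{n,+}L_{n,-},\]
so $(\mcal{L}^{\text{main}}_n)^{-1} = L_{n,-}^{-1}L_{n,+}^{-1}$ is bounded with the $|n|$-decay rates recorded in Lemma~\ref{lem:3.6}. Next, with $\hat g_n = (D_\beta+1)\hat f_n$ and the identity $\textrm{i}n\beta\hat f_n = \hat g_n + (\hat D_\rho - 1)\hat f_n$, equation \eqref{eq:3.2} rearranges into $\mcal{L}_n\hat f_n = \mcal{L}^{\text{main}}_n\hat g_n + \mcal{P}_n\hat f_n$ with perturbation $\mcal{P}_n\hat f_n = (2\mu-1)(1-\chi_n)\hat g_n + (2\mu-1)(\hat D_\rho - 1)\hat f_n$. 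Composing $\mcal{L}_n\hat f_n = \hat w_n$ with $L_{n,+}^{-1}$ and taking $L_{n,-}\hat g_n \in V_n$ as the unknown converts the problem into a second-kind equation $(\mathrm{id}+K_n)\big(L_{n,-}\hat g_n\big) = L_{n,+}^{-1}\hat w_n$ on $V_n$, where $K_n$ is the composition of $L_{n,+}^{-1}$, the perturbation $\mcal{P}_n$, and the reconstruction $\hat f_n = (D_\beta+1)^{-1}L_{n,-}^{-1}(\,\cdot\,)$; Lemma~\ref{lem:3.6} and Propositions~\ref{prop:3.7}--\ref{prop:3.8} are the tools for estimating the pieces of $K_n$.

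The heart of the argument -- and the place where this decomposition improves on that of \cite{Ell16} -- is that $K_n$ is a \emph{compact} operator on $V_n = \mcal{C}^\delta_0[0,\infty]$. The role of the cutoff $\chi_n$ is exactly to arrange this: it localizes the non-decaying part of $\mcal{P}_n$ to $\beta\lesssim 1$, while for $\beta\gtrsim 1$ the perturbation is of lower order relative to $\mcal{L}^{\text{main}}_n$, which there behaves like $-n^2\beta^2$, so that $\mcal{P}_n$ composed with the resolvent of $\mcal{L}^{\text{main}}_n$ acquires an extra factor decaying at infinity relative to the identity. Combined with the smoothing properties of the integral operators $L_{n,\pm}^{-1}$ from \eqref{eq:3.5} and the uniform bounds of Lemma~\ref{lem:3.6}, an Arzel\`a--Ascoli argument on the two-point compactification $[0,\infty]$ then yields the compactness of $K_n$. (In Elling's decomposition the analogous perturbation -- the last term of \eqref{eq:3.2} -- does not decay at infinity, and no such gain is available.) Granting compactness, the Fredholm alternative on $V_n$ reduces invertibility of $\mathrm{id}+K_n$, hence of $\mcal{L}_n : Y_n \to Z_n$, to injectivity. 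To show $\mcal{L}_n$ has trivial kernel in $Y_n$, I would study the third-order ODE $\mcal{L}_n\hat f_n = 0$ through its indicial roots at $\beta=0$ and $\beta=\infty$, in the spirit of Proposition~\ref{prop:3.1}: at each endpoint one counts the solutions admissible in the relevant $\mcal{C}^\delta_0$-type space and checks that the admissible solution spaces at the two ends meet only in $\{0\}$. This is the step that uses $\mu>1$ -- it controls the sign and size of the relevant exponents, as in the choice of $\delta$ -- and it also uses $\hat g_n = (D_\beta+1)\hat f_n$, so that a kernel element, being determined by $L_{n,-}\hat g_n$, cannot conceal a spurious $\beta^{-1}$ mode.

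Finally I would make the bound uniform in $n$. For $|n|$ large, $\norm{L_{n,\pm}^{-1}}\lesssim |n|^{-1}$ by Lemma~\ref{lem:3.6}, and the relative-size estimate underlying the compactness claim in fact gives $\norm{K_n}_{V_n\to V_n}\to 0$ as $|n|\to\infty$, so $\mathrm{id}+K_n$ is invertible by a Neumann series with a bound uniform for $|n|\geq N_0$. The finitely many remaining modes $1\le |n|<N_0$ are each invertible by the Fredholm step, contributing finitely many constants; the maximum is a single constant valid for all $n$. Summing over $n$ against $\langle n\rangle^{\frac12}$ then gives $\norm{f}_Y\lesssim\norm{w}_Z$, and uniqueness follows from injectivity of each $\mcal{L}_n$. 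I expect the compactness claim of the third paragraph to be the main obstacle: it is the structural reason the argument closes for \emph{all} $n$ rather than only for large $n$, and verifying it rests on the delicate resolvent estimates built from \eqref{eq:3.5} and Lemma~\ref{lem:3.6}.
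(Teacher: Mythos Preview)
Your strategy coincides with the paper's: factor $\mcal{L}^{\text{main}}_n=L_{n,+}L_{n,-}$, reduce $\mcal{L}_n\hat f_n=\hat w_n$ to a second-kind equation $(\mathrm{id}+K_n)u=v$ on $V_n$, prove compactness of $K_n$ (the paper's Lemma~\ref{lem:4.8}) and combine with injectivity via the Fredholm alternative, then obtain uniformity in $n$ by a Neumann series for large $|n|$ plus finitely many remaining modes. Your $K_n$ in fact agrees with the paper's $\mcal{K}_n$ once one expands $(\hat D_\rho-1)(D_\beta+1)^{-1}=-\mathrm{id}+\textrm{i}n\beta(D_\beta+1)^{-1}$; the paper merely regroups the pieces into $\mcal{K}_{n,1},\mcal{K}_{n,2},\mcal{K}_{n,3}$ so that each term's compactness can be read off directly from localization or extra $\beta$-decay.

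The one place your sketch is too optimistic is injectivity. You propose indicial-root analysis ``at $\beta=0$ and $\beta=\infty$'', but $\beta=\infty$ is an \emph{irregular} singular point of the ODE \eqref{eq:4.2} (rank one, from the $\textrm{i}n\beta$ term), so Frobenius does not apply there and a simple indicial count is unavailable. The paper handles $\beta\to 0$ by Frobenius (Lemma~\ref{lem:4.3}) and then, for the candidates that survive, identifies $\mfrak{v}\upindex{1}$ explicitly as a generalized hypergeometric ${}_2F_2$ and invokes its known large-argument asymptotics to exclude it at $\infty$. The delicate case is $|n|=1$: there \emph{two} solutions survive the $\beta\to 0$ test, and ruling them out at $\infty$ requires both the hypergeometric asymptotic $\mfrak{v}\upindex{1}\sim\beta^{3\mu-3}$ and the closed form $\mfrak{v}\upindex{2}=\beta^{\mu-1}$ --- this is exactly where $\mu>1$ enters, and why the kernel is nontrivial for $\tfrac12<\mu<1$. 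Your subspace-intersection picture is the right frame, but the $\beta\to\infty$ half needs this special-function input rather than indicial exponents.
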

The linearized equation is given by
\begin{equation} \label{eq:4.1}
\mcal{L} f = (D_\rho + 2\mu - 1)^2 g + (\mu\del_\phi)^2 g + (2\mu - 1) \beta \del_\phi f = w, \quad g = (D_\beta + 1) f.
\end{equation}
In section \ref{subsec:4.1}, we first establish the injectivity of the linearized operator. We then prove its invertibility using the Fredholm alternative. In section~\ref{subsec:4.3}, we decompose the operator into a sum of a main part and a relatively compact perturbation.
\begin{remark}
The Fredholm alternative argument was used in this problem by Shao-Wei-Zhang \cite{SWZ25}, where they inverted the linearized operator for $\mu > \frac{1}{2}$, except $\pm 1$ modes. The $\pm 1$ modes were excluded mainly due to two issues: the injectivity of the operator and the invertibility of the main part. We circumvent those difficulties by considering different regimes of parameter $\mu$, function spaces, and decomposition of the linearized operator. Specifically, we consider the case $\mu > 1$ in which $\mcal{L}$ has a trivial kernel. Similar computations for the kernel were carried out in \cite{SWZ25} for the case $\mu > \frac{1}{2}$ and modes $|n|\neq 1$ case. Additionally, the invertibility of the main part is a tautology from our function space setting. We note that \cite{GG24,SWZ25} used variants of Hölder space while we use Wiener-type spaces for $Y$ and $Z$.
\end{remark}

\subsection{Uniqueness of the solution} \label{subsec:4.1}
We prove the uniqueness of the solution to the linearized problem \eqref{eq:4.1} in a function space larger than $Y$. Using the identity $f(\beta,\phi) = \frac{1}{\beta} \int_0^\beta g(b, \phi) \,db$, we obtain that $\norm{f}_{\sup} \leq \norm{g}_{\sup} \leq \sum_{n} \norm{\hat{g}_n}_{\sup} \leq \norm{g}_{V} \lesssim \norm{g}_{\tilde{Y}} = \norm{f}_{Y}$. Thus, it suffices to show the following result:

\begin{prop} \label{prop:4.2}
 Suppose $\mu > 1$. If a bounded function $f(\beta,\phi)$ satisfies $\mcal{L}(f) = 0$, then $f \equiv 0$.
\end{prop}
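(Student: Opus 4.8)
The plan is to diagonalise the problem in the angular variable and then argue mode by mode. Writing $\hat f_n(\beta)=\tfrac1{2\pi}\int_{\mbb T}f(\beta,\phi)e^{-\textrm in\phi}\,d\phi$, the hypothesis that $f$ is bounded on $[0,\infty)\times\mbb T$ gives $|\hat f_n(\beta)|\le\norm{f}_{\sup}$ for every $n\in\Z$, and $\hat f_n$ solves the one–dimensional equation $\mcal L_n\hat f_n=0$ of \eqref{eq:3.2}; it thus suffices to show $\hat f_n\equiv 0$ for each $n$. For $n=0$ the equation is $\mcal L_0\hat f_0=(-D_\beta+2\mu-1)^2(D_\beta+1)\hat f_0=0$, which is equidimensional: on $(0,\infty)$, where the coefficients are smooth, a bounded distributional solution is classical, hence a linear combination of $\beta^{-1}$, $\beta^{2\mu-1}$ and $\beta^{2\mu-1}\log\beta$ (the indicial exponent of $D_\beta+1$ and the double exponent of $-D_\beta+2\mu-1$). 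Since $\mu>1$, the first blows up at $\beta=0$ and the remaining two blow up at $\beta=\infty$, so $\hat f_0\equiv 0$.

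The modes $n\ne 0$ are the heart of the matter; here $\mcal L_n\hat f_n=0$ is a genuinely non-equidimensional third–order ODE with a regular singular point at $\beta=0$ and an irregular one at $\beta=\infty$, and I would proceed in three steps. \emph{(i) Indicial analysis at $\beta=0$.} There $a_n\equiv|n|\mu$, $\chi_n\equiv0$, and the indicial exponents are $-1$ and $2\mu-1\pm|n|\mu$; for $\mu>1$ the only exponent besides $-1$ that can be $\le0$ is $2\mu-1-|n|\mu$ with $|n|\ge2$ (equal to $-1$ when $|n|=2$, which merely produces a logarithmic pairing $\beta^{-1}\log\beta$), so every solution bounded near $\beta=0$ is a combination of strictly-positive-exponent modes and therefore \emph{vanishes} as $\beta\to0^{+}$; the borderline exponent $2\mu-1-\mu=\mu-1$ is positive exactly because $\mu>1$. \emph{(ii) Asymptotics at $\beta=\infty$.} There $a_n$ and $\chi_n$ are constant, and the standard analysis of the irregular singular point shows that the only solutions failing to decay grow, up to an $e^{\textrm in\beta}$ factor, at least like $\beta^{2\mu-1-a_n(\infty)}$ (when this exponent is positive, which happens only for $|n|=1$) and like $\beta^{2\mu-1+a_n(\infty)}$ in every case; since $2\mu-1+a_n(\infty)>0$, a bounded solution avoids all growing directions and hence \emph{decays} as $\beta\to\infty$. \emph{(iii) Energy identity.} By (i)–(ii) a bounded solution $\hat f_n$ decays at both ends of $(0,\infty)$; one then tests $\mcal L_n\hat f_n=0$ against a suitable conjugate multiplier built from $\hat f_n$ and $\hat g_n=(D_\beta+1)\hat f_n$ (possibly with a power weight in $\beta$) and integrates over $(0,\infty)$, integrating by parts using that $\hat D_\rho=-D_\beta+\textrm in\beta$ is skew-adjoint for the scale-invariant measure $d\beta/\beta$ while real scalars are self-adjoint. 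The decay kills all boundary terms, and for $\mu>1$ the resulting identity is coercive: its real part is a sum of non-negative quantities with coefficients such as $2\mu-1$ and $n^2\mu^2-(2\mu-1)=(|n|^2-1)\mu^2+(\mu-1)^2$ that are strictly positive, controlling $\hat f_n$ in $L^2(d\beta/\beta)$. Hence $\hat f_n\equiv0$.

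The main obstacle is step (iii): one must choose the multiplier so that the quadratic form coming out of the integration by parts is manifestly coercive — the cross terms produced by the first–order coupling $\textrm in\beta$ (best handled by rewriting $\textrm in\beta\hat f_n=(D_\beta+\hat D_\rho)\hat f_n$) and by the mismatch between $f$ and $g=(D_\beta+1)f$ have to cancel or be absorbed into the good terms — and one must verify, from the power-law asymptotics of (i)–(ii), that every boundary contribution at $\beta=0$ and $\beta=\infty$, including those carrying an extra factor $\beta$ such as $[\beta|\hat f_n|^2]_0^\infty$, genuinely vanishes. A short preliminary remark is needed to license these manipulations: although $\hat g_n$ need not be globally bounded, $\hat f_n$ is smooth on $(0,\infty)$ and has the asymptotics established in (i)–(ii) at both ends, so $\hat g_n$ and its derivatives are controlled there. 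The hypothesis $\mu>1$ enters precisely in (i) (positivity of the exponent $\mu-1$ at $|n|=1$) and in (iii) (positivity of $2\mu-1$ and of $n^2\mu^2-(2\mu-1)$), which is why we do not merely assume $\mu>\tfrac12$.
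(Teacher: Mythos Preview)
Your $n=0$ case and your indicial analysis at $\beta=0$ in step (i) are correct and match the paper. The difficulty is that steps (ii) and (iii) as written do not constitute a proof.

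Step (iii) is the real gap: you propose an energy identity but do not produce one. You do not name the multiplier, you do not write down the quadratic form, and you do not verify coercivity; you only list the desiderata such an identity would have to meet and flag the cross terms and boundary contributions as obstacles. Since the coupling $\textrm{i}n\beta\hat f_n$ mixes $f$ and $g$ and carries an explicit factor of $\beta$, it is not at all clear that any single weight makes the form sign-definite while simultaneously killing the boundary terms at both ends; this is exactly the kind of computation that can fail, and the proposal gives no evidence that it succeeds. Step (ii) is also too vague: the point $\beta=\infty$ is irregular, and your claimed growth exponents $2\mu-1\pm a_n(\infty)$ are not the right ones (they are off by one from the actual asymptotics), nor do you justify that a bounded solution must \emph{decay} rather than merely remain bounded and oscillatory.

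The paper takes a completely different and much more direct route that avoids any energy estimate. After the substitution $v_n=e^{-\textrm{i}n\beta}\hat f_n$, the equation becomes $p(D_\beta)v_n-q(D_\beta)(\beta v_n)=0$ with explicit cubic $p$ and quadratic $q$, and the fundamental solution with largest indicial exponent $\lambda_1=|n|\mu+2\mu-1$ is identified as a generalized hypergeometric function $\beta^{\lambda_1}\,{}_2F_2[\cdots;-\textrm{i}n\beta]$. Its asymptotic as $\beta\to\infty$ is then read off from classical ${}_pF_q$ asymptotics, yielding growth like $\beta^{\sqrt{n^2\mu^2-2\mu+1}+2\mu-2}$; for $|n|=1$ the second bounded-at-zero solution is explicit, $\mfrak v^{(2)}=\beta^{\mu-1}$. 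Both exponents are strictly positive precisely when $\mu>1$, so every solution bounded at $0$ is unbounded at $\infty$, and no energy argument is needed. In other words, the paper solves the connection problem exactly via special functions, whereas your plan tries to bypass it with an unspecified integral identity.
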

\noindent\textit{Proof}. Taking the Fourier transform of the homogeneous equation $\mcal{L}(f) = 0$, we obtain
\begin{align*} \begin{split}
    & \big\{(-D_\beta + \textrm{i}n \beta + 2\mu - 1)^2 - (|n|\mu)^2 \big\} \hat{g}_n + (2\mu - 1) \textrm{i}n \beta \hat{f}_n = 0, \\
    & \hat{g}_n = (D_\beta + 1) \hat{f}_n,
\end{split} \end{align*}
for $n\in\Z$ and $\beta \geq 0$. Let $v_n (\beta) = e^{-\textrm{i}n \beta} \hat{f}_n(\beta)$. Then the equation becomes
\begin{equation} \label{eq:4.2}
(D_\beta - |n|\mu - 2\mu + 1)(D_\beta + |n|\mu - 2\mu + 1)(D_\beta + 1 + \textrm{i}n\beta) v_n + (2\mu - 1) \textrm{i}n\beta v_n = 0.
\end{equation}
We now seek a fundamental set of solutions to this homogeneous linear ODE using the Frobenius method. Lemma \ref{lem:4.3} is a consequence of the general theory of linear differential equations with regular singular points. For completeness, we provide a detailed proof rather than citing the general theory directly. Interested readers may refer to books \cite{CL55, Was87}. 

\begin{lemma} \label{lem:4.3}
For $p(\lambda) = (\lambda - \lambda_1)(\lambda - \lambda_2)(\lambda-\lambda_3)$ with $\lambda_1 > \lambda_2 \geq \lambda_3$ and a quadratic $q(\lambda) \in\C[\lambda]$, we consider a homogeneous linear differential equation
\begin{equation} \label{eq:4.3}
	L(v) \triangleq p(D_\beta)(v) - q(D_\beta)(\beta v) = 0, \quad \beta \in (0,\infty).
\end{equation}
(a) There are three linearly independent fundamental solutions $\mfrak{v}\upindex{1}, \mfrak{v}\upindex{2}, \mfrak{v}\upindex{3}$ to \eqref{eq:4.3} that
\begin{equation} \label{eq:4.4}
\mfrak{v}\upindex{1} \sim \beta^{\lambda_1}, ~~ \mfrak{v}\upindex{2} \sim \beta^{\lambda_2},~~  \mfrak{v}\upindex{3} \sim \begin{cases} \beta^{\lambda_3} & \lambda_2 \neq \lambda_3 \\ \beta^{\lambda_3} \log\beta & \lambda_2 = \lambda_3 \end{cases} \quad \text{as}~\beta \to 0\hspace{-.5mm}^+.
\end{equation}
(b) If $q(\lambda) = c(\lambda - \kappa_1)(\lambda - \kappa_2)$ for some $\kappa_1,\kappa_2\in \R$ and $c\in\C$, $\mfrak{v}\upindex{1}$ is represented by a generalized hypergeometric function:
\begin{equation} \label{eq:4.5}
\mfrak{v}\upindex{1} = \beta^{\lambda_1} {}_{2} F_2 \left[ \begin{matrix} \lambda_1 - \kappa_1 + 1, & \lambda_1 - \kappa_2 +1 \\ \lambda_1-\lambda_2 + 1, & \lambda_1 - \lambda_3 + 1 \end{matrix} ; c\beta \right]
\end{equation}
(c) Any distributional solution $v \in \mcal{D}'(0,\infty)$ to \eqref{eq:4.3} is a linear combination of the fundamental solution $v = C_1 \mfrak{v}\upindex{1} + C_2 \mfrak{v}\upindex{2} + C_3 \mfrak{v}\upindex{3}$.
\end{lemma}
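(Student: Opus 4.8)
The plan is to carry out the Frobenius method at the regular singular point $\beta=0$. The basic algebraic fact is that $D_\beta\beta^{s}=s\beta^{s}$, so substituting $v=\beta^{\lambda}\sum_{k\ge0}c_{k}\beta^{k}$ into \eqref{eq:4.3} and matching the coefficient of $\beta^{\lambda+k}$ yields the indicial equation $p(\lambda)c_{0}=0$ together with the recurrence $p(\lambda+k)\,c_{k}=q(\lambda+k)\,c_{k-1}$ for $k\ge1$ (with $c_{-1}=0$). Hence $\lambda$ must be one of $\lambda_{1},\lambda_{2},\lambda_{3}$. Since $p$ is cubic and $q$ quadratic, $q(\lambda+k)/p(\lambda+k)=O(1/k)$, so whenever the recurrence is solvable the formal series $\sum_{k}c_{k}\beta^{k}$ has infinite radius of convergence; in particular every Frobenius solution extends to all of $(0,\infty)$.

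For part (a) I would produce the three solutions in order of decreasing exponent. For $\lambda=\lambda_{1}$, the largest root, $p(\lambda_{1}+k)\ne0$ for every $k\ge1$, so the recurrence determines all $c_{k}$ uniquely from $c_{0}=1$ and gives $\mfrak{v}\upindex{1}\sim\beta^{\lambda_{1}}$. Part (b) is then immediate: when $q(\lambda)=c(\lambda-\kappa_{1})(\lambda-\kappa_{2})$ the recurrence telescopes to $c_{k}=c^{k}\prod_{j=1}^{k}\frac{(\lambda_{1}-\kappa_{1}+j)(\lambda_{1}-\kappa_{2}+j)}{j(\lambda_{1}-\lambda_{2}+j)(\lambda_{1}-\lambda_{3}+j)}$, and rewriting the products as Pochhammer symbols identifies $\sum_{k}c_{k}\beta^{k}$ with the ${}_{2}F_{2}$ series in \eqref{eq:4.5} (the lower parameters $\lambda_{1}-\lambda_{2}+1,\ \lambda_{1}-\lambda_{3}+1$ exceed $1$, so they are not nonpositive integers and the series is legitimate).

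For $\lambda=\lambda_{2}$: if $\lambda_{1}-\lambda_{2}\notin\Z_{>0}$ the same construction applies verbatim; if $\lambda_{1}-\lambda_{2}=N\in\Z_{>0}$ the recurrence may break down at $k=N$ because $p(\lambda_{1})=0$, and I would remove the obstruction by adding a multiple of $\mfrak{v}\upindex{1}\log\beta$, using the operator identity $L(g\log\beta)=(Lg)\log\beta+p'(D_\beta)g-q'(D_\beta)(\beta g)$ (a consequence of $D_\beta(g\log\beta)=g+(D_\beta g)\log\beta$) to compute the correction. In both cases the leading behaviour is $\beta^{\lambda_{2}}$. For $\lambda=\lambda_{3}$: if $\lambda_{2}>\lambda_{3}$ the argument is the same, now with possible logarithmic corrections arising from resonances with $\lambda_{1}$ or $\lambda_{2}$ (these enter only at higher order), yielding $\mfrak{v}\upindex{3}\sim\beta^{\lambda_{3}}$; if $\lambda_{2}=\lambda_{3}$ the indicial root is double, no second pure-power solution exists, and I would look for $\mfrak{v}\upindex{3}=\mfrak{v}\upindex{2}\log\beta+\beta^{\lambda_{3}}\sum_{k}d_{k}\beta^{k}$. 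Using the same identity, and $p'(\lambda_{2})=0$, the series part solves the residual recurrence and the leading behaviour is $\beta^{\lambda_{3}}\log\beta$ with a nonzero coefficient (namely $\mfrak{v}\upindex{2}$). Linear independence of $\mfrak{v}\upindex{1},\mfrak{v}\upindex{2},\mfrak{v}\upindex{3}$ is then immediate from their ordered leading asymptotics as $\beta\to0+$.

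For part (c), on $(0,\infty)$ the coefficient of $\del_\beta^{3}$ in $L$ equals $\beta^{3}\ne0$, so $L$ is a nondegenerate third-order linear ODE operator with smooth coefficients there; rewriting it as a first-order system $U'=A(\beta)U$ with $A$ smooth and conjugating by a classical fundamental matrix shows that any distributional solution on $(0,\infty)$ is classical, hence lies in the three-dimensional classical solution space, which equals $\mathrm{span}\{\mfrak{v}\upindex{1},\mfrak{v}\upindex{2},\mfrak{v}\upindex{3}\}$. The main obstacle is the bookkeeping in part (a) for the degenerate configurations (indicial roots differing by a positive integer, or $\lambda_{2}=\lambda_{3}$, or both): one must check that the log-corrected constructions still solve \eqref{eq:4.3} and retain the stated leading asymptotics, and in particular that the logarithmic coefficient does not vanish, which is exactly what preserves independence. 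Convergence of the series, by contrast, is automatic here since $q/p$ decays like $1/k$; for the underlying general theory see \cite{CL55}.
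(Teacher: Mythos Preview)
Your proposal is correct and follows the same overall strategy as the paper: Frobenius method at $\beta=0$ with the recurrence $p(\lambda+k)c_k=q(\lambda+k)c_{k-1}$, entire-in-$\beta$ convergence from the $1/k$ decay of $q/p$, identification of $\mfrak{v}\upindex{1}$ with ${}_2F_2$ via Pochhammer symbols, and for part (c) the reduction to a first-order system followed by conjugation with a classical fundamental matrix to force any distributional solution to be classical.

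The one substantive difference is in how the degenerate configurations (roots differing by a positive integer, or $\lambda_2=\lambda_3$) are handled. You build the log-corrected solutions by hand, using the operator identity $L(g\log\beta)=(Lg)\log\beta+p'(D_\beta)g-q'(D_\beta)(\beta g)$ and then solving the residual recurrence case by case. The paper instead treats $\lambda$ as a free complex parameter, so that $v(\beta;\lambda)=\beta^\lambda\sum_k a_k(\lambda)\beta^k$ solves $L(v)=p(\lambda)\beta^\lambda$; the $a_k$ are rational in $\lambda$, one clears their poles by multiplying through by $(\lambda-\lambda_*)^m$, and then differentiates in $\lambda$ at $\lambda=\lambda_*$ to produce the solutions with leading behaviour $(\log\beta)^j\beta^{\lambda_*}$. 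The two methods are essentially equivalent (since $\partial_\lambda\beta^\lambda=\beta^\lambda\log\beta$), but the paper's parametric trick handles all resonance patterns uniformly without the case-splitting you flag as the ``main obstacle,'' and makes the nonvanishing of the leading coefficient automatic. Your direct approach is equally valid and arguably more transparent for a single specific degeneracy, at the cost of more bookkeeping.
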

\begin{proof}[Proof of Lemma \ref{lem:4.3}]
We constuct solutions $\mfrak{v}\upindex{1}$, $\mfrak{v}\upindex{2}$, and $\mfrak{v}\upindex{3}$ of \eqref{eq:4.3} with the asymptotic behavior given in \eqref{eq:4.4}. These will form the desired fundamental system. More generally, we outline the construction of $\deg p$ solutions with prescribed behavior as $\beta\to 0+$, when $p,q$ are polynomials with $\deg p \geq \deg q + 1$.

For each $\lambda\in\C$, consider the equation
\begin{equation} \label{eq:4.6}
L(v) = p(\lambda) \beta^\lambda, \quad \beta \in(0,\infty).
\end{equation}
We seek a formal power series solution of the form
\begin{equation} \label{eq:4.7}
v(\beta;\lambda) = \beta^\lambda \sum_{k=0}^\infty a_k(\lambda) \beta^k.
\end{equation}
Then, the coefficients $a_k(\lambda)$ are determined recursively from
\begin{equation} \label{eq:4.8}
a_0 = 1,~~ p(\lambda+k) a_k = q(\lambda+k)a_{k-1}, \quad k\in\Z_+.
\end{equation}
Each coefficient $a_k(\lambda)$ is a rational function of $\lambda$, is holomorphic in the domain $\mcal{U} = \C \setminus \{\lambda - k : p(\lambda) = 0, k\in\Z_+\}$. In addition, for any compact set $K$ in $\mcal{U}$, we have a uniform bound $|a_k (\lambda)| \leq C_{K} \frac{c^k}{k!}$, $\lambda\in K$, where $c$ is the leading coefficient of $q$ divided by that of $p$. By Cauchy estimates for analytic functions, the derivatives satisfy $|\del^m_\lambda a_k(\lambda)| \leq C_{m,K} \frac{c^k}{k!}$, $\lambda\in K$. Hence, for each $m\in \Z_{\geq 0}$, the series $\sum_{k=0}^\infty \del^m_\lambda a_k(\lambda) \beta^k$ defines an entire function in $\beta$.

If $\lambda\in \mcal{U}$, then $v(\beta;\lambda)$ defined in \eqref{eq:4.7} is a smooth function of $\beta\in(0,\infty)$ and solves \eqref{eq:4.6}. If $p(\lambda_*) = 0$ and $p(\lambda_* + k) \neq 0$ for all $k\in \Z_+$, then $v(\beta; \lambda_*)$ solves the homogeneous equation \eqref{eq:4.3} and $v(\beta;\lambda_*) \sim \beta^{\lambda_*}$ as $\beta \to 0+$.

Next, consider the general case when some roots of $p$ differ by a nonnegative integer. Suppose $p(\lambda_*) = 0$, and that $\{\lambda_* + k : k \in\Z_+\}$ contains $m (\geq 0)$ roots of $p$ counting multiplicity. Then, $(\lambda - \lambda_*)^m a_k(\lambda)$ is analytic at $\lambda_*$ for every $k\in\Z_+$. Hence, $w(\beta;\lambda) = (\lambda - \lambda_*)^m v(\beta;\lambda)$ is a smooth function of $\beta\in(0,\infty)$ for $\lambda$ in a neighborhood of $\lambda_*$. Moreover, $w$ solves
\[L(w) = (\lambda - \lambda_*)^m p(\lambda) \beta^\lambda.\]
Differentiating this identity $m$ times by $\lambda$ yields
\[ L\bigg( \frac{\del^m w}{\del\lambda^m} \bigg) = m! p(\lambda) \beta^\lambda + (\lambda - \lambda_*) (...).\]
Therefore, $\frac{\del^m w}{\del\lambda^m}\vert_{\lambda = \lambda_*}$ solves the homogeneous equation \eqref{eq:4.3}. Because $a_0(\lambda)$ has a factor $(\lambda - \lambda_*)^m$, the leading term of the series expansion of $\frac{\del^m w}{\del\lambda^m}\vert_{\lambda = \lambda_*}$ at $\beta = 0$ is $\beta^{\lambda_*}$. Lastly, when $\lambda_*$ is a root of $p$ with multiplicity $l(\geq 1)$, then
\[\frac{\del^m w}{\del\lambda^m}\Big\vert_{\lambda = \lambda_*}, \cdots,~ \frac{\del^{m+l-1} w}{\del\lambda^{m+l-1}}\Big\vert_{\lambda = \lambda_*},\]
are solutions of the original equation \eqref{eq:4.9} and
\[\frac{\del^{m+j} w}{\del\lambda^{m+j}}\Big\vert_{\lambda = \lambda_*} \sim (\log\beta)^j \beta^{\lambda_*} ~\text{as}~ \beta \to 0+, ~\text{for}~ j=0,\cdots,l-1.\]

Summing up, if $p(\lambda) = 0$ has roots $\lambda_1, \cdots, \lambda_n$ with multiplicities $l_1, \cdots, l_n$, so that $l_1 + \cdots + l_n = N = \deg p$, then there are $N$ linearly independent solutions of \eqref{eq:4.3} with asymptotic behavior
\[ (\log\beta)^{j} \beta^{\lambda_k} : 0\leq j\leq l_k - 1, 1\leq k\leq n,\]
as $\beta \to 0+$. This overpowers part (a).

Now, suppose $\lambda_1$ is the largest root of $p$, then $\lambda_1 + k$ is not a root of $p$ for all positive integers $k$. So, it is the simplest case:
\[\mfrak{v}\upindex{1} = v(\beta;\lambda_1).\]
When $q(\lambda) = c(\lambda - \kappa_1)(\lambda - \kappa_2)$, the recurrence relation of coefficients \eqref{eq:4.8} gives \eqref{eq:4.5}.

We now prove part (c). It is a standard result in linear ODE theory (see, e.g., Chapter 2 in \cite{BV12}). Via the substitution $\gamma = \log(\beta)$ and letting $f=(v, \del_\gamma v, \del_\gamma^2 v)$, we can express \eqref{eq:4.3} as a first-order system $\del_\gamma f = A(\gamma) f$, where $A(\gamma) : \R \to \R^{3\times 3}$ is smooth. There are three linearly independent fundamental solutions $f\upindex{1}, f\upindex{2}, f\upindex{3}$, which correspond to $\mfrak{v}\upindex{1}, \mfrak{v}\upindex{2}, \mfrak{v}\upindex{3}$ respectively. The fundamental solutions form matrix $X(\gamma) = (f\upindex{1}, f\upindex{2}, f\upindex{3})$ which is invertible for all $\gamma\in\R$. For any distributional solution $f \in \mcal{D}'$, we compute:
$$ \del_\gamma (X^{-1} f) = -X^{-1}(\del_\gamma X)X^{-1}f + X^{-1}\del_\gamma f = -X^{-1}(AX)X^{-1}f + X^{-1}Af = 0 \quad \text{in } \mcal{D}'.$$
Thus, $X^{-1} f$ is a constant vector $C \in \mathbb{R}^3$. Multiplying by $X(\gamma)$ and taking the first component immediately gives $v = \sum_{j=1}^3 C_j \mfrak{v}\upindex{j}$.
\end{proof}
\noindent\textit{Continuing the proof of Proposition \ref{prop:4.2}}. \textit{Case 1}. $n=0$: The equation becomes $(D_\beta - 2\mu + 1)^2 (D_\beta + 1) v_0 = 0$. Hence, $v_0 = C_1 \beta^{2\mu - 1} + C_2 \beta^{2\mu - 1} \log\beta + C_3 \beta^{-1}$. Since $v_0$ is bounded, it must be $C_1 = C_2 = C_3 = 0$.

\noindent\textit{Case 2}. $|n| \geq 2$: The equation \eqref{eq:4.2} falls into the setting of Lemma \ref{lem:4.3}, with parameters $\lambda_1 = |n|\mu + 2\mu -1$, $\lambda_2 = -1$, $\lambda_3 = -|n|\mu + 2\mu - 1$, and
\begin{align*}
q(\lambda) &= -\textrm{i} n \{(\lambda - |n|\mu - 2\mu + 1)(\lambda + |n|\mu - 2\mu + 1) + (2\mu - 1)\} \\ &= -\textrm{i} n (\lambda - \sqrt{n^2 \mu^2 - 2\mu + 1} - 2\mu + 1)(\lambda + \sqrt{n^2 \mu^2 - 2\mu + 1} - 2\mu + 1).
\end{align*}
Since $v_n$ is bounded, we get from the asymptotics at $\beta \to 0+$ that $C_2 = C_3 = 0$. For $\mfrak{v}\upindex{1}$, we have
\[\mfrak{v}\upindex{1} = \beta^{|n|\mu + 2\mu - 1} {}_{2} F_2 \left[ \begin{matrix} |n|\mu + \sqrt{n^2\mu^2 - 2\mu + 1} + 1 & |n|\mu - \sqrt{n^2\mu^2 - 2\mu + 1} +1 \\ |n|\mu + 2\mu + 1 & 2|n|\mu + 1 \end{matrix} ; -\textrm{i}n\beta \right]\]
From the asymptotic behavior of generalized hypergeometric functions (see remark below,  \cite{AD10}, or Chapter 5 in \cite{Luk69}) gives:
\[ \mfrak{v}\upindex{1} \sim \beta^{\sqrt{n^2\mu^2 - 2\mu + 1} + 2\mu - 2} \quad \text{as}~ \beta \to \infty.\]
Since $\sqrt{|n|^2 \mu^2 - 2\mu + 1} + 2\mu - 2 > 0$ for any $\mu > 1$, the boundedness implies $C_1 = 0$. Therefore, $f \equiv 0$.

\noindent\textit{Case 3}. $|n| = 1$: We again appeal to Lemma \ref{lem:4.3}. The parameters are $\lambda_1 = 3\mu - 1$, $\lambda_2 = \mu - 1$, $\lambda_3 = -1$, and
\[q(\lambda) = -\textrm{i} (\lambda - 3\mu + 2)(\lambda - \mu).\]
From the asymptotics as $\beta \to 0+$, we again have $C_3 = 0$. For $\mfrak{v}\upindex{1}$, the same asymptotic of hypergeometric function gives $\mfrak{v}\upindex{1} \sim \beta^{3\mu - 3}$ as $\beta \to \infty$. For $\mfrak{v}\upindex{2}$, we see that $v =  \beta^{\mu - 1}$ is a solution to the ODE since
\[p(D_\beta)v - q(D_\beta)(\beta v) = p(D_\beta) v - \beta q(D_\beta + 1) v = \{(D_\beta - 3\mu + 1)(D_\beta - 1) + \textrm{i} (D_\beta - 3\mu + 3)\} (D_\beta - \mu + 1)v. \]
Thus $\mfrak{v}\upindex{2} = \beta^{\mu - 1}$. Since $3\mu - 3 > \mu - 1 > 0$ for $\mu > 1$, the boundedness implies $C_1 = C_2 = 0$. Hence, $v_n \equiv 0$ in this case as well.\null\nobreak\hfill\ensuremath{\square}
\begin{remark}
Note that the condition $\mu > 1$ is required only in the $|n| = 1$ case. For $\frac{1}{2} < \mu < 1$, the kernel is nontrivial in the class of bounded functions:
\[\ker \mcal{L} = \text{span} \{e^{\pm \textrm{i}(\phi + \beta)} \mfrak{v}_{\pm 1}\upindex{1}(\beta)\}.\]
\end{remark}
\begin{remark}[Asymptotic of generalized hypergeometric functions \cite{AD10, Luk69}]
Let $(a)_n = a (a+1) \cdots (a+n-1)$ denote the Pochhammer symbol. The generalized hypergeometric function is defined by
\[{}_p F_q \left[ \begin{array}{ccc} a_1 & \cdots & a_p \\ b_1 & \cdots & b_q \end{array} ;z \right] = \sum_{k=0}^\infty \frac{(a_1)_k \cdots (a_p)_k}{(b_1)_k \cdots (b_q)_k} \frac{z^k}{k!}.\]
When $a_1 > \cdots > a_q$, the asymptotic behavior of ${}_q F_q$ along the imaginary axis is given by
\[ {}_q F_q(-\textrm{i}R) \sim R^{\max\{-a_q, a_1 + \cdots + a_q - b_1 - \cdots - b_q\}} ~\text{as}~R \to +\infty.\]
\end{remark}

\subsection{The inverse of D\tsub{\textbeta}+1}
We define the inverse operator for $D_\beta + 1$ as
\[(D_\beta + 1)^{-1} g(\beta) = \frac{1}{\beta} \int_0^\beta g(b) \,db,\]
for $g\in \mcal{C}[0,\infty]$. Solving $(D_\beta + 1) f = g$ yields this formula.

\begin{lemma} \label{lem:4.4}
$(D_\beta + 1)^{-1}$ is a bounded operator on $\mcal{C}$, $\mcal{C}^\delta$, and $\mcal{C}^\delta_0$.
\end{lemma}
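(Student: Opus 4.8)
The plan is to rewrite the operator in scale-invariant form and reduce everything to elementary estimates. Substituting $b = \beta s$ gives
\[
(D_\beta+1)^{-1}g(\beta) = \int_0^1 g(\beta s)\,ds,
\]
which is the same device already used for $L_{0,\pm}^{-1}$ in the proof of Lemma~\ref{lem:3.6}. From this representation all three assertions follow quickly.

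First I would treat $\mcal{C}$. For $g\in\mcal{C}[0,\infty]$ the integrand is bounded by $\norm{g}_{\sup}$, so $\norm{(D_\beta+1)^{-1}g}_{\sup}\le\norm{g}_{\sup}$. Continuity of $\beta\mapsto\int_0^1 g(\beta s)\,ds$ on $(0,\infty)$, as well as the limits $(D_\beta+1)^{-1}g(\beta)\to g(0)$ as $\beta\to 0+$ and $(D_\beta+1)^{-1}g(\beta)\to g(\infty)$ as $\beta\to\infty$, all follow from the dominated convergence theorem with dominating constant $\norm{g}_{\sup}$, using that $g$ extends continuously to the compactification $[0,\infty]$. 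In particular $(D_\beta+1)^{-1}g\in\mcal{C}[0,\infty]$ and $(D_\beta+1)^{-1}g(\infty)=g(\infty)$.

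Next, for the Hölder-type norm I would use the identity $(D_\beta+1)^{-1}g(\beta)-g(\infty)=\int_0^1\big(g(\beta s)-g(\infty)\big)\,ds$ together with the pointwise bound $|g(b)-g(\infty)|\le b^{-\delta}\,\sup_{b'\in[0,\infty]}(b')^\delta|g(b')-g(\infty)|$. This yields
\[
\beta^\delta\,\big|(D_\beta+1)^{-1}g(\beta)-g(\infty)\big|
\le\Big(\int_0^1 s^{-\delta}\,ds\Big)\sup_{b'}(b')^\delta|g(b')-g(\infty)|
=\frac{1}{1-\delta}\sup_{b'}(b')^\delta|g(b')-g(\infty)|,
\]
which is finite since $\delta\in(0,1)$. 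Combining with the sup bound gives $\norm{(D_\beta+1)^{-1}g}_{\mcal{C}^\delta}\lesssim\norm{g}_{\mcal{C}^\delta}$. Finally, since $(D_\beta+1)^{-1}g(\infty)=g(\infty)$, the operator preserves the closed subspace $\{g(\infty)=0\}$, so boundedness on $\mcal{C}^\delta_0$ is immediate by restriction of the $\mcal{C}^\delta$ estimate.

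There is no serious obstacle here. The only point requiring a little care is verifying that $(D_\beta+1)^{-1}g$ genuinely lies in $\mcal{C}[0,\infty]$ — i.e. that it is continuous up to the adjoined point at infinity and takes the value $g(0)$ at the origin — which is precisely what the dominated convergence argument in the first step supplies.
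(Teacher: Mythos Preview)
Your proposal is correct and follows essentially the same approach as the paper: both use the substitution $b=\beta s$ to write $(D_\beta+1)^{-1}g(\beta)=\int_0^1 g(\beta s)\,ds$, bound the sup norm by $\norm{g}_{\sup}$, identify the value at $\infty$ as $g(\infty)$ via dominated convergence, and estimate $\beta^\delta|(D_\beta+1)^{-1}g(\beta)-g(\infty)|$ by pulling $(\beta s)^{-\delta}$ out of the integral. Your constant $(1-\delta)^{-1}$ is in fact the correct one from $\int_0^1 s^{-\delta}\,ds$; the paper records $\delta^{-1}$, which appears to be a slip, but the argument is the same.
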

\begin{proof}
First, $\norm{(D_\beta + 1)^{-1} f}_{\sup} \leq \norm{f}_{\sup}$. Next,
\[(D_\beta + 1)^{-1} f(\beta) = \int_0^1 f(\beta s) \,ds \to f(\infty) ~\text{as}~ \beta \to \infty.\]
So, $(D_\beta + 1)^{-1} f(\infty) = f(\infty)$. Lastly,
\[|(D_\beta + 1)^{-1} f(\beta) - (D_\beta + 1)^{-1} f(\infty)| \leq \int_0^1 |f(\beta s) - f(\infty)| \,ds \leq (1-\delta)^{-1} \beta^{-\delta} \norm{f}_{\mcal{C}^\delta}. \qedhere\]
\end{proof}

\begin{lemma} \label{lem:4.5}
$(\beta \del_\phi) (D_\beta + 1)^{-1} L_-^{-1}$ is a bounded operator from $V$ to $\tilde{V}$.
\end{lemma}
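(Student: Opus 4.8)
The plan is to reduce the bound to a uniform-in-$n$ estimate. Since $\del_\phi$ acts on the $n$-th Fourier mode as multiplication by $\textrm{i}n$ and the weights $\langle n\rangle^{1/2}$ defining $V = \mcal{A}^{1/2}V_n$ and $\tilde V = \mcal{A}^{1/2}\mcal{C}^\delta$ are carried along unchanged, it suffices to find a constant $C$, independent of $n$, with
\[
\big\| v \longmapsto \textrm{i}n\beta\,(D_\beta + 1)^{-1} L_{n,-}^{-1} v \big\|_{V_n \to \mcal{C}^\delta} \le C ;
\]
the mode $n = 0$ is trivial, since then $\del_\phi \equiv 0$. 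So fix $n \ne 0$, put $v = \hat v_n \in \mcal{C}^\delta_0$, $g = L_{n,-}^{-1} v$, $F = (D_\beta + 1)^{-1} g$, and $h \triangleq \textrm{i}n\beta F$; by the explicit formula for $(D_\beta + 1)^{-1}$ we have $h(\beta) = \textrm{i}n\int_0^\beta g(c)\,dc$. From Lemma~\ref{lem:3.6}, $g \in \mcal{C}^\delta_0$ with $\|g\|_{\sup} \lesssim |n|^{-1}\|v\|_{V_n}$, and from Proposition~\ref{prop:3.7}, $\hat D_\rho g \in \mcal{C}^\delta_0$ with $\|\hat D_\rho g\|_{\mcal{C}^\delta} \lesssim \|v\|_{V_n}$. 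Since $\hat D_\rho g = -D_\beta g + \textrm{i}n\beta g$, the identity $\beta g' = \textrm{i}n\beta g - \hat D_\rho g$ exhibits $g'$ as a continuous function on $(0,\infty)$, so $g \in C^1(0,\infty)$ and
\[
\textrm{i}n\, g = g' + \tfrac{1}{\beta}\,\hat D_\rho g \qquad \text{on } (0,\infty).
\]

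The estimate splits at $\beta = 2$. For $0 \le \beta \le 2$ the factor $\beta$ is bounded and nothing is lost by the crude bound: $|h(\beta)| \le 2|n|\,\|F\|_{\sup} \le 2|n|\,\|g\|_{\sup} \lesssim \|v\|_{V_n}$, and as $\beta^\delta \lesssim 1$ here this region contributes acceptably to the $\mcal{C}^\delta$ norm. For $\beta \ge 2$ the factor $\beta$ is dangerous, so we trade it for decay by integrating the displayed ODE identity from $2$ to $\beta$:
\[
h(\beta) = 2\,\textrm{i}n\,F(2) + g(\beta) - g(2) + \int_2^\beta \frac{\hat D_\rho g(c)}{c}\,dc , \qquad \beta \ge 2 .
\]
Because $\hat D_\rho g \in \mcal{C}^\delta_0$, the integrand is $O\!\big(c^{-1-\delta}\|v\|_{V_n}\big)$, so $\int_2^\infty \tfrac{\hat D_\rho g(c)}{c}\,dc$ converges absolutely; together with $g(\beta)\to 0$ and the $\beta = 2$ bound from the first region, this shows $\ell_n \triangleq h(\infty)$ exists and $|\ell_n| \lesssim \|v\|_{V_n}$. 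Subtracting,
\[
h(\beta) - \ell_n = g(\beta) - \int_\beta^\infty \frac{\hat D_\rho g(c)}{c}\,dc , \qquad \beta \ge 2 ,
\]
and both terms on the right are $O\!\big(\beta^{-\delta}\|v\|_{V_n}\big)$ — the first because $g \in \mcal{C}^\delta_0$, the second because $\int_\beta^\infty c^{-1-\delta}\,dc \sim \beta^{-\delta}$. Assembling the two regions gives $h \in \mcal{C}^\delta$ with $\|h\|_{\mcal{C}^\delta} \lesssim \|v\|_{V_n}$ uniformly in $n$, hence $\|(\beta\del_\phi)(D_\beta+1)^{-1}L_-^{-1}\|_{V,\tilde V} \lesssim 1$.

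The only genuine obstacle is the unbounded factor $\beta$: naively $\beta\cdot\mcal{C}^\delta_0 \not\subset \mcal{C}^\delta$, since functions in $\mcal{C}^\delta_0$ decay only like $\beta^{-\delta}$. The missing decay is supplied by the oscillatory structure of $g = L_{n,-}^{-1}v$, encoded in the identity $\textrm{i}n\,g = g' + \beta^{-1}\hat D_\rho g$: it lets us integrate $\textrm{i}n\,g$ at the cost of the weight $\beta^{-1}$, which is harmless at infinity and produces the $\beta^{-1-\delta}$ integrability that yields the gain. One must resist running this integration down to $\beta = 0$, where $c^{-1}$ is not integrable; the interval $[0,2]$ is instead dispatched by the crude bound, where boundedness of $\beta$ makes the difficulty vanish.
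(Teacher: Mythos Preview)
Your proof is correct and follows essentially the same route as the paper's. Both arguments split at $\beta=2$, handle $[0,2]$ by the crude bound $|h(\beta)|\le 2|n|\,\|L_{n,-}^{-1}v\|_{\sup}$, and on $[2,\infty)$ use the identity $\textrm{i}n\,g=g'+\beta^{-1}(\text{something in }\mcal{C}^\delta_0)$ to trade the oscillation for an integrable weight; your ``something'' is $\hat D_\rho g$ (bounded via Proposition~\ref{prop:3.7}), while the paper writes it out explicitly as $v+(a_n(\infty)-2\mu+1)L_{n,-}^{-1}v$, which is the same object.
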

\begin{proof}
We define an operator $T_n$ as
\[T_n f(\beta) \triangleq (\textrm{i}n\beta) (D_\beta + 1)^{-1} L_{n,-}^{-1} f(\beta) = \textrm{i}n \int_0^\beta L_{n,-}^{-1} f(b) \,db.\]
It suffices to show that $T_n$ is a bounded operator from $C^\delta_0$ to $C^\delta$, uniformly for $n\in\Z \setminus \{0\}$. We recall that
\[\textrm{i}n \beta = D_\beta + L_{n,-} + a_n(\beta).\]
Using this identity, we write
\begin{align} \label{eq:4.9} \begin{split}
    T_n f(\beta) &= \int_0^\beta \textrm{i}n L_{n,-}^{-1} f(b) \,db \\
    &= T_n f(2) + \int_2^\beta (\textrm{i}n b) L_{n,-}^{-1} f(b) \frac{db}{b}  \\
    &= T_n f(2) + L_{n,-}^{-1} f(\beta) - L_{n,-}^{-1} f(2) + \int_2^\beta \frac{f(b) + \sqrt{n^2 \mu^2 - 2\mu + 1} L_{n,-}^{-1} f(b)}{b} \,db.
\end{split} \end{align}
By Lemma \ref{lem:3.6}, $L_{n,-}^{-1} f \in \mcal{C}^\delta_0$ and $\norm{L_{n,-} f}_{\mcal{C}^\delta} \lesssim |n|^{-1} \norm{f}_{\mcal{C}^\delta}$. Hence, for $\beta \in [0,2]$,
\[\abs{T_n f(\beta)} \leq 2|n| \norm{L_{n,-}^{-1} f}_{\sup} \lesssim |n| \cdot |n|^{-1} \norm{f}_{\mcal{C}^\delta} = \norm{f}_{\mcal{C}^\delta}.\]
Let $h(b) = f(b) + \sqrt{n^2 \mu^2 - 2\mu + 1} L_{n,-}^{-1} f(b)$. Then $h\in \mcal{C}^\delta_0$ by \eqref{eq:3.6}. And from \eqref{eq:3.7}, $\norm{h}_{\mcal{C}^\delta} \lesssim \norm{f}_{\mcal{C}^\delta}$ uniformly in $n\in \Z\setminus\{0\}$. Using \eqref{eq:4.9}, we get
\[ T_n f(\infty) = T_n f(2) - L_{n,-}^{-1} f(2) + \int_2^\infty \frac{h(b)}{b} \,db,\]
which is finite since $h\in \mcal{C}^\delta_0$. For $\beta \to \infty$, we estimate
\[ |T_n f(\beta) - T_n f(\infty)| \leq |L_{n,-} f(\beta)| + \int_\beta^\infty \frac{|h(b)|}{b} \,db \lesssim \beta^{-\delta} \norm{f}_{\mcal{C}^\delta} + \beta^{-\delta} \norm{h}_{\mcal{C}^\delta} \lesssim \beta^{-\delta} \norm{f}_{\mcal{C}^\delta}.\]
This proves that $T_n f \in \mcal{C}^\delta$ with a norm uniform in $n \in \mbb{Z}\setminus\{0\}$, and thus the lemma follows.
\end{proof}
\begin{cor} \label{cor:4.6}
$\norm{\text{id}}_{Y, V} \lesssim 1$, $\norm{\del_\phi}_{Y, V} \lesssim 1$, and $\norm{D_\rho}_{Y, \tilde{V}} \lesssim 1$.
\end{cor}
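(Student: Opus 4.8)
The plan is to strip off the outer operator $(D_\beta+1)^{-1}$ that defines $Y$ and reduce all three estimates to Proposition~\ref{prop:3.7} together with Lemmas~\ref{lem:4.4} and~\ref{lem:4.5}. For $f\in Y$ I will write $g=(D_\beta+1)f\in\tilde Y$, so that $f=(D_\beta+1)^{-1}g$ and, by the definition of the induced norm on $Y=(D_\beta+1)^{-1}\tilde Y$, $\norm{g}_{\tilde Y}=\norm{f}_Y$; thus $g\mapsto f$ is an isometry from $\tilde Y$ onto $Y$ and no constants are lost in the reduction.

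For $\norm{\mathrm{id}}_{Y,V}$: Lemma~\ref{lem:4.4} gives that $(D_\beta+1)^{-1}$ is bounded on $\mcal{C}^\delta$ and on $\mcal{C}^\delta_0$ with mode-independent norm, hence bounded on $V=\mcal{A}^{\frac12}V_n$; combining this with $\norm{\mathrm{id}}_{\tilde Y,V}\lesssim 1$ from Proposition~\ref{prop:3.7} yields $\norm{f}_V=\norm{(D_\beta+1)^{-1}g}_V\lesssim\norm{g}_V\lesssim\norm{g}_{\tilde Y}=\norm{f}_Y$. For $\norm{\del_\phi}_{Y,V}$: since $\del_\phi$ acts as the Fourier multiplier $\textrm{i}n$ it commutes with $D_\beta+1$, so $\del_\phi f=(D_\beta+1)^{-1}\del_\phi g$, and the same two ingredients (now with $\norm{\del_\phi}_{\tilde Y,V}\lesssim 1$) give $\norm{\del_\phi f}_V\lesssim\norm{\del_\phi g}_V\lesssim\norm{g}_{\tilde Y}=\norm{f}_Y$.

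The only part with genuine content is the $D_\rho$ estimate, and it hinges on the identity obtained mode by mode from $\hat D_\rho=-D_\beta+\textrm{i}n\beta = 1-(D_\beta+1)+\textrm{i}n\beta$ together with $\hat g_n=(D_\beta+1)\hat f_n$, namely
\begin{equation*}
D_\rho f = f - g + (\beta\del_\phi) f .
\end{equation*}
The first two terms are harmless: $\norm{f}_{\tilde V}\lesssim\norm{f}_V\lesssim\norm{f}_Y$ and $\norm{g}_{\tilde V}\lesssim\norm{g}_V\lesssim\norm{g}_{\tilde Y}=\norm{f}_Y$, using the continuous embedding $V\hookrightarrow\tilde V$, the first estimate of this corollary, and Proposition~\ref{prop:3.7}. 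The third term is precisely why the target must be $\tilde V$ rather than $V$: multiplication by $\beta$ destroys decay at infinity and can create a nonzero limit there even in nonzero modes, so $(\beta\del_\phi)f$ need not lie in $V$. To handle it I will write $g=L_-^{-1}v$ with $v\in V$, $\norm{v}_V=\norm{g}_{\tilde Y}$ (possible since $\tilde Y=L_-^{-1}V$), so that $(\beta\del_\phi)f=(\beta\del_\phi)(D_\beta+1)^{-1}L_-^{-1}v$, and invoke Lemma~\ref{lem:4.5} to get $\norm{(\beta\del_\phi)f}_{\tilde V}\lesssim\norm{v}_V=\norm{f}_Y$. Adding the three contributions gives $\norm{D_\rho f}_{\tilde V}\lesssim\norm{f}_Y$.

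I do not expect a real obstacle here: the statement is essentially bookkeeping built on results already in hand. The one place that needs care is keeping the mapping spaces straight in the $D_\rho$ estimate — recognizing that the $\beta\del_\phi$ contribution cannot be accommodated in $V$ and must be routed through Lemma~\ref{lem:4.5} into $\tilde V$ — and verifying that $g\mapsto f$ is an isometry from $\tilde Y$ onto $Y$ so the estimates propagate with no loss of constants.
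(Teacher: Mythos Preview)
Your proposal is correct and follows essentially the same route as the paper: the first two estimates are obtained by combining Lemma~\ref{lem:4.4} with Proposition~\ref{prop:3.7}, and the $D_\rho$ estimate is reduced to Lemma~\ref{lem:4.5} via the identity $D_\rho f = f - g + (\beta\del_\phi)f$, which is exactly the paper's identity $D_\rho(D_\beta+1)^{-1} = (\beta\del_\phi)(D_\beta+1)^{-1} + (D_\beta+1)^{-1} - \mathrm{id}$ applied to $g$. Your write-up is in fact slightly cleaner, since you make explicit the step $g=L_-^{-1}v$ needed to invoke Lemma~\ref{lem:4.5}.
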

\begin{proof}
The first two estimates follow directly from Lemma \ref{lem:4.4} and Proposition \ref{prop:3.7}:
\[\norm{\text{id}}_{Y, V} = \norm{(D_\beta + 1)^{-1}}_{\tilde{Y}, V} \leq \norm{\text{id}}_{\tilde{Y}, V} \norm{(D_\beta + 1)^{-1}}_{V,V} \lesssim 1,\]
and since $\del_\phi$ and $(D_\beta + 1)^{-1}$ commute,
\[\norm{\del_\phi}_{Y, V} = \norm{\del_\phi (D_\beta + 1)^{-1}}_{\tilde{Y},V} = \norm{(D_\beta + 1)^{-1} \del_\phi}_{\tilde{Y},V} \leq \norm{\del_\phi}_{\tilde{Y}, V} \norm{(D_\beta + 1)^{-1}}_{V,V} \lesssim 1.\]
For the last estimate, we use the identity
\[D_\rho (D_\beta + 1)^{-1} = (\beta \del_\phi) (D_\beta + 1)^{-1} + (D_\beta + 1)^{-1} - \text{id}.\]
By Lemma \ref{lem:4.5}, $(\beta\del_\phi)(D_\beta + 1)^{-1}$ is a bounded operator from $\tilde{Y}$ to $\tilde{V}$. Since $V$ is a subspace of $\tilde{V}$,
\[\norm{D_\rho}_{Y, \tilde{V}} = \norm{D_\rho (D_\beta + 1)^{-1}}_{\tilde{Y}, \tilde{V}} \leq \norm{(\beta\del_\phi) (D_\beta + 1)^{-1}}_{\tilde{Y}, \tilde{V}} + \norm{(D_\beta + 1)^{-1}}_{\tilde{Y},V} + \norm{\text{id}}_{\tilde{Y},V} \lesssim 1. \qedhere \]
\end{proof}

We summarize function spaces and the boundedness of operators between the spaces as follows. The arrows are bounded linear operators, the arrows with $\cong$ are isometries, and $C_b$ is the space of bounded continuous functions equipped with the supremum norm. These are consequences of Proposition \ref{prop:3.7}, Proposition \ref{prop:3.8}, and Corollary \ref{cor:4.6}.

\begin{equation} \label{eq:4.10}
\begin{tikzcd}[row sep=7em, column sep=7em, every label/.append style={font=\normalsize}]
Y
	\arrow{dr}{\text{id},\;\del_\phi}
	\arrow{d}[xshift=-9.2ex, yshift=-.2ex]{D_{\beta}+1}[sloped, description]{\cong}
	\arrow{r}{D_\rho}
& \tilde{V} 
	\arrow{dr}{\text{id},\;\del_\phi}
	\arrow[hook]{r}{\text{id}}
& C_b \\
\tilde{Y}
	\arrow[shift left=1.2]{r}{\text{id},\;\del_\phi,\;D_\rho}
	\arrow[shift right=1.2]{r}[yshift=-4ex]{L_{-}}[description]{\cong}
& V
	\arrow[hook]{u}[right]{\text{id}}
	\arrow[shift left=1.2]{r}{D_\rho}
	\arrow[shift right=1.2]{r}[yshift=-4ex]{L_+}[description]{\cong}
& Z 
\end{tikzcd}
\end{equation}

\subsection{Solvability of the linearized equation} \label{subsec:4.3}
We rewrite the linearized operator \eqref{eq:2.11}, in terms of $g$.
\[\mcal{L} f = (D_\rho + 2\mu - 1)^2 g + (\mu \del_\phi)^2 g + (2\mu - 1) \beta \del_\phi (D_\beta + 1)^{-1} g.\]
We decompose the last term as
\begin{align} \label{eq:4.11} \begin{split}
\beta \del_\phi (D_\beta + 1)^{-1} &= \chi (\beta \del_\phi) (D_\beta + 1)^{-1} + (1 - \chi) (\beta \del_\phi) (D_\beta + 1)^{-1} \\
&= \chi + \chi (D_\rho - 1)(D_\beta + 1)^{-1} + (1 - \chi) (\beta \del_\phi) (D_\beta + 1)^{-1} \\
&= \chi + (D_\rho - 1)\chi (D_\beta + 1)^{-1} + (D_\beta \chi) (D_\beta + 1)^{-1} + (1 - \chi) (\beta \del_\phi) (D_\beta + 1)^{-1}.
\end{split} \end{align}
By the definition of the function spaces, the operator $L_{n,+} L_{n,-}: \tilde{Y}_n \to Z_n$ is an isometry for each $n \in \Z$. Thus, the operator $L_+ L_-: \tilde{Y} \to Z$ is also an isometry. We write
\begin{align*} \begin{split}
L_{n,+} L_{n,-} &= (-D_\beta + \textrm{i}n\beta + 2\mu - 1 + a_n(\beta)) (-D_\beta + \textrm{i}n\beta + 2\mu - 1 - a_n(\beta)) \\
&= (-D_\beta + \textrm{i}n\beta + 2\mu - 1)^2 - a_n^2 + D_\beta a_n \\
&= (-D_\beta + \textrm{i}n\beta + 2\mu - 1)^2 - n^2 \mu^2 + (2\mu - 1) \chi_n (\beta).
\end{split} \end{align*}
Hence,
\begin{equation} \label{eq:4.12}
	L_+ L_- = (D_\rho + 2\mu - 1)^2 + (\mu \del_\phi)^2 + (2\mu - 1) \chi.
\end{equation}
Combining \eqref{eq:4.11} and \eqref{eq:4.12}, we get
\[\mcal{L} f = \Big\{ L_+ L_- + (2\mu - 1) \Big( (D_\rho - 1)\chi (D_\beta + 1)^{-1} + (D_\beta \chi) (D_\beta + 1)^{-1} + (1 - \chi) (\beta \del_\phi) (D_\beta + 1)^{-1} \Big) \Big\} g.\]
Let $u = L_- g$ and $v = L_+^{-1} w$, both lie in $V$. Then,
\begin{align*} \begin{split}
&\mcal{L} f = w \iff \\
&u + (2\mu - 1) L_+^{-1} \Big( (D_\rho - 1) \chi (D_\beta + 1)^{-1} + (D_\beta \chi) (D_\beta + 1)^{-1} + (1-\chi) (\beta \del_\phi) (D_\beta + 1)^{-1} \Big) L_-^{-1} u = v.
\end{split} \end{align*}
We define
\[\mcal{K}_1 = \chi (D_\beta + 1)^{-1} L_-^{-1},~\mcal{K}_2 = (D_\beta \chi) (D_\beta + 1)^{-1} L_-^{-1},~\mcal{K}_3 = (1-\chi) (\beta \del_\phi) (D_\beta + 1)^{-1} L_-^{-1},\]
and their $n$-th Fourier mode versions
\[\mcal{K}_{n,1} = \chi_n (D_\beta + 1)^{-1} L_{n,-}^{-1},~\mcal{K}_{n,2} = (D_\beta \chi_n) (D_\beta + 1)^{-1} L_{n,-}^{-1},~\mcal{K}_{n,3} = (1-\chi_n) (\textrm{i} n\beta) (D_\beta + 1)^{-1} L_{n,-}^{-1}.\]
The equation can be written as
\begin{align} \label{eq:4.13} \begin{split}
	\mcal{L} f = w &\iff u + (2\mu - 1) \Big( L_+^{-1} (-D_\beta + \textrm{i} n\beta - 1) \mcal{K}_1 + L_+^{-1} \mcal{K}_2 + L_+^{-1} \mcal{K}_3 \Big) u = v \\
	&\iff \hat{u}_n + (2\mu - 1) \Big( L_{n,+}^{-1} (-D_\beta + \textrm{i} n\beta - 1) \mcal{K}_{n,1} + L_{n,+}^{-1} \mcal{K}_{n,2} + L_{n,+}^{-1} \mcal{K}_{n,3} \Big) \hat{u}_n = \hat{v}_n
\end{split} \end{align}
For convenience, we put
\[\mcal{K}_n \triangleq (2\mu - 1) \Big( L_{n,+}^{-1} (-D_\beta + \textrm{i} n\beta - 1) \mcal{K}_{n,1} + L_{n,+}^{-1} \mcal{K}_{n,2} +  L_{n,+}^{-1} \mcal{K}_{n,3} \Big).\]
\begin{proof}[Proof of Theorem \ref{thm:4.1}]
According to \eqref{eq:4.13}, it suffices to show that, operators $\text{id} + \mcal{K}_n$ are invertible on $\mcal{C}^\delta_0$ for each $n\neq 0$ (since the $n=0$ case is trivial), and $\norm{(\text{id} + \mcal{K}_n)^{-1}}_{\mcal{C}^\delta_0, \mcal{C}^\delta_0}$ are uniformly bounded in $n$. From Lemma \ref{lem:3.6} and proof of Proposition \ref{prop:3.8}, we know that
\begin{equation} \label{eq:4.14}
\norm{L_{n,+}^{-1}}_{\mcal{C}^\delta, \mcal{C}^\delta_0} \lesssim |n|^{-1}, ~\norm{L_{n,+}^{-1} (-D_\beta + \textrm{i} n\beta - 1)}_{\mcal{C}^\delta, \mcal{C}^\delta_0} \lesssim 1.
\end{equation}
By Lemma \ref{lem:4.8}, $\mcal{K}_n$ is a compact operator. We already proved in the section \ref{subsec:4.1} that the solution to the linearized problem is unique. By Fredholm theory, $\text{id} + \mcal{K}_n$ is invertible and has a bounded inverse for each $n\in \Z$.

It remains to show that the inverse is uniformly bounded in $n\in\Z$. Lemma \ref{lem:4.7} below and \eqref{eq:4.14} yield
\[\norm{\mcal{K}_{n}}_{\mcal{C}^\delta_0, \mcal{C}^\delta_0} \lesssim |n|^{-1}.\]
There is a $N_0 \in \Z_+$ such that 
\[\norm{\mcal{K}_{n}}_{\mcal{C}^\delta_0, \mcal{C}^\delta_0} \leq \frac{1}{2}, \quad |n| \geq  N_0.\]
Therefore,
\[\norm{(\text{id} + \mcal{K}_{n})^{-1}}_{\mcal{C}^\delta_0, \mcal{C}^\delta_0} \leq 2, \quad |n| \geq N_0. \qedhere\]
\end{proof}

\begin{lemma} \label{lem:4.7}
$n\mcal{K}_{n,1}, n\mcal{K}_{n,2}$, and $\mcal{K}_{n,3}$ are uniformly bounded in $n\in \Z\setminus\{0\}$ as operators on $\mcal{C}^\delta_0$.
\end{lemma}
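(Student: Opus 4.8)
The plan is to realize each of the three operators as a short composition of maps whose uniform‑in‑$n$ operator norms are already under control, so that the lemma reduces to bookkeeping. Throughout I would work one Fourier mode at a time (the statement only concerns $n\neq 0$) and lean on two structural facts: $\mcal{C}^\delta$ is a Banach algebra (Proposition~\ref{prop:3.3}(a)), so multiplication by any $m\in\mcal{C}^\delta$ is bounded on $\mcal{C}^\delta_0$ with norm $\lesssim\norm{m}_{\mcal{C}^\delta}$ and $\mcal{C}^\delta_0\cdot\mcal{C}^\delta\hookrightarrow\mcal{C}^\delta_0$; and $(D_\beta+1)^{-1}$ is bounded on $\mcal{C}^\delta_0$ by Lemma~\ref{lem:4.4}.

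First I would dispatch $n\mcal{K}_{n,1}$ and $n\mcal{K}_{n,2}$, which need no cancellation. Write $n\mcal{K}_{n,1}=\chi_n\,(D_\beta+1)^{-1}\,(nL_{n,-}^{-1})$. By Lemma~\ref{lem:3.6}, $\norm{L_{n,-}^{-1}}_{\mcal{C}^\delta,\mcal{C}^\delta_0}\lesssim|n|^{-1}$, so $nL_{n,-}^{-1}$ sends $\mcal{C}^\delta_0$ into $\mcal{C}^\delta_0$ with norm $\lesssim 1$; then $(D_\beta+1)^{-1}$ preserves $\mcal{C}^\delta_0$; then multiplication by $\chi_n$ costs at most $\norm{\chi_n}_{\mcal{C}^\delta}\lesssim 1$ from Lemma~\ref{lem:3.4}. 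Hence $\norm{n\mcal{K}_{n,1}}_{\mcal{C}^\delta_0,\mcal{C}^\delta_0}\lesssim 1$, uniformly in $n\neq0$. The estimate for $n\mcal{K}_{n,2}$ is identical with $\chi_n$ replaced by $D_\beta\chi_n$, using $\norm{D_\beta\chi_n}_{\mcal{C}^\delta}\lesssim 1$.

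The only genuinely delicate term is $\mcal{K}_{n,3}$. In the $n$-th Fourier mode $\beta\del_\phi$ becomes $\textrm{i}n\beta$, so $\mcal{K}_{n,3}=(1-\chi_n)\,(\textrm{i}n\beta)(D_\beta+1)^{-1}L_{n,-}^{-1}=(1-\chi_n)\,T_n$, where $T_nf(\beta)=\textrm{i}n\beta(D_\beta+1)^{-1}L_{n,-}^{-1}f(\beta)=\textrm{i}n\int_0^\beta L_{n,-}^{-1}f(b)\,db$. The multiplier $\beta$ grows at infinity, so the crude estimates above would cost a spurious factor $|n|$; the point is that $T_n$ is precisely the operator handled in Lemma~\ref{lem:4.5}, which in the $n$-th mode asserts $T_n:\mcal{C}^\delta_0\to\mcal{C}^\delta$ is bounded uniformly in $n\neq0$ — its proof extracting the oscillation $e^{\textrm{i}n(\beta-b)}$ inside $L_{n,-}^{-1}$ through an integration by parts. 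Since $1-\chi_n$ vanishes for $\beta\geq 2$, multiplication by it maps $\mcal{C}^\delta$ into $\mcal{C}^\delta_0$ with norm $\lesssim\norm{1-\chi_n}_{\mcal{C}^\delta}\leq 1+\norm{\chi_n}_{\mcal{C}^\delta}\lesssim 1$. Composing, $\norm{\mcal{K}_{n,3}}_{\mcal{C}^\delta_0,\mcal{C}^\delta_0}\lesssim 1$.

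The main obstacle is exactly the unbounded multiplier $\beta$ in $\mcal{K}_{n,3}$: Lemmas~\ref{lem:3.4} and~\ref{lem:3.6} alone lose a factor $|n|$ there, and it is the cancellation encoded in Lemma~\ref{lem:4.5} that rescues the uniform bound. Once that input is granted, the rest is a routine chain of composition inequalities, with the cutoff $1-\chi_n$ serving only to restore vanishing at infinity.
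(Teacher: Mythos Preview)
Your proof is correct and follows exactly the paper's approach: the paper's own argument simply cites Lemma~\ref{lem:3.6}, Proposition~\ref{prop:3.3}(a), Lemma~\ref{lem:4.4}, and Lemma~\ref{lem:4.5} together with the uniform $\mcal{C}^\delta$-bounds on $\chi_n$ and $D_\beta\chi_n$, and you have spelled out precisely how these pieces compose. The only nontrivial point---that the unbounded multiplier $\beta$ in $\mcal{K}_{n,3}$ is absorbed by Lemma~\ref{lem:4.5}---is the one you correctly isolated.
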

\begin{proof}
Since $\chi_n$ and $D_\beta \chi_n$ are bounded in $\mcal{C}^\delta$, uniformly in $n\in\Z$, the desired results follow from Lemma \ref{lem:3.6}, Proposition \ref{prop:3.3}(a), Lemma \ref{lem:4.4}, and Lemma \ref{lem:4.5}.
\end{proof}

\begin{lemma} \label{lem:4.8}
$\mcal{K}_{n,1}, \mcal{K}_{n,2}$, and $\mcal{K}_{n,3}$ are compact operators on $\mcal{C}^\delta_0$.
\end{lemma}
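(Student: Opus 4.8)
The plan is to verify, for each of the three operators, an Arzelà--Ascoli-type compactness criterion in $\mcal{C}^\delta_0[0,\infty]$, namely: a bounded set $S\subset\mcal{C}^\delta_0[0,\infty]$ is relatively compact provided it is equicontinuous on every compact interval $[0,R]$ and \emph{uniformly tight at infinity}, in the sense that $\lim_{R\to\infty}\sup_{f\in S}\sup_{\beta\ge R}\langle\beta\rangle^\delta|f(\beta)|=0$. This follows by applying Arzelà--Ascoli to $\{\langle\beta\rangle^\delta f:f\in S\}$ on each $[0,R]$, diagonalizing in $R$, and using tightness to make the extracted subsequence uniformly Cauchy on all of $[0,\infty)$; I will record this short argument. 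Two reductions streamline matters. First, if an operator's range consists of functions supported in a fixed bounded interval $[0,R_0]$, then tightness is automatic and, since $\|f\|_{\mcal{C}^\delta_0}\simeq\|\langle\beta\rangle^\delta f\|_{\sup}\simeq\|f\|_{\sup}$ for such $f$, compactness follows from a uniform $C^1([0,R_0])$ bound on the image of the unit ball (Arzelà--Ascoli on $C([0,R_0])$). Second, the case $n=0$ is vacuous since $\chi_0\equiv0$ annihilates all three operators. So fix $n\ne0$, let $f$ run over the unit ball of $\mcal{C}^\delta_0$, and set $g:=L_{n,-}^{-1}f$, $\phi:=(D_\beta+1)^{-1}g$; by Lemmas \ref{lem:3.6} and \ref{lem:4.4} these belong to $\mcal{C}^\delta_0$ with norms $\lesssim_n\|f\|_{\mcal{C}^\delta_0}$, where the subscript $n$ records that the implied constant may depend on $n$ --- admissible here, since compactness is asserted for each fixed $n$.

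For $\mcal{K}_{n,2}$ and $\mcal{K}_{n,3}$ the mechanism is the smoothing of $(D_\beta+1)^{-1}$. From $(D_\beta+1)\phi=g$ one gets $\beta\,\del_\beta\phi=g-\phi$, hence $\phi\in C^1((0,\infty))$ with $|\del_\beta\phi(\beta)|\le\beta^{-1}(|g(\beta)|+|\phi(\beta)|)\lesssim_n\beta^{-1}\|f\|_{\mcal{C}^\delta_0}$, so $\phi$ is bounded in $C^1([1,R])$ for each $R$, uniformly in $f$; moreover $\del_\beta(\textrm{i}n\beta\,\phi)=\textrm{i}n\phi+\textrm{i}n(g-\phi)=\textrm{i}n\,g$, so $\textrm{i}n\beta\,\phi\in C^1([0,\infty))$ with $|\del_\beta(\textrm{i}n\beta\,\phi)|=|n|\,|g|\lesssim_n\|f\|_{\mcal{C}^\delta_0}$ and $\textrm{i}n\beta\,\phi=0$ at $\beta=0$. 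Since $D_\beta\chi_n=\beta\chi_n'$ vanishes outside $[1,2]$ (Lemma \ref{lem:3.4} and the explicit form of $\chi_n$), $\mcal{K}_{n,2}f=(D_\beta\chi_n)\phi$ is supported in $[1,2]$ and bounded there in $C^1$ uniformly in $f$; since $1-\chi_n$ vanishes outside $[0,2]$, $\mcal{K}_{n,3}f=(1-\chi_n)\,\textrm{i}n\beta\,\phi$ is supported in $[0,2]$ and bounded there in $C^1$ (here it is essential that the factor $\textrm{i}n\beta$ is multiplied into $\phi$, since $\phi$ itself is only $C^1$ away from $\beta=0$ whereas $\textrm{i}n\beta\,\phi$ is $C^1$ up to $\beta=0$). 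By the first reduction, $\mcal{K}_{n,2}$ and $\mcal{K}_{n,3}$ are compact.

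It remains to treat $\mcal{K}_{n,1}f=\chi_n\phi$, which equals $\phi$ on $[2,\infty)$, so its range is not compactly supported and tightness must be checked by hand --- this is the main obstacle. Equicontinuity on compacts is immediate from the $C^1$ bound on $\phi$. For tightness, membership of $\phi$ in $\mcal{C}^\delta_0$ only gives $\langle\beta\rangle^\delta|\phi(\beta)|=O(1)$, not $o(1)$, so a soft argument fails; the decay must be improved by exploiting the oscillation of $g$. From the explicit formulas \eqref{eq:3.5} and the estimates on $W_n$ in Lemma \ref{lem:3.5}, one can write, for $\beta\ge2$, $g(\beta)=e^{\textrm{i}n\beta}\psi_n(\beta)$ with $\psi_n\in C^1([2,\infty))$, $|\psi_n(\beta)|\lesssim_n\beta^{-\delta}\|f\|_{\mcal{C}^\delta_0}$ and $|\psi_n'(\beta)|\lesssim_n\beta^{-1-\delta}\|f\|_{\mcal{C}^\delta_0}$ --- the extra power in the derivative coming from differentiating the algebraic prefactor in \eqref{eq:3.5} and using $|f(b)|\lesssim b^{-\delta}$. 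Then for $\beta\ge2$,
\[
\phi(\beta)=\frac1\beta\int_0^\beta g=\frac1\beta\int_0^2 g+\frac1\beta\int_2^\beta e^{\textrm{i}nb}\psi_n(b)\,db,
\]
and integrating by parts in $e^{\textrm{i}nb}$, using $\psi_n'\in L^1([2,\infty))$ and $\psi_n(\beta)\to0$, yields $\big|\int_2^\beta e^{\textrm{i}nb}\psi_n(b)\,db\big|\lesssim_n\|f\|_{\mcal{C}^\delta_0}$ uniformly in $\beta\ge2$. Hence $|\phi(\beta)|\lesssim_n\beta^{-1}\|f\|_{\mcal{C}^\delta_0}$, so $\langle\beta\rangle^\delta|\mcal{K}_{n,1}f(\beta)|\lesssim_n\beta^{\delta-1}\|f\|_{\mcal{C}^\delta_0}\to0$ uniformly over the unit ball as $\beta\to\infty$, since $\delta<1$; this establishes tightness, so $\mcal{K}_{n,1}$ is compact. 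The case $|n|=1$ is handled identically using the relevant line of \eqref{eq:3.5}. I expect the only non-routine point to be extracting the representation $g=e^{\textrm{i}n\beta}\psi_n$ with the derivative bound on $\psi_n$ from \eqref{eq:3.5} and Lemma \ref{lem:3.5}; the rest is soft.
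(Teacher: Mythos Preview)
Your proof is correct and follows essentially the same Arzel\`a--Ascoli strategy as the paper: derivative bounds on $\phi=(D_\beta+1)^{-1}L_{n,-}^{-1}f$ and on $\textrm{i}n\beta\,\phi$ give equicontinuity, compact support handles $\mcal{K}_{n,2},\mcal{K}_{n,3}$, and a $\beta^{-1}$ decay estimate on $\phi$ gives tightness for $\mcal{K}_{n,1}$. The only difference is that for this last decay you unpack the explicit formula \eqref{eq:3.5} and integrate by parts, whereas the paper simply invokes Lemma~\ref{lem:4.5} (whose proof already contains the same oscillation argument via $\textrm{i}n\beta=D_\beta+L_{n,-}+a_n$); citing that lemma would shorten your argument considerably.
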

\begin{proof}
By Lemma \ref{lem:3.6} and \ref{lem:4.4},
\begin{align} \label{eq:4.15} \begin{split}
\sup_{\beta \geq 1} |\langle \beta \rangle^\delta \del_\beta (D_\beta + 1)^{-1} L_{n,-}^{-1} f(\beta)| &\leq \norm{D_\beta (D_\beta + 1)^{-1} L_{n,-}^{-1} f}_{\mcal{C}^\delta} \\
&\leq \norm{L_{n,-}^{-1} f}_{\mcal{C}^\delta} + \norm{(D_\beta + 1)^{-1} L_{n,-}^{-1} f}_{\mcal{C}^\delta} \lesssim \norm{f}_{\mcal{C}^\delta}.
\end{split} \end{align}
From Lemma \ref{lem:4.5},
\begin{equation} \label{eq:4.16}
|(D_\beta + 1)^{-1} L_{n,-}^{-1} f(\beta)| \lesssim \beta^{-1} \norm{f}_{\mcal{C}^\delta}.
\end{equation}
Lastly,
\begin{equation} \label{eq:4.17}
\abs{\del_\beta \big\{ \textrm{i}n \beta (D_\beta + 1)^{-1} L_{n,-}^{-1} f \big\}(\beta)} = \abs{n \del_\beta \int_0^\beta L_{n,-} f(b) \,db} = |n L_{n,-}^{-1} f(\beta)| \lesssim \norm{f}_{\mcal{C}^\delta}.
\end{equation}
Suppose $\{f_k\}_k$ is a bounded sequence in $\mcal{C}^\delta[0,\infty]$. Lemma \ref{lem:4.7} implies that $\{\langle \beta \rangle^\delta \mcal{K}_{n,1} f_k\}_k$ and $\{\langle \beta \rangle^\delta \mcal{K}_{n,2} f_k\}_k$ are uniformly bounded. From \eqref{eq:4.15} and \eqref{eq:4.16}, we know that $\{\langle \beta \rangle^\delta \mcal{K}_{n,1} f_k\}_k$ and $\{\langle \beta \rangle^\delta \mcal{K}_{n,2} f_k\}_k$ are equicontinuous. By Arzelà–Ascoli theorem and the diagonal argument, there are subsequences of $\{\langle \beta \rangle^\delta \mcal{K}_{n,1} f_k\}_k$ and $\{\langle \beta \rangle^\delta \mcal{K}_{n,2} f_k\}_k$ that converge uniformly in every interval $[0,M]$. According to \eqref{eq:4.16}, $|\mcal{K}_{n,i} f_k(\beta)| \lesssim \beta^{-1}$ uniformly in $k$ for each $i=1,2$. Therefore, the convergence of the subsequences of $\{\langle \beta \rangle^\delta \mcal{K}_{n,i} f_k\}_k$ ($i=1,2$) are uniform in $[0,\infty]$. It proves that $\mcal{K}_{n,i}$ ($i=1,2$) are compact on $\mcal{C}^\delta_0$.

From \eqref{eq:4.17}, we know that $\{\mcal{K}_{n,3} f_k\}_k$ are uniformly bounded and equicontinuous in $\mcal{C}[0,2]$. By Arzelà–Ascoli theorem, there is a subsequence that converges uniformly. Since the functions $\mcal{K}_{n,3} f_k$ are supported in $[0,2]$, the uniform convergence implies convergence in the $\mcal{C}^\delta$-norm.
\end{proof}

\section{Proof of Main Results}
\subsection{Nonlinear Analysis: Proof of Theorem \ref{thm:2.1}} \label{subsec:5.1}
For convenience, $f$ in the nonlinear analysis represents the perturbation. So, the original $f$ is $f_0$ plus $f$ used here. We define
\[\mcal{N}(f) \triangleq \mcal{F}(\Gamma_0, f_0 + f) - \frac{\delta \mcal{F}}{\delta f}\bigg\vert_{(\Gamma_0, f_0)} \hspace{-1.5em} (f) = \mcal{F}(\Gamma_0, f_0 + f) - \mu^{-1} \mcal{L}(f).\]
Then,
\begin{equation} \label{eq:5.1}
\mcal{F}(\Gamma_0 + \Gamma, f_0 + f) = 0 \iff \mu^{-1} \mcal{L}(f) = \mcal{N}_3(f_0 + f) \Gamma - \mcal{N}(f).
\end{equation}

\begin{prop}
For sufficiently small $\epsilon > 0$,
\[\norm{f}_{Y} < \epsilon \implies \mcal{F}(\Gamma_0 + \Gamma, f_0 + f) \in Z.\]
\end{prop}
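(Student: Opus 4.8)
The plan is to work directly from the definition \eqref{eq:2.9}--\eqref{eq:2.10},
\[
\mcal{F}(\Gamma_0+\Gamma,\, f_0+f) \;=\; \mu(D_\rho+2\mu-1)\mcal{N}_1(f_0+f) \;+\; \mu\,\del_\phi\mcal{N}_2(f_0+f) \;-\; \mcal{N}_3(f_0+f)\,(\Gamma_0+\Gamma),
\]
and to show each of the three summands lies in $Z$ (here, as throughout \S\ref{subsec:5.1}, $f$ and $\Gamma$ denote perturbations, with $\Gamma\in X$). Put $g=(D_\beta+1)f$, so $\norm{g}_{\tilde Y}=\norm{f}_Y<\epsilon$. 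By Corollary~\ref{cor:4.6} and Proposition~\ref{prop:3.7}, the functions $f,\ \del_\phi f,\ g,\ \del_\phi g,\ D_\rho g$ all lie in $V$, while $D_\rho f$ lies in $\tilde V$, with norms $\lesssim\epsilon$. Recalling \eqref{eq:2.12}, $\mcal{N}_i(f_0+f)=F_i(\mbf h)$ with $\mbf h=\mbf h_0+\tilde{\mbf h}$, where the entries of $\tilde{\mbf h}$ are $h-1=2\mu f-g$, $\del_\phi h=2\mu\del_\phi f-\del_\phi g$, $\del_\phi f$, $(D_\rho+2\mu)h-2\mu$, and $(D_\rho+2\mu-1)f-1$. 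The first three lie in $V$; the last two lie only in $\tilde V$, the obstruction being the $D_\rho f$ contribution. Since $\tilde V\hookrightarrow C([0,\infty]\times\mbb{T})$, for $\epsilon$ small $\mbf h$ stays in the region where $F_1,F_2,F_3$ are analytic, so the Taylor expansions below converge, and $\tilde V$ is a Banach algebra with unit (Proposition~\ref{prop:3.3}(b)).

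The $\mcal{N}_3$ and $\mcal{N}_2$ terms are routine. From the expansion of $F_3(\mbf z)=z_4z_5^{-\frac1{2\mu}}$ at $\mbf h_0=(1,0,0,2\mu,1)$ and the algebra property, $\mcal{N}_3(f_0+f)-2\mu\in\tilde V$, so $\mcal{N}_3(f_0+f)(\Gamma_0+\Gamma)=2\mu\Gamma_0+2\mu\Gamma+(\text{element of }\tilde V)\Gamma_0+(\text{element of }\tilde V)\Gamma$ lies in $Z$, using that constants belong to $Z$ (as $L_{0,+}$ preserves constants), $\tilde V\hookrightarrow Z$, $X=\mcal{A}^{-\frac12}\hookrightarrow\mcal{A}^{-\frac12}\mcal{C}^\delta\hookrightarrow Z$, and $\tilde V\cdot X\hookrightarrow\mcal{A}^{-\frac12}\mcal{C}^\delta\hookrightarrow Z$ (Propositions~\ref{prop:3.3}(c) and \ref{prop:3.8}). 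For $F_2(\mbf z)=(z_3z_4-z_2z_5)/z_1$, in every monomial of $z_3z_4-z_2z_5$ the factor $z_4$ or $z_5$ is multiplied by $z_3=\del_\phi f\in V$ or $z_2=\del_\phi h\in V$, and $1/z_1\in\tilde V$; hence $V\cdot\tilde V\hookrightarrow V$ gives $\mcal{N}_2(f_0+f)=F_2(\mbf h)-F_2(\mbf h_0)\in V\subset\tilde V$, and $\mu\del_\phi\mcal{N}_2(f_0+f)\in Z$ by $\norm{\del_\phi}_{\tilde V,Z}\lesssim1$.

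The delicate term is $\mu(D_\rho+2\mu-1)\mcal{N}_1(f_0+f)$, and this is the step I expect to be the main obstacle: a naive bound only places $\mcal{N}_1(f_0+f)$ in $\tilde V$, which is insufficient since $D_\rho:\tilde V\to Z$ fails while only $D_\rho:V\to Z$ holds (Proposition~\ref{prop:3.8}). The key observation is that $F_1$ depends on $(z_4,z_5)$ only through the ratio $z_5/z_4$, and that although $z_4-2\mu$ and $z_5-1$ lie only in $\tilde V$, the combination $z_4-2\mu z_5$ lies in $V$: a direct computation gives $z_4-2\mu z_5=2\mu f-(D_\rho+2\mu)g$, which is in $V$ because $D_\rho g\in V$ by Proposition~\ref{prop:3.7}. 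Writing $z_4=2\mu z_5+r$ with $r\in V$ and $z_5=1+(\text{small element of }\tilde V)$, one has $z_5/z_4=(2\mu+r/z_5)^{-1}$; since $1/z_5\in\tilde V$ and $V\cdot\tilde V\hookrightarrow V$, $r/z_5\in V$, and inverting in the unital Banach algebra $V$ yields $z_5/z_4-\tfrac1{2\mu}\in V$. Feeding this together with $z_1-1,\,z_2,\,z_3\in V$ into the convergent expansion of the analytic map $(z_1,z_2,z_3,w)\mapsto(4z_1-z_2^2/z_1)w-z_2z_3/z_1$ at $(1,0,0,\tfrac1{2\mu})$ gives $\mcal{N}_1(f_0+f)-\tfrac2\mu\in V$. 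Then $\mu(D_\rho+2\mu-1)\mcal{N}_1(f_0+f)\in Z$ by $\norm{D_\rho}_{V,Z},\ \norm{\mathrm{id}}_{V,\tilde V},\ \norm{\mathrm{id}}_{\tilde V,Z}\lesssim1$ and, once more, the fact that constants lie in $Z$. Collecting the three contributions proves $\mcal{F}(\Gamma_0+\Gamma,f_0+f)\in Z$; tracking norms along the same computation simultaneously yields $\norm{\mcal{F}(\Gamma_0+\Gamma,f_0+f)}_Z\lesssim1$, and, separating the linear and higher-order parts of each expansion, the estimates $\norm{\mcal{N}_3(f_0+f)\Gamma}_Z\lesssim\norm{\Gamma}_X$ and $\norm{\mcal{N}(f)}_Z\lesssim\norm{f}_Y$ that feed the subsequent contraction argument.
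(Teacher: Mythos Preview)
Your proof is correct and follows the same three-term decomposition as the paper's. The one place you add genuine content is in showing $\mcal{N}_1(f_0+f)-\tfrac{2}{\mu}\in V$ rather than merely $\tilde V$: the paper simply asserts $\mcal{N}_1(f_0+f)\in V$ after listing which entries of $\mbf h$ lie in $V$ versus $\tilde V$, but since $z_4$ and $z_5$ individually lie only in $\tilde V$ this is not immediate from Lemma~\ref{lem:5.3} alone; your identity $z_4-2\mu z_5=2\mu f-(D_\rho+2\mu)g\in V$ (using $D_\rho g\in V$ from Proposition~\ref{prop:3.7}) is precisely the algebraic cancellation that makes the paper's claim go through. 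One minor slip in your closing sentence: the nonlinear remainder should satisfy $\norm{\mcal{N}(f)}_Z\lesssim\norm{f}_Y^2$, not $\lesssim\norm{f}_Y$, though this lies beyond the proposition itself.
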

\begin{proof}
We utilize operator bounds proven in Sections 3 and 4. The diagram \eqref{eq:4.10} provides a concise summary of those. Recall that the nonlinearities $\mcal{N}_j(f_0 + f) = F_j(\mbf{h}_0 + \mbf{h})$ for $j=1,2,3$ are functions of
\[\mbf{h} = (h, \del_\phi h, \del_\phi f, (D_\rho + 2\mu) h, (D_\rho + 2\mu - 1)f),\]
where $h = 2\mu f - (D_\beta + 1)f$ and $\mbf{h}_0 = (1, 0, 0, 2\mu, 1)$. From Proposition \ref{prop:3.7} and Corollary \ref{cor:4.6},
\[h, \del_\phi h, \del_\phi f \in V,~\text{and}~(D_\rho + 2\mu) h, (D_\rho + 2\mu - 1) f \in \tilde{V},\]
with the norms $\lesssim\epsilon$. Recall Proposition \ref{prop:3.3}(b) that $\tilde{V} \cdot \tilde{V} \hookrightarrow \tilde{V}$ and $V \cdot \tilde{V} \hookrightarrow V$. The elements of $\mbf{h}$ is contained in the Banach algebra $\tilde{V}$ and each nonlinear function $F_j$ is analytic around $\mbf{h}_0$. We deduce from Lemma \ref{lem:5.3} that, for sufficiently small $\epsilon > 0$,
\[\mcal{N}_1(f_0 + f) \in V, ~\mcal{N}_2(f_0 + f), ~\mcal{N}_3(f_0 + f) \in \tilde{V}.\]
For $\mcal{N}_1$, we use the nonlinear structure that
\[\mcal{N}_1(f_0 + f) = F_1(\mbf{h}_0 + \mbf{h}) = \underbrace{\bigg(4z_1 - \frac{z_2^2}{z_1} \bigg)}_{\in \, V} \underbrace{\frac{z_5}{z_4}}_{\in \,\tilde{V}} - \underbrace{\frac{z_2 z_3}{z_1}}_{\in \,V} \in V, \quad (z_1, \cdots, z_5) = \mbf{h}_0 + \mbf{h}.\]
According to Proposition \ref{prop:3.8} and Proposition \ref{prop:3.3}(c),
\[(D_\rho + 2\mu - 1)\mcal{N}_1(f_0 + f) \in Z,~\del_\phi \mcal{N}_2(f_0 + f) \in Z, ~(\Gamma_0 + \Gamma) \mcal{N}_3 (f_0 + f) \in \mcal{A}^{-\frac{1}{2}} \mcal{C}^\delta \hookrightarrow Z.\]
Summing up, $\mcal{F}(\Gamma_0 + \Gamma, f_0 + f) \in Z$.
\end{proof}

\begin{lemma} \label{lem:5.2}
There exists $\epsilon > 0$, such that for any $\norm{f}_Y, \norm{\tilde{f}}_Y < \epsilon$,
\[\norm{\mcal{N}_3(f_0 + f) - \mcal{N}_3(f_0 + \tilde{f})}_{\tilde{V}} \lesssim \norm{f-\tilde{f}}_Y~ \text{and}~\norm{\mcal{N}(f) - \mcal{N}(\tilde{f})}_Z \lesssim (\norm{f}_Y + \norm{\tilde{f}}_Y) \norm{f-\tilde{f}}_Y.\]
\end{lemma}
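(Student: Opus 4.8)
\emph{Proof proposal.} The plan is to present both $\mcal{N}_3(f_0+\cdot)$ and $\mcal{N}(\cdot)$ as compositions of a bounded \emph{affine} map into a Banach algebra with an \emph{analytic} function of five scalar variables, and to derive the two stated inequalities from one abstract estimate for such compositions. Recall from section~\ref{subsec:2.3} that $\mcal{N}_i(f_0+f) = F_i(\mbf{h}_0 + \mbf{h})$ with $\mbf{h}_0 = (1,0,0,2\mu,1)$, the $F_i$ analytic near $\mbf{h}_0$, and
\[
\mbf{h} = \mbf{h}(f) \triangleq \big( h,\ \del_\phi h,\ \del_\phi f,\ (D_\rho + 2\mu)h,\ (D_\rho + 2\mu - 1)f \big), \qquad h = (-D_\beta + 2\mu - 1)f,
\]
which depends \emph{linearly} on the perturbation $f$. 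By Proposition~\ref{prop:3.7} and Corollary~\ref{cor:4.6} (together with $V\hookrightarrow\tilde{V}$), the map $f\mapsto\mbf{h}(f)$ is bounded $Y\to\tilde{V}^5$, its first three components landing in the closed ideal $V\subset\tilde{V}$; hence $\norm{\mbf{h}(f)-\mbf{h}(\tilde{f})}_{\tilde{V}^5} = \norm{\mbf{h}(f-\tilde{f})}_{\tilde{V}^5} \lesssim \norm{f-\tilde{f}}_Y$, and for $\epsilon$ small $\mbf{h}_0+\mbf{h}(f)$ stays in the domain of analyticity of the $F_i$ (in particular its fifth slot is $1+O(\epsilon)$ in $\tilde{V}\hookrightarrow L^\infty$, so $\geq\tfrac12$ pointwise, and $z_5^{-1/2\mu}$ causes no trouble).

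The abstract ingredient I would isolate is: if $\mcal{B}$ is a commutative unital Banach algebra and $F(\mbf{z}) = \sum_\alpha c_\alpha(\mbf{z}-\mbf{z}_0)^\alpha$ is analytic on $\{|\mbf{z}-\mbf{z}_0|<r\}$, then for $\mbf{w},\tilde{\mbf{w}}\in\mcal{B}^k$ with $\max(\norm{\mbf{w}},\norm{\tilde{\mbf{w}}})\leq r'<r$ one has $\norm{F(\mbf{z}_0+\mbf{w})-F(\mbf{z}_0+\tilde{\mbf{w}})}_{\mcal{B}}\lesssim_{F,r'}\norm{\mbf{w}-\tilde{\mbf{w}}}_{\mcal{B}^k}$, and if moreover $c_\alpha=0$ whenever $|\alpha|\leq 1$ (i.e.\ $F$ vanishes to second order at $\mbf{z}_0$), the bound improves to $(\norm{\mbf{w}}+\norm{\tilde{\mbf{w}}})\norm{\mbf{w}-\tilde{\mbf{w}}}$. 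Both follow from the telescoping identity $\mbf{w}^\alpha-\tilde{\mbf{w}}^\alpha = \sum(\cdots)(w_j-\tilde{w}_j)(\cdots)$, submultiplicativity of $\norm{\cdot}_{\mcal{B}}$, and summing the convergent majorant $\sum_\alpha|c_\alpha|\,|\alpha|\,(r')^{|\alpha|-1}$. I will also need the refinement: if every monomial of $F-F(\mbf{z}_0)$ contains a factor from a distinguished set of slots valued in a closed ideal $\mcal{I}\subset\mcal{B}$, then the difference lies in $\mcal{I}$ and the estimates hold with $\norm{\cdot}_{\mcal{I}}$ on the left, using $\mcal{I}\cdot\mcal{B}\hookrightarrow\mcal{I}$.

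Granting this, the first inequality is immediate: apply the Lipschitz bound with $\mcal{B}=\tilde{V}$ (a unital Banach algebra by Proposition~\ref{prop:3.3}(b)) to $F_3(\mbf{z})=z_4 z_5^{-1/2\mu}$, analytic near $\mbf{h}_0$, together with $\norm{\mbf{h}(f)-\mbf{h}(\tilde{f})}_{\tilde{V}^5}\lesssim\norm{f-\tilde{f}}_Y$, to get $\norm{\mcal{N}_3(f_0+f)-\mcal{N}_3(f_0+\tilde{f})}_{\tilde{V}}\lesssim\norm{f-\tilde{f}}_Y$. For the second inequality, decompose $\mcal{N}(f) = \mcal{F}(\Gamma_0,f_0+f)-\mu^{-1}\mcal{L}(f) = \mu(D_\rho+2\mu-1)R_1(f)+\mu\del_\phi R_2(f)-\Gamma_0 R_3(f)$, where $R_i(f)=Q_i(\mbf{h}(f))$ and $Q_i(\mbf{w})\triangleq F_i(\mbf{h}_0+\mbf{w})-F_i(\mbf{h}_0)-\grad F_i(\mbf{h}_0)\cdot\mbf{w}$ is the second-order Taylor remainder, vanishing to order $2$ at $\mbf{w}=0$. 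The quadratic bound with $\mcal{B}=\tilde{V}$ applied to $Q_2,Q_3$ gives $\norm{R_i(f)-R_i(\tilde{f})}_{\tilde{V}}\lesssim(\norm{f}_Y+\norm{\tilde{f}}_Y)\norm{f-\tilde{f}}_Y$ for $i=2,3$; then Proposition~\ref{prop:3.8} ($\norm{\del_\phi}_{\tilde{V},Z}\lesssim 1$ and $\norm{\text{id}}_{\tilde{V},Z}\lesssim 1$, the latter absorbing the constant $\Gamma_0$) transfers these to $Z$-bounds on the last two terms of $\mcal{N}(f)-\mcal{N}(\tilde{f})$.

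The only delicate point — and the one I expect to be the main obstacle — is the term $\mu(D_\rho+2\mu-1)R_1(f)$: to place it in $Z$ via Proposition~\ref{prop:3.8} ($\norm{D_\rho}_{V,Z}\lesssim 1$, $\norm{\text{id}}_{V,Z}\lesssim 1$) I need $R_1(f)-R_1(\tilde{f})\in V$, \emph{not merely} $\tilde{V}$, with the quadratic bound. This rests on the same cancellation already used in the Proposition preceding this lemma: with $g=(D_\beta+1)f\in\tilde{Y}$, the combination $\xi\triangleq 2\mu(D_\rho+2\mu-1)f-(D_\rho+2\mu)h = -2\mu f+(D_\rho+2\mu)g$ is, by Proposition~\ref{prop:3.7} ($\norm{D_\rho}_{\tilde{Y},V}\lesssim 1$) and Corollary~\ref{cor:4.6}, a bounded \emph{linear} map $Y\to V$. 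Substituting $z_4 = 2\mu + 2\mu(D_\rho+2\mu-1)f-\xi$ and re-expressing $F_1$ in the coordinates $\big(h,\del_\phi h,\del_\phi f,\xi,(D_\rho+2\mu-1)f\big)$ — whose first four entries are $V$-valued and the last $\tilde{V}$-valued — one checks by direct substitution that the resulting analytic function $\tilde{F}_1$ satisfies $\tilde{F}_1(0,0,0,0,\,\cdot\,)\equiv\tfrac{2}{\mu}=F_1(\mbf{h}_0)$, so that every monomial of $\tilde{F}_1-\tfrac{2}{\mu}$, and hence of the second-order remainder in these coordinates, carries a factor from one of the four $V$-valued slots; since $V$ is a closed ideal in $\tilde{V}$ (Proposition~\ref{prop:3.3}(b)), the ideal refinement above yields $R_1(f)-R_1(\tilde{f})\in V$ with norm $\lesssim(\norm{f}_Y+\norm{\tilde{f}}_Y)\norm{f-\tilde{f}}_Y$. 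Summing the three contributions gives the second inequality; the only real work is verifying the abstract lemma with its ideal refinement and bookkeeping the change of coordinates for $F_1$.
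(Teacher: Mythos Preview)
Your approach is essentially the paper's: the one-line proof there just cites Lemma~\ref{lem:5.3} (your abstract Banach-algebra estimate) together with \eqref{eq:2.12}, and your telescoping argument for the abstract bound is exactly how Lemma~\ref{lem:5.3} is proved. The only substantive addition you make is your careful treatment of the $R_1$ term---the paper handles the fact that $\mcal{N}_1(f_0+f)\in V$ (rather than merely $\tilde{V}$) by assertion in the Proposition immediately preceding Lemma~\ref{lem:5.2}, invoking ``the algebra property of $\tilde{V}$ and $V$'' without spelling out the cancellation; your change of coordinates $z_4\mapsto\xi=2\mu z_5-z_4$ and the observation $\tilde{F}_1(1,0,0,0,z_5)\equiv\tfrac{2}{\mu}$ make that cancellation explicit and are correct (indeed $\xi=-2\mu f+(D_\rho+2\mu)g\in V$ by Proposition~\ref{prop:3.7} and Corollary~\ref{cor:4.6}). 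So your proof is correct and fills in a detail the paper leaves to the reader; no different route is taken.
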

\begin{proof}
It follows directly from Lemma \ref{lem:5.3} and \eqref{eq:2.12}.
\end{proof}

Finally, we prove the main result in the adapted coordinates.

\begin{proof}[Proof of Theorem \ref{thm:2.1}]
We show that there is a sufficiently small $\epsilon > 0$, such that for each $\norm{\Gamma}_{X} < \epsilon$, there exists a solution $f\in Y$ for $\mcal{F}(\Gamma_0 + \Gamma, f_0 + f) = 0$. From \eqref{eq:5.1}, it is the same as solving
\begin{equation} \label{eq:5.2}
f = \mu \mcal{L}^{-1}( \mcal{N}_3(f_0 + f) \Gamma - \mcal{N}(f) ).
\end{equation}
From the linear analysis; Theorem \ref{thm:4.1}, $\mcal{L}^{-1} : Z \to Y$ is a bounded operator. Let $\mcal{T}(f;\Gamma)$ be the right hand side of \eqref{eq:5.2}. From Lemma \ref{lem:5.2},
\[ \norm{\mcal{N}_3(f_0 + f)}_{\tilde{V}} \leq \norm{\mcal{N}_3(f_0)}_{\tilde{V}} + \norm{\mcal{N}_3(f_0 + f) - \mcal{N}_3(f_0)}_{\tilde{V}} \leq C(1 + \norm{f}_{\tilde{V}}), \]
and
\[ \norm{\mcal{N}(f)}_Z = \norm{\mcal{N}(f) - \mcal{N}(0)}_Z \leq C\norm{f}_{Y}^2.\]
Summing up,
\begin{align*}
\norm{\mcal{T}(f)}_Y &\lesssim \norm{\mcal{N}_3 (f_0 + f) \Gamma}_{Z} + \norm{\mcal{N}(f)}_Z \\
&\lesssim \norm{\mcal{N}_3 (f_0 + f)}_{\tilde{V}} \norm{\Gamma}_X + \norm{\mcal{N}(f)}_Z \\
&\leq C(\norm{\Gamma}_X + \norm{f}_Y \norm{\Gamma}_X + \norm{f}_Y^2).
\end{align*}
For $0<\epsilon < \frac{1}{4C^2 + 2C}$, take $\epsilon' = 2C\epsilon$, then $\mcal{T}$ maps the ball $B = \{f\in Y : \norm{f}_Y \leq \epsilon'\}$ into itself:
\[ \norm{\mcal{T}(f)}_Y \leq C\epsilon + C\epsilon \epsilon' + C \epsilon'^2 = C\epsilon( 1 + (4C^2 + 2C)\epsilon) < 2C\epsilon = \epsilon'. \]
Next, again from Lemma \ref{lem:5.2},
\begin{align*}
\norm{\mcal{T}(f) - \mcal{T}(\tilde{f})}_Y &\lesssim \norm{(\mcal{N}_3 (f_0 + f) - \mcal{N}_3 (f_0 + \tilde{f})) \Gamma}_Z + \norm{\mcal{N}(f) - \mcal{N}(\tilde{f})}_Z \\
&\lesssim \norm{\Gamma}_X \norm{f-\tilde{f}}_Y + (\norm{f}_Y + \norm{\tilde{f}}_Y) \norm{f-\tilde{f}}_Y \\
&\leq C'(\epsilon + \epsilon') \norm{f-\tilde{f}}_Y = C'(1+2C) \epsilon \norm{f-\tilde{f}}_Y,
\end{align*}
for any $f,\tilde{f} \in B$. When we take $0<\epsilon < \frac{1}{C'(1 + 2C)}$, then $\mcal{T} : B \to B$ is a contraction map. By the contraction mapping theorem, $\mcal{T}$ has exactly one fixed point $f\in B$, which is a unique solution to $\mcal{F}(\Gamma_0 + \Gamma, f_0 + f) = 0$ in $B$.
\end{proof}

\begin{lemma} \label{lem:5.3}
Suppose $F : U (\subset \C^k) \to \C$ is an analytic function in a neighborhood of a point $\mbf{f}_0 \in \C^k$. And $A$ is a unital Banach algebra over $\C$, consisting of functions $f: S \to \C$ where $S$ is a nonempty set. For $\mbf{f} = (f_1, \cdots, f_k)$ with $\norm{\mbf{f}}_{\mbf{A}} \triangleq \sum_{j=1}^k \norm{f_j}_A \ll 1$, we define a function $\mcal{N}(\mbf{f}) : S \to \C$ by
\[\mcal{N}(\mbf{f}) = F(\mbf{f}_0 + \mbf{f}) - F(\mbf{f}_0) - \grad F(\mbf{f}_0) \cdot \mbf{f}.\]
Then, there is a constant $\epsilon > 0$ such that
\vspace{-.5em}\begin{enumerate}[label=(\alph*)]
    \item $\norm{F(\mbf{f}_0 + \mbf{f}) - F(\mbf{f}_0 + \mbf{g})}_A \lesssim \norm{\mbf{f} - \mbf{g}}_{\mbf{A}}$, and
    \item $\norm{\mcal{N}(\mbf{f}) - \mcal{N}(\mbf{g})}_A \lesssim (\norm{\mbf{f}}_{\mbf{A}} + \norm{\mbf{g}}_{\mbf{A}}) \norm{\mbf{f} - \mbf{g}}_{\mbf{A}}$,
\end{enumerate}
\vspace{-.5em} for all $\norm{\mbf{f}}_{\mbf{A}}, \norm{\mbf{g}}_{\mbf{A}} < \epsilon$.
\end{lemma}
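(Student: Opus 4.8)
The plan is to reduce everything to the absolutely convergent Taylor expansion of $F$ and then substitute algebra elements, using submultiplicativity of the norm on $A$. Since $F$ is analytic in a neighborhood of $\mbf{f}_0 \in \C^k$, there is a polyradius $r > 0$ and scalars $(c_\alpha)_{\alpha \in \Z_{\geq 0}^k}$ with $F(\mbf{f}_0 + \mbf{z}) = \sum_\alpha c_\alpha \mbf{z}^\alpha$ on the polydisc of radius $r$; Cauchy estimates give $|c_\alpha| \lesssim r^{-|\alpha|}$, hence $\sum_\alpha |c_\alpha|\, t^{|\alpha|} < \infty$ for every $t < r$, and by construction $c_0 = F(\mbf{f}_0)$ while $\sum_{|\alpha| = 1} c_\alpha \mbf{z}^\alpha = \grad F(\mbf{f}_0) \cdot \mbf{z}$. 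For $\mbf{f} = (f_1,\dots,f_k) \in A^k$ with $\norm{\mbf{f}}_{\mbf{A}} = \sum_j \norm{f_j}_A$ strictly below $r$, the algebra property yields $\norm{\mbf{f}^\alpha}_A \le \prod_j \norm{f_j}_A^{\alpha_j} \le \norm{\mbf{f}}_{\mbf{A}}^{|\alpha|}$, so the series $\sum_\alpha c_\alpha \mbf{f}^\alpha$ converges absolutely in $A$. I would take this as the definition of $F(\mbf{f}_0 + \mbf{f}) \in A$, whereupon $\mcal{N}(\mbf{f}) = \sum_{|\alpha| \ge 2} c_\alpha \mbf{f}^\alpha$, because the terms with $|\alpha| \le 1$ are exactly the ones subtracted off.

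The one elementary ingredient is the telescoping identity for a difference of monomials: writing $\mbf{f}^\alpha$ as an ordered product $p_1 \cdots p_m$ of $m = |\alpha|$ factors drawn from $\{f_1,\dots,f_k\}$ and $\mbf{g}^\alpha = q_1 \cdots q_m$ with the matching factors from $\{g_1,\dots,g_k\}$, one has $\mbf{f}^\alpha - \mbf{g}^\alpha = \sum_{i=1}^m q_1 \cdots q_{i-1} (p_i - q_i)\, p_{i+1} \cdots p_m$, so $\norm{\mbf{f}^\alpha - \mbf{g}^\alpha}_A \le |\alpha|\, R^{|\alpha| - 1}\, \norm{\mbf{f} - \mbf{g}}_{\mbf{A}}$ with $R = \max\{\norm{\mbf{f}}_{\mbf{A}}, \norm{\mbf{g}}_{\mbf{A}}\}$. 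For (a) I would sum this against $|c_\alpha|$ over all $\alpha$ and bound the result by $\big(\sum_\alpha |c_\alpha|\,|\alpha|\,\epsilon^{|\alpha|-1}\big)\norm{\mbf{f} - \mbf{g}}_{\mbf{A}}$ whenever $\norm{\mbf{f}}_{\mbf{A}}, \norm{\mbf{g}}_{\mbf{A}} < \epsilon$. For (b) I would use the same identity but, since now $|\alpha| \ge 2$, write $R^{|\alpha|-1} \le R^{|\alpha|-2}\,(\norm{\mbf{f}}_{\mbf{A}} + \norm{\mbf{g}}_{\mbf{A}})$, which gives $\norm{\mcal{N}(\mbf{f}) - \mcal{N}(\mbf{g})}_A \le \big(\sum_{|\alpha|\ge 2} |c_\alpha|\,|\alpha|\,\epsilon^{|\alpha|-2}\big)\,(\norm{\mbf{f}}_{\mbf{A}} + \norm{\mbf{g}}_{\mbf{A}})\,\norm{\mbf{f}-\mbf{g}}_{\mbf{A}}$.

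I do not expect a genuine obstacle: the argument is quantitative bookkeeping once the power-series definition is in place. The only point requiring care is to fix $\epsilon$ strictly below the polyradius $r$, so that the three scalar series $\sum_\alpha |c_\alpha|\epsilon^{|\alpha|}$, $\sum_\alpha |c_\alpha|\,|\alpha|\,\epsilon^{|\alpha|-1}$, and $\sum_{|\alpha|\ge 2}|c_\alpha|\,|\alpha|\,\epsilon^{|\alpha|-2}$ all converge — which follows from $|c_\alpha| \lesssim r^{-|\alpha|}$ together with the root test, the number of multi-indices with $|\alpha| = m$ being only polynomial in $m$ — and to observe that these finite sums are exactly the implicit constants in (a) and (b). In particular they depend only on $F$, on $\mbf{f}_0$ (through $r$), and on $k$, and not on $S$ or on the particular algebra $A$, so the estimates hold in the stated uniform form.
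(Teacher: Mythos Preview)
Your argument is correct, but it proceeds differently from the paper. The paper first notes that the Taylor series of $F$ converges in $A$ on a small ball, then uses the Banach-algebra fundamental theorem of calculus
\[
F(\mbf{f}_0+\mbf{f}) - F(\mbf{f}_0+\mbf{g}) = \int_0^1 (\mbf{f}-\mbf{g})\cdot\grad F\big(\mbf{f}_0+\mbf{g}+t(\mbf{f}-\mbf{g})\big)\,dt,
\]
together with a uniform bound on $\grad F$ near $\mbf{f}_0$, to obtain (a); part (b) is then bootstrapped from (a) applied to $\grad F$, since $\mcal{N}(\mbf{f})-\mcal{N}(\mbf{g})$ involves $\grad F(\mbf{f}_0+\cdot)-\grad F(\mbf{f}_0)$ under the same integral. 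Your route instead stays at the level of power-series coefficients and handles the difference $\mbf{f}^\alpha-\mbf{g}^\alpha$ by a telescoping sum, summing the resulting bounds against $|c_\alpha|$. The paper's approach is shorter and more conceptual (it reduces (b) to an instance of (a)), while yours is more elementary in that it never invokes a Bochner integral or differentiation in $A$, and it makes the implied constants completely explicit as the scalar series $\sum_\alpha |c_\alpha|\,|\alpha|\,\epsilon^{|\alpha|-1}$ and $\sum_{|\alpha|\ge 2}|c_\alpha|\,|\alpha|\,\epsilon^{|\alpha|-2}$. Either method is adequate here.
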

\begin{proof}
Since $F$ is analytic at $\mbf{f}_0$ and $A$ is a Banach algebra, there is a $\epsilon > 0$ such that the Taylor series expansion
\[F(\mbf{f}_0 + \mbf{f}) = \sum_{\alpha} \frac{\del^\alpha F(\mbf{f}_0)}{\alpha !} \mbf{f}^\alpha\]
converges in $A$ for $\norm{\mbf{f}}_{\mbf{A}} < \epsilon$. Indeed, $F(\mbf{f})$, $\grad F(\mbf{f})$, $\grad^2 F(\mbf{f})$, $\cdots$ are bounded in $A$ for $\norm{\mbf{f}}_{\mbf{A}} < \epsilon$.
\[F(\mbf{f}_0 + \mbf{f}) - F(\mbf{f}_0 + \mbf{g}) = \int_0^1 \frac{d}{dt} F(\mbf{f}_0 + \mbf{g} + t(\mbf{f} - \mbf{g})) \,dt = \int_0^1 (\mbf{f} - \mbf{g}) \cdot \grad F(\mbf{f}_0 + \mbf{g} + t(\mbf{f} - \mbf{g})) \,dt.\]
Recall that $\norm{\grad F(\mbf{f}_0 + \mbf{g} + t(\mbf{f} - \mbf{g}))}_{\mbf{A}} \leq C < \infty$ for $\norm{\mbf{f}}_{\mbf{A}}, \norm{\mbf{g}}_{\mbf{A}} < \epsilon$ and $t\in [0,1]$. We get (a) by using the fact that $A$ is an algebra. Similarly,
\[\mcal{N}(\mbf{f}) - \mcal{N}(\mbf{g}) = \int_0^1 (\mbf{f} - \mbf{g}) \cdot \big(\grad F(\mbf{f}_0 + \mbf{g} + t(\mbf{f} - \mbf{g})) - \grad F(\mbf{f}_0)\big) \,dt. \]
Since $\norm{\grad F(\mbf{f}_0 + \mbf{h}) - \grad F(\mbf{f}_0)}_{\mbf{A}} \lesssim \norm{\mbf{h}}_{\mbf{A}}$ by applying part (a) for $\grad F$, we obtain the estimate (b).
\end{proof}

\subsection{Recovering the Original Coordinates} \label{subsec:5.2}
From here, we return to the original notation that $f$ is $f_0 \equiv \frac{1}{2\mu - 1}$ plus the perturbation. Recall section \ref{sec:2} that
\begin{equation} \label{eq:5.3}
\Psi = \beta^{1-2\mu} f, \quad \del_\beta \Psi = - \mu r^2, \quad \theta = \beta + \phi.
\end{equation}
Since $\norm{f - f_0}_Y < \epsilon \ll 1$, we get
\[\norm{f - f_0}_{\sup}, ~ \norm{\del_\phi f}_{\sup}, ~ \norm{D_\rho f}_{\sup}, ~ \norm{g - f_0}_{\sup}, ~ \norm{\del_\phi g}_{\sup}, ~ \norm{D_\rho g}_{\sup} \lesssim \epsilon \ll 1.\]
We used Proposition \ref{prop:3.7}, Corollary \ref{cor:4.6}, and that $V$ continuously embeds into $C_b([0,\infty] \times \mbb{T})$; the space of bounded continuous functions on $(\beta,\phi) \in [0,\infty] \times \mbb{T}$. Now, we show that the original polar coordinates are recovered by \eqref{eq:5.3}.

\begin{prop}
There is a constant $\epsilon > 0$, so that whenever $\norm{f - f_0}_Y < \epsilon$, the change of coordinates $(\beta, \phi) \in \R_+ \times \mbb{T} \mapsto (r,\theta) \in \R_+ \times \mbb{T}$ described by \eqref{eq:5.3} is a $C^1$-diffeomorphism. In particular, for each $\theta\in\mbb{T}$, $\beta \mapsto r(\beta, \theta-\beta)$ is strictly decreasing, and
\begin{equation} \label{eq:5.4}
\lim_{\beta \to \infty} r(\beta, \theta-\beta) = 0, \quad \lim_{\beta \to 0+} r(\beta, \theta-\beta) = \infty.
\end{equation}
\end{prop}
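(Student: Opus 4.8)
The plan is to read off the statement from the identity $r^{2}=-\tfrac{1}{\mu}\del_\beta\Psi=\tfrac{1}{\mu}\beta^{-2\mu}h$ with $h=(-D_\beta+2\mu-1)f$, exploiting that $f$ equals $f_0\equiv(2\mu-1)^{-1}$ plus a term small in $Y$. First I would note $h_0:=(-D_\beta+2\mu-1)f_0=1$ and $h-h_0=2\mu(f-f_0)-(g-g_0)$ with $g=(D_\beta+1)f$. Since $\norm{\text{id}}_{Y,V}\lesssim1$ and $\norm{\text{id}}_{\tilde{Y},V}\lesssim1$ (Corollary~\ref{cor:4.6}, Proposition~\ref{prop:3.7}) and $V\hookrightarrow C_b([0,\infty]\times\mbb{T})$, this yields $\norm{h-h_0}_{\sup}\lesssim\norm{f-f_0}_Y<C\epsilon$. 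After shrinking $\epsilon$ we get $\tfrac12\le h\le2$ uniformly, so $r=(\tfrac{1}{\mu}\beta^{-2\mu}h)^{1/2}$ is a well-defined positive continuous function of $(\beta,\phi)$ with $r\sim\beta^{-\mu}$, the implicit constants being independent of $\phi$; in particular $r\to+\infty$ as $\beta\to0+$ and $r\to0$ as $\beta\to+\infty$.

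Next I would prove strict monotonicity along the characteristics $\phi=\theta-\beta$. Since $\frac{d}{d\beta}F(\beta,\theta-\beta)=-(\del_\rho F)(\beta,\theta-\beta)$ for $\del_\rho=\del_\phi-\del_\beta$, and a direct computation gives
\[\del_\rho(r^{2})=\tfrac{1}{\mu}\,\del_\rho\big(\beta^{-2\mu}h\big)=\tfrac{1}{\mu}\,\beta^{-2\mu-1}(D_\rho+2\mu)h,\]
it suffices to show $(D_\rho+2\mu)h>0$. Here $(D_\rho+2\mu)h_0=2\mu$, while $\norm{D_\rho(h-h_0)}_{\sup}\lesssim\norm{f-f_0}_Y$ because $\norm{D_\rho}_{Y,\tilde{V}}\lesssim1$ and $\norm{D_\rho}_{\tilde{Y},V}\lesssim1$ (Corollary~\ref{cor:4.6}, Proposition~\ref{prop:3.7}) and $V,\tilde{V}\hookrightarrow C_b$. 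Hence, after shrinking $\epsilon$, $(D_\rho+2\mu)h\ge\mu>0$ and $\del_\rho(r^{2})>0$, so $\beta\mapsto r(\beta,\theta-\beta)$ is strictly decreasing; together with the asymptotics of the first step it is, for each fixed $\theta$, a continuous strictly decreasing bijection of $(0,\infty)$ onto $(0,\infty)$. This proves \eqref{eq:5.4} and shows that $\Phi:(\beta,\phi)\mapsto(r,\theta)$ with $\theta\equiv\beta+\phi\,(\text{mod}\,2\pi)$ is a bijection of $\R_+\times\mbb{T}$ onto itself: given $(r,\theta)$, take the unique $\beta$ with $r(\beta,\theta-\beta)=r$ and set $\phi=\theta-\beta$.

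To conclude that $\Phi$ is a $C^{1}$-diffeomorphism I would apply the inverse function theorem. The Jacobian of $\Phi$ has rows $(\del_\beta r,\del_\phi r)$ and $(\del_\beta\theta,\del_\phi\theta)=(1,1)$, so its determinant is $\del_\beta r-\del_\phi r=-\del_\rho r=-\del_\rho(r^{2})/(2r)$, which is nonzero (indeed negative) everywhere by the previous paragraph. Its entries are continuous on $\R_+\times\mbb{T}$: by the same operator bounds (Corollary~\ref{cor:4.6}, Proposition~\ref{prop:3.7}) one has $\del_\phi h\in V\hookrightarrow C_b$ and $D_\rho h\in\tilde{V}\hookrightarrow C_b$, so $\del_\beta h=\del_\phi h-\beta^{-1}D_\rho h$ is continuous for $\beta>0$; therefore $r^{2}=\tfrac{1}{\mu}\beta^{-2\mu}h$ is of class $C^{1}$ on $\R_+\times\mbb{T}$, and since $r>0$ so are $\del_\phi r=\del_\phi(r^{2})/(2r)$ and $\del_\beta r=\del_\beta(r^{2})/(2r)$. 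A $C^{1}$ map with everywhere-invertible differential that is also a bijection is a $C^{1}$-diffeomorphism.

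I expect the only genuine content to be the transfer of the $Y$-smallness to simultaneous uniform bounds: positivity of $h$ (so $r$ is defined and $r\sim\beta^{-\mu}$), positivity of $(D_\rho+2\mu)h$ (so $r$ decreases along characteristics), and continuity of $\del_\beta h$ (so $\Phi$ is $C^{1}$) — each an immediate consequence of the operator-norm estimates of Sections~\ref{sec:3}–\ref{sec:4} — together with the elementary identity $\del_\rho(r^{2})=\tfrac{1}{\mu}\beta^{-2\mu-1}(D_\rho+2\mu)h$. The rest is the standard "local diffeomorphism plus global injectivity $\Rightarrow$ global diffeomorphism" argument, so I anticipate no serious obstacle.
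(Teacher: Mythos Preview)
Your proposal is correct and follows essentially the same approach as the paper's proof: both hinge on the identity $r^2=\tfrac{1}{\mu}\beta^{-2\mu}h$, use the operator bounds of Proposition~\ref{prop:3.7} and Corollary~\ref{cor:4.6} to show $\norm{h-1}_{\sup}$ and $\norm{(D_\rho+2\mu)h-2\mu}_{\sup}$ are $\lesssim\epsilon$, deduce from the first that $r\sim\beta^{-\mu}$ (giving \eqref{eq:5.4}) and from the second that the Jacobian determinant $-\del_\rho r=\tfrac{1}{2\mu r}\beta^{-2\mu-1}(D_\rho+2\mu)h$ is nonvanishing and $r$ is strictly monotone along characteristics, then conclude global diffeomorphism from local diffeomorphism plus bijectivity. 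Your version is in fact slightly more explicit about the $C^1$ regularity of the map (continuity of $\del_\beta h$), which the paper leaves implicit.
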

\begin{proof}
First, the Jacobian determinant is
\begin{equation} \label{eq:5.5}
\det \begin{pmatrix} \del_\beta r & \del_\phi r \\ \del_\beta \theta & \del_\phi \theta \end{pmatrix} = - \del_\rho r = \frac{\del_{\rho\beta}\Psi}{2\mu r}.
\end{equation}
Recall that
\[\del_{\rho\beta}\Psi = - \beta^{-2\mu - 1} (D_\rho + 2\mu)h,\]
where $h = -D_\beta f + (2\mu - 1)f = 2\mu f - g$. Since
\[\norm{(D_\rho + 2\mu)h - 2\mu}_{L^\infty} \leq \norm{(D_\rho + 2\mu)h - (D_\rho + 2\mu)h_0}_V \lesssim \norm{f - f_0}_Y < \epsilon,\]
the Jacobian determinant is negative and $(\beta, \phi) \mapsto (r,\theta)$ is a local diffeomorphism for sufficiently small $\epsilon > 0$.

Next,
\[\mu r^2 = -\del_\beta \Psi = \beta^{-2\mu} h, \quad 2\mu r \frac{d}{d\beta}r(\beta,\theta-\beta) = \del_{\rho\beta} \Psi = -\beta^{-2\mu - 1} (D_\rho + 2\mu)h.\]
If $\norm{h - 1}_{L^\infty}$ and $\norm{(D_\rho + 2\mu)h - 2\mu}_{L^\infty} \lesssim \epsilon$ are sufficiently small, $\beta \mapsto r(\beta, \theta-\beta)$ is strictly decreasing and \eqref{eq:5.4} is true. It shows that $\beta \in \R_+ \mapsto r(\beta, \theta-\beta) \in \R_+$ is bijective for any $\theta\in \mbb{T}$. Therefore, $(\beta, \phi) \in \R_+ \times \mbb{T} \mapsto (r,\theta) \in \R_+ \times \mbb{T}$ is bijective and is a $C^1$-diffeomorphism.
\end{proof}

From the previous proof, we get
\[\beta^{2\mu} r^2 = \mu^{-1} h \in [\mu^{-1} - C\epsilon, \mu^{-1} +C\epsilon],\]
for some constant $C>0$. For sufficiently small $\epsilon > 0$, which is chosen in the argument above,
\[r \sim \beta^{-\mu}~\text{and}~\beta \sim r^{-\frac{1}{\mu}}.\]

\begin{lemma} \label{lem:5.5}
The self-similar profiles $\Psi$ and $U = \grad^\perp \Psi$, for stream function and velocity, respectively, are continuous functions on $\R^2$.
\end{lemma}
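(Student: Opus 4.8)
The plan is to handle $\R^2\setminus\{0\}$ and the origin separately. On the punctured plane I would transport everything through the coordinate change: by the previous proposition the map $(\beta,\phi)\in\R_+\times\mbb{T}\mapsto(r,\theta)\in\R_+\times\mbb{T}$ is a $C^1$-diffeomorphism, so the formal identities \eqref{eq:2.6}--\eqref{eq:2.7}, together with the displayed formulas for $r\del_r\Psi$ and $\del_\theta\Psi$ just before \eqref{eq:2.9}, are now rigorously valid, and it suffices to check that $\Psi$ and the polar components of $U=\grad^\perp\Psi$ are continuous functions of $(\beta,\phi)$ on $(0,\infty)\times\mbb{T}$.

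First I would collect the regularity inputs. Since $f\in Y$, Corollary~\ref{cor:4.6} gives $f,\del_\phi f\in V$ and $D_\rho f\in\tilde V$; since $g=(D_\beta+1)f\in\tilde Y$, Proposition~\ref{prop:3.7} gives $g,\del_\phi g,D_\rho g\in V$; and writing $h=2\mu f-g$ it follows that $h,\del_\phi h\in V$ and $(D_\rho+2\mu)h,\,(D_\rho+2\mu-1)f\in\tilde V$. As recalled in Section~\ref{subsec:5.2}, $V$ and $\tilde V$ embed continuously into $C_b([0,\infty]\times\mbb{T})$, so all of these are bounded continuous functions of $(\beta,\phi)$; moreover smallness of $\norm{f-f_0}_Y$ keeps $(D_\rho+2\mu)h$ within $O(\epsilon)$ of $2\mu$, hence bounded away from $0$. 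Consequently $\Psi=\beta^{1-2\mu}f$ is continuous on $(0,\infty)\times\mbb{T}$, and since $r\sim\beta^{-\mu}$ is continuous and positive there, so are
\[
\del_r\Psi=\frac{2\beta^{1-2\mu}}{r}\cdot\frac{h\,(D_\rho+2\mu-1)f}{(D_\rho+2\mu)h},
\qquad
\frac1r\del_\theta\Psi=\frac{\beta^{1-2\mu}}{r}\Big(\del_\phi f-\frac{\del_\phi h\,(D_\rho+2\mu-1)f}{(D_\rho+2\mu)h}\Big).
\]
Expressing $\grad^\perp\Psi$ in the orthonormal polar frame (with coefficients $\del_r\Psi$ and $r^{-1}\del_\theta\Psi$ and frame vectors depending continuously on $\theta=\beta+\phi$) then shows $U$ is continuous on $\R^2\setminus\{0\}$, and likewise $\Psi$ is continuous there.

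For the origin I would use the scaling. As $x\to 0$ one has $r\to 0$, hence $\beta\to\infty$ because $\beta\sim r^{-1/\mu}$. The bounds above give $\abs{\Psi}\lesssim\beta^{1-2\mu}\to 0$ (since $1-2\mu<0$) and $\abs{\del_r\Psi},\abs{r^{-1}\del_\theta\Psi}\lesssim\beta^{1-2\mu}/r\sim\beta^{1-\mu}\to 0$, the last limit being precisely where the hypothesis $\mu>1$ enters (beyond its use in the linear theory). Hence $\grad^\perp\Psi(x)\to 0$ as $x\to 0$, so defining $\Psi(0)=0$ and $U(0)=0$ extends both to continuous functions on all of $\R^2$; in fact $\abs{\Psi(x)}\lesssim r^{2-1/\mu}=o(\abs{x})$, which also yields $\Psi\in C^1_{\text{loc}}(\R^2)$.

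I expect the proof to be mostly bookkeeping. The one genuine point --- and the place I would be most careful --- is the behavior at the origin, where the decisive fact is that the exponent $1-\mu$ governing $\grad\Psi$ near $r=0$ is negative; everything else reduces to the continuous embeddings $V,\tilde V\hookrightarrow C_b$ and the non-degeneracy of $(D_\rho+2\mu)h$, both already in hand.
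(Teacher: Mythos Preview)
Your proposal is correct and follows essentially the same approach as the paper: both arguments transport continuity through the $C^1$-diffeomorphism on $\R^2\setminus\{0\}$ using the explicit formulas for $r\del_r\Psi$ and $\del_\theta\Psi$ in adapted coordinates, and then handle the origin via the decay $|\grad\Psi|\lesssim r^{1-1/\mu}\sim\beta^{1-\mu}\to 0$. The only cosmetic difference is that the paper first records the intermediate quantities $\del_\rho\Psi,\del_\phi\Psi,\del_{\rho\beta}\Psi,\del_{\phi\beta}\Psi$ before assembling $\del_r\Psi$ and $r^{-1}\del_\theta\Psi$, whereas you quote the assembled formulas directly; your closing remark that $\Psi\in C^1_{\text{loc}}$ follows not from $\Psi=o(|x|)$ alone but from the continuity of $\grad\Psi$ you just established, so that sentence could be tightened.
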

\begin{proof}
Since $\beta \sim r^{-\frac{1}{\mu}}$,
\begin{equation} \label{eq:5.6}
\Psi = \beta^{1-2\mu} f \sim \beta^{1-2\mu} \sim r^{2 - \frac{1}{\mu}}.
\end{equation}
Also note that
\begin{align*}
\del_\rho \Psi &= \beta^{-2\mu}(D_\rho f + (2\mu - 1) f) \sim \beta^{-2\mu} \sim r^2, \\
\abs{\del_\phi \Psi} &= \beta^{-2\mu + 1} \abs{\del_\phi f} \lesssim \epsilon \beta^{-2\mu + 1} \lesssim \epsilon r^{2 - \frac{1}{\mu}}, \\
\del_{\rho \beta} \Psi &= - \beta^{-2\mu - 1} (D_\rho + 2\mu) h \sim -\beta^{-2\mu - 1} \sim -r^{2 + \frac{1}{\mu}}, \\
\abs{\del_{\phi \beta} \Psi} &= \beta^{-2\mu} \abs{\del_\phi h} \lesssim \epsilon \beta^{-2\mu} \lesssim \epsilon r^{2}.
\end{align*}
By \eqref{eq:2.6} and \eqref{eq:2.7}, we get
\[ \del_r \Psi = -2\mu r \frac{\del_\rho \Psi}{\del_{\rho\beta}\Psi} \sim r^{1 - \frac{1}{\mu}},\]
and
\[ \abs{\frac{1}{r} \del_\theta \Psi} = \abs{\frac{1}{r} \del_\phi \Psi - \frac{1}{r} \frac{\del_{\phi\beta}\Psi}{\del_{\rho \beta} \Psi} \del_\rho \Psi} \lesssim \epsilon r^{1-\frac{1}{\mu}}.\]
Summing up,
\begin{equation} \label{eq:5.7}
|U| = |\grad \Psi| \lesssim r^{1 - \frac{1}{\mu}}.
\end{equation}
It is straightforward that $\Psi$ and $U$ are continuous in $(\beta,\phi) \in (0,\infty) \times \mbb{T}$. Since the change of coordinate $(\beta,\phi) \in (0,\infty) \times \mbb{T} \mapsto (r,\theta) \in (0,\infty) \times \mbb{T}$ is a $C^1$-diffeomorphism, $\Psi$ and $U$ are continuous in $(r,\theta) \in (0,\infty) \times \mbb{T}$. Moreover, $\Psi$ and $U$ are continuous even at the origin by estimates \eqref{eq:5.6} and \eqref{eq:5.7}.
\end{proof}

Recall that the vorticity profile $\Omega$ is given by \eqref{eq:2.8}:
\[\Omega = (\del_\rho \Psi)^{-\frac{1}{2\mu}} \Gamma = \beta (D_\rho f + (2\mu - 1)f)^{-\frac{1}{2\mu}} \Gamma.\]
From the previous estimates,
\[ \norm{(D_\rho f + (2\mu - 1)f)^{-\frac{1}{2\mu}} - 1}_{\mcal{A}^{\frac{1}{2}} \mcal{C}^\delta} \lesssim \epsilon \ll 1.\]
Thus $|\Omega| \leq \beta |\Gamma|$. Recall \eqref{eq:5.5} that the Jacobian determinant is given by
\[ \abs{ \det \frac{\del(x_1, x_2)}{\del(\beta,\phi)}} = \frac{-\del_{\rho \beta} \Psi}{2\mu} \lesssim \beta^{-2\mu - 1}. \]
Suppose $\Gamma\in L^q$ for some $q \in [1,\infty)$, then for $p\in [1,2\mu) \cap [1,q]$,
\[ \int_{|r| \leq R} |\Omega|^p \,dx \lesssim \int_{C R^{-\frac{1}{\mu}}}^\infty \int_{\mbb{T}} |\Omega|^p \beta^{-2\mu - 1} \,d\phi \,d\beta \lesssim \norm{\Gamma}_{L^p(\mbb{T})}^p \int_{C R^{-\frac{1}{\mu}}}^\infty \beta^{-2\mu-1+p} \,d\beta \lesssim \norm{\Gamma}_{L^q(\mbb{T})}^p R^{2 - \frac{p}{\mu}} < \infty. \]
In particular, $\Omega\in L^1_{\text{loc}}$ when $\Gamma\in L^1$. From now on, we assume $\Gamma\in L^1 \cap \mcal{A}^{-\frac{1}{2}}$ unless otherwise stated.

\begin{lemma} \label{lem:5.6}
$\grad \cdot ((U - \mu x) \Omega) + (2\mu - 1)\Omega = 0$ in $\mcal{D}'(\R^2)$.
\end{lemma}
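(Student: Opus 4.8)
The identity is simply the conservation form of the self-similar characteristic equation $(\grad^\perp\Psi-\mu x)\cdot\grad\Omega=\Omega$, which it becomes upon using $\grad\cdot\grad^\perp\Psi=0$ and $\grad\cdot x=2$; the content of the lemma is that this passes to the distributional setting, and I would prove it by testing against an arbitrary $\varphi\in C^\infty_c(\R^2)$ and changing to the adapted coordinates. Write $V=\grad^\perp\Psi-\mu x$ and $\tilde\varphi(\beta,\phi)=\varphi(x(\beta,\phi))$ under the $C^1$-diffeomorphism $(\beta,\phi)\mapsto(r,\theta)$ constructed in this section. Since $\varphi$ has compact support and $r\sim\beta^{-\mu}$, the function $\tilde\varphi$ vanishes for $\beta$ near $0$ and extends continuously to $\beta=\infty$ with $\tilde\varphi(\infty,\cdot)=\varphi(0)$. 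By \eqref{eq:5.5} the change of variables gives $dx=-\frac{\del_{\rho\beta}\Psi}{2\mu}\,d\beta\,d\phi$ (recall $\del_{\rho\beta}\Psi<0$), and because $V\cdot\grad\phi=0$ by \eqref{eq:2.4} while $V\cdot\grad\beta=-2\mu\,\del_\rho\Psi/\del_{\rho\beta}\Psi$ (the computation preceding \eqref{eq:2.8}), we obtain $V\cdot\grad\varphi=-2\mu\frac{\del_\rho\Psi}{\del_{\rho\beta}\Psi}\del_\beta\tilde\varphi$.

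Substituting and cancelling the Jacobian factors,
\begin{align*}
\langle\grad\cdot(V\Omega),\varphi\rangle &= -\int_{\R^2}(V\cdot\grad\varphi)\,\Omega\,dx = -\int_{\mbb{T}}\!\!\int_0^\infty(\del_\rho\Psi)\,\Omega\,(\del_\beta\tilde\varphi)\,d\beta\,d\phi,\\
\langle(2\mu-1)\Omega,\varphi\rangle &= -\frac{2\mu-1}{2\mu}\int_{\mbb{T}}\!\!\int_0^\infty(\del_{\rho\beta}\Psi)\,\Omega\,\tilde\varphi\,d\beta\,d\phi,
\end{align*}
so the lemma reduces to $\int\!\!\int(\del_\rho\Psi)\Omega\,\del_\beta\tilde\varphi=-\frac{2\mu-1}{2\mu}\int\!\!\int(\del_{\rho\beta}\Psi)\Omega\,\tilde\varphi$. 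Integrating by parts in $\beta$, the boundary term at $\beta=0$ vanishes since $\tilde\varphi\equiv0$ there, and the one at $\beta=\infty$ vanishes since $\tilde\varphi$ is bounded while $(\del_\rho\Psi)\Omega=(\del_\rho\Psi)^{1-\frac{1}{2\mu}}\Gamma$ decays like $\beta^{-(2\mu-1)}$ (using $\del_\rho\Psi=\beta^{-2\mu}(D_\rho+2\mu-1)f$ with $(D_\rho+2\mu-1)f$ bounded above and below, as in the proof of Lemma~\ref{lem:5.5}, and $2\mu-1>0$). Thus the whole identity reduces to the pointwise statement on $(0,\infty)\times\mbb{T}$:
\[\del_\beta\big[(\del_\rho\Psi)\,\Omega\big]=\tfrac{2\mu-1}{2\mu}\,(\del_{\rho\beta}\Psi)\,\Omega.\]

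To prove this I would note that on the open set $(0,\infty)$ the profile $\del_\rho\Psi=\beta^{-2\mu}(\beta\del_\phi f+2\mu f-g)$ is continuous, strictly positive (since $\norm{f-f_0}_Y<\epsilon$ and $(D_\rho+2\mu-1)f_0=1$), and of class $C^1$ in $\beta$: indeed $\hat f_n\in C^1(0,\infty)$ with $\del_\beta\hat f_n=\beta^{-1}(\hat g_n-\hat f_n)$, and $\hat g_n=L_{n,-}^{-1}v_n\in C^1(0,\infty)$ with $\del_\beta\hat g_n=\beta^{-1}\big((\textrm{i}n\beta+2\mu-1-a_n)\hat g_n-v_n\big)$, the Fourier series converging in $C^1_{\text{loc}}(0,\infty)$ by Lemma~\ref{lem:3.6} and Corollary~\ref{cor:4.6}. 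Hence, using $\Omega=(\del_\rho\Psi)^{-\frac{1}{2\mu}}\Gamma(\phi)$ from \eqref{eq:2.8} and the classical chain rule for $t\mapsto t^{1-\frac{1}{2\mu}}$,
\[\del_\beta\big[(\del_\rho\Psi)\Omega\big]=\del_\beta\big[(\del_\rho\Psi)^{1-\frac{1}{2\mu}}\big]\,\Gamma(\phi)=\Big(1-\tfrac{1}{2\mu}\Big)(\del_\rho\Psi)^{-\frac{1}{2\mu}}(\del_{\rho\beta}\Psi)\,\Gamma(\phi)=\tfrac{2\mu-1}{2\mu}(\del_{\rho\beta}\Psi)\,\Omega,\]
as needed. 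I expect the main obstacle to be the bookkeeping of low regularity: $\Gamma$, and hence $\Omega$, is only a distribution in $\phi$, so the displayed double integrals must be read as the $\beta$-integration carried out first (producing an $\mcal{A}^{\frac12}$-regular function of $\phi$ after multiplication by $\tilde\varphi$) and then paired with $\Gamma\in X=\mcal{A}^{-\frac12}$, with the passage of $\del_\beta$, the integration by parts, and the chain rule all acting on this $\phi$-parametrised family and justified by the uniform bounds of (III) and Proposition~\ref{prop:3.3}(c); alternatively one first proves the identity for smooth $\Gamma$ and passes to the limit using the continuous dependence $\Gamma\mapsto f$ from Theorem~\ref{thm:2.1}. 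Finally, since the $\beta=\infty$ boundary term vanished, the same computation shows there is no concentration at the origin, so the identity holds in $\mcal{D}'(\R^2)$ and not merely in $\mcal{D}'(\R^2\setminus\{0\})$.
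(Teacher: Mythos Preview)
Your proof is correct and follows essentially the same route as the paper: change variables to $(\beta,\phi)$, use $V\cdot\grad\phi=0$ and $V\cdot\grad\beta=-2\mu\,\del_\rho\Psi/\del_{\rho\beta}\Psi$, and recognize the resulting integrand as the total $\beta$-derivative of $(\del_\rho\Psi)^{1-\frac{1}{2\mu}}\Gamma\,\tilde\varphi$, with the boundary term at $\beta=\infty$ killed by the decay $(\del_\rho\Psi)^{1-\frac{1}{2\mu}}\lesssim\beta^{1-2\mu}$. The paper combines the two pieces into a single exact derivative in one line rather than splitting and then integrating by parts, and it works under the standing hypothesis $\Gamma\in L^1$ (from $\mathring\omega\in L^1$ in Theorem~\ref{thm:2.2}), so your careful discussion of $\Gamma\in\mcal{A}^{-\frac12}$ and of $C^1$-in-$\beta$ regularity for the chain rule, while correct, is more than the paper needs here.
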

\begin{proof}
Since $\Omega\in L^1_{\text{loc}}$ and $U \in C^0_{\text{loc}}$, the product $U \Omega$ is locally integrable. We verify the weak formulation by tracing back the derivation of \eqref{eq:2.8}. For any $\Phi\in C^\infty_c(\R^2)$, 
\begin{align} \label{eq:5.7} \begin{split}
& \int_{\R^2} -\Omega (U - \mu x) \cdot \grad \Phi + (2\mu - 1)\Omega \Phi \,dx \\
& \qquad = \iint_{\R_+ \times \mbb{T}} \Gamma (\del_\rho \Psi)^{-\frac{1}{2\mu}} \Big( 2\mu\frac{\del_\rho \Psi}{\del_{\rho\beta}\Psi} \del_\beta \Phi + (2\mu - 1)\Phi \Big) \frac{-\del_{\rho\beta}\Psi}{2\mu} \,d\beta\,d\phi \\
& \qquad = -\int_{\mbb{T}} \Gamma \int_{0}^\infty \del_\beta \Big( (\del_\rho \Psi)^{1-\frac{1}{2\mu}} \Phi \Big) \,d\beta \,d\phi \\
&\qquad = -\int_{\mbb{T}} \Gamma \bigg[(\del_\rho \Psi)^{1- \frac{1}{2\mu}} \Phi \bigg]_{\beta = 0}^{\beta = \infty} \,d\phi = 0.
\end{split}\end{align}
The second line is due to $C^1$-change of variable $(r,\theta)\in \R_+ \times \mbb{T} \to (\beta,\phi) \in \R_+ \times \mbb{T}$ and the third line is Fubini's theorem. Notice that the change of variable formula and Fubini's theorem are verified since the integrand is absolutely integrable as $U\Omega$ is locally integrable. In the last expression, the term inside vanishes at $\beta = 0$ and $\beta \to \infty$. Since $\Phi$ is compactly supported, $\Phi = 0$ for $\beta \leq \beta_0$ for some $\beta_0$. And
\[ (\del_\rho \Psi)^{1 - \frac{1}{2\mu}} \lesssim \beta^{-2\mu + 1} \to 0~\text{as}~\beta\to\infty \qedhere\]
\end{proof}

\begin{remark}
To validate the weak formulation of the vorticity equation, we require at least $\mu > \frac{2}{3}$. Assuming $\Gamma$ is bounded, we have
\[|\Omega| \lesssim \beta \lesssim r^{-\frac{1}{\mu}}, \quad |U| \lesssim r^{1- \frac{1}{\mu}}.\]
For the product $|U\Omega| \lesssim r^{1 - \frac{2}{\mu}}$ to be locally integrable, we must have $\mu > \frac{2}{3}$. This condition is naturally satisfied in our setting, where $\mu > 1$.
\end{remark}

\begin{lemma} \label{lem:5.7}
$\Delta \Psi = \Omega$ in the weak sense, i.e., $\int_{\R^2} \Omega \Phi \,dx = -\int_{\R^2} \grad \Psi \cdot \grad \Phi \,dx$ for any $\Phi \in C^\infty_c(\R^2)$.
\end{lemma}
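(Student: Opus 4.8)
Both sides make sense for $\Phi\in C^\infty_c(\R^2)$, since $\grad\Psi$ is continuous on $\R^2$ (Lemma~\ref{lem:5.5}) and $\Omega\in L^1_{\mathrm{loc}}(\R^2)$. The plan is to run the passage from Poisson's equation to \eqref{eq:2.9} of Section~\ref{sec:2} in reverse, in the spirit of the proof of Lemma~\ref{lem:5.6}. Put $A_1\triangleq 2r\del_r\Psi-\frac{\del_{\phi\beta}\Psi}{\del_\beta\Psi}\del_\theta\Psi$ and $A_2\triangleq\frac{\del_{\rho\beta}\Psi}{\del_\beta\Psi}\del_\theta\Psi$, so that the divergence-form rewriting of $\Delta\Psi=\Omega$ found in Section~\ref{sec:2} is $\mu\del_\rho A_1+\mu\del_\phi A_2+\del_{\rho\beta}\Psi\,\Omega=0$. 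Matching the Section~\ref{sec:2} formulas for $r\del_r\Psi,\ \del_\theta\Psi,\ \del_{\phi\beta}\Psi,\ \del_\beta\Psi$ against \eqref{eq:2.10}, and using $\del_\rho\Psi=\beta^{-2\mu}(D_\rho+2\mu-1)f$ and $\Omega=(\del_\rho\Psi)^{-\frac{1}{2\mu}}\Gamma$, one finds $\mcal{N}_1(f)=\beta^{2\mu-1}A_1$, $\mcal{N}_2(f)=\beta^{2\mu}A_2$, $\mcal{N}_3(f)\,\Gamma=-\beta^{2\mu}\del_{\rho\beta}\Psi\,\Omega$; since moreover $(D_\rho+2\mu-1)(\beta^{2\mu-1}A_1)=\beta^{2\mu}\del_\rho A_1$ and $\del_\phi(\beta^{2\mu}A_2)=\beta^{2\mu}\del_\phi A_2$, the equation $\mcal{F}(\Gamma,f)=0$ is equivalent to
\[
\mu\del_\rho A_1+\mu\del_\phi A_2+\del_{\rho\beta}\Psi\,\Omega=\beta^{-2\mu}\mcal{F}(\Gamma,f)=0\qquad\text{in }\mcal{D}'\!\big((0,\infty)\times\mbb{T}\big).
\]
This is a legitimate distributional identity because $A_1,A_2,\del_{\rho\beta}\Psi\,\Omega\in L^1_{\mathrm{loc}}\!\big((0,\infty)\times\mbb{T}\big)$: the $\mcal{N}_i(f)$ lie in $V$ or $\tilde V$ and $\beta^{-1}\Omega\in\mcal{A}^{-\frac12}\mcal{C}^\delta$ (Sections~\ref{subsec:5.1}--\ref{subsec:5.2}).

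\textbf{Change of coordinates.} Fix $\Phi\in C^\infty_c(\R^2)$ and let $\tilde\Phi(\beta,\phi)$ be $\Phi$ read off in the adapted coordinates. Since $(\beta,\phi)\mapsto(r,\theta)$ is a $C^1$-diffeomorphism, $\tilde\Phi$ is $C^1$; it vanishes for all sufficiently small $\beta$ (there $r$ is large and $\Phi$ has compact support), is bounded, and $\tilde\Phi\to\Phi(0)$ as $\beta\to\infty$. Using $dx=\frac{-\del_{\rho\beta}\Psi}{2\mu}\,d\beta\,d\phi$ (from \eqref{eq:5.5} and $\del_\beta\Psi=-\mu r^2$) together with the operator identities \eqref{eq:2.6}, \eqref{eq:2.7} applied to both $\Psi$ and $\tilde\Phi$, a direct computation --- the algebra of Section~\ref{sec:2} run backwards, resting on $\del_r\Psi\,\del_r\Phi\,dx=(r\del_r\Psi)\,\del_\rho\tilde\Phi\,d\beta\,d\phi$ and the analogous change-of-variables identity for the $\del_\theta$-part of the gradient --- produces
\[
2\mu\!\int_{\R^2}\!\grad\Psi\cdot\grad\Phi\,dx=\mu\!\iint_{\R_+\times\mbb{T}}\!\!\big(A_1\del_\rho\tilde\Phi+A_2\del_\phi\tilde\Phi\big)\,d\beta\,d\phi,\qquad 2\mu\!\int_{\R^2}\!\Omega\,\Phi\,dx=-\!\iint_{\R_+\times\mbb{T}}\!\!\del_{\rho\beta}\Psi\,\Omega\,\tilde\Phi\,d\beta\,d\phi.
\]
Thus it suffices to show $\mu\iint(A_1\del_\rho\tilde\Phi+A_2\del_\phi\tilde\Phi)=\iint\del_{\rho\beta}\Psi\,\Omega\,\tilde\Phi$, which is exactly the divergence-form identity above paired against $\tilde\Phi$ after one integration by parts in $(\beta,\phi)$.

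\textbf{Main obstacle: the $\beta=\infty$ end.} The one genuine difficulty is that $\tilde\Phi$ is \emph{not} compactly supported in $\beta$ --- it is eventually the nonzero constant $\Phi(0)$ --- so it cannot be fed directly into the distribution $\mcal{F}(\Gamma,f)=0$, whose test functions lie in $\mcal{D}[0,\infty]$. I handle this by a cutoff: let $\zeta_R(\beta)$ equal $1$ on $[0,R]$ and $0$ on $[2R,\infty)$, and pair the divergence-form identity against the admissible function $\zeta_R\tilde\Phi$, obtaining for every $R$
\[
\mu\iint\zeta_R\big(A_1\del_\rho\tilde\Phi+A_2\del_\phi\tilde\Phi\big)=\mu\iint A_1(\del_\beta\zeta_R)\tilde\Phi+\iint\zeta_R\,\del_{\rho\beta}\Psi\,\Omega\,\tilde\Phi.
\]
Let $R\to\infty$. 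The integrands $A_1\del_\rho\tilde\Phi+A_2\del_\phi\tilde\Phi=\frac{-\del_{\rho\beta}\Psi}{\mu}\grad\Psi\cdot\grad\Phi$ and $\del_{\rho\beta}\Psi\,\Omega\,\tilde\Phi$ are absolutely integrable on $(0,\infty)\times\mbb{T}$ --- these are the pullbacks of the finite integrals $\int\grad\Psi\cdot\grad\Phi\,dx$ and $\int\Omega\Phi\,dx$ --- so the first and third terms converge by dominated convergence; and the commutator term obeys $\big|\mu\iint A_1(\del_\beta\zeta_R)\tilde\Phi\big|\lesssim R^{1-2\mu}\to0$, since $\del_\beta\zeta_R$ is supported in $\{R\le\beta\le2R\}$ with $|\del_\beta\zeta_R|\lesssim R^{-1}$ and there $|A_1|\lesssim\beta^{1-2\mu}$ by the asymptotics of Lemma~\ref{lem:5.5} ($r\sim\beta^{-\mu}$, $r\del_r\Psi\sim r^{2-1/\mu}$, $\del_\theta\Psi=O(\epsilon\,r^{2-1/\mu})$), using $2\mu>1$. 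This yields the desired equality, and together with the change-of-variables identities it gives $\int_{\R^2}(\grad\Psi\cdot\grad\Phi+\Omega\,\Phi)\,dx=0$. Finally, $\Psi$ is only $C^1$ in $(r,\theta)$ (the coordinate map is $C^1$ but not $C^2$), so every step is read distributionally; this is harmless, since each differentiated quantity ($A_1,A_2,\del_{\rho\beta}\Psi\,\Omega$) is $L^1_{\mathrm{loc}}$ and $\tilde\Phi$ is $C^1$, so only one integration by parts is ever needed.
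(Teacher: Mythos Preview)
Your proof is correct and follows essentially the same approach as the paper's: both rewrite $\Delta\Psi=\Omega$ as the divergence-form identity $\mu\del_\rho A_1+\mu\del_\phi A_2+\del_{\rho\beta}\Psi\,\Omega=0$ in adapted coordinates, pair it against $\tilde\Phi$, and handle the singularity at the origin by a cutoff. The only cosmetic difference is that the paper first restricts to $\Phi\in C^1_c(\R^2\setminus\{0\})$ (so that $\tilde\Phi$ is genuinely compactly supported in $\beta$) and then passes to general $\Phi$ via a cutoff $\eta_\delta$ in the $x$-variable near the origin, whereas you cut off directly in $\beta$ at the large-$\beta$ end with $\zeta_R$; since $\beta\sim r^{-1/\mu}$, these are the same maneuver, and your commutator bound $R^{1-2\mu}$ matches the paper's $\delta^{2-1/\mu}$ under $\delta\sim R^{-\mu}$.
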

\begin{proof}
Since $\mcal{F}(\Gamma, f) = 0$, the statement can be shown by tracing the following computation which was done in section \ref{sec:2}:
\[-\del_{\rho\beta} \Psi (\Delta \Psi - \Omega) = \mu \del_\rho \bigg( \underbrace{2r\del_r \Psi - \frac{\del_{\phi\beta} \Psi}{\del_\beta \Psi} \del_\theta \Psi}_{=\, \beta^{-2\mu+1} \mcal{N}_1(f)} \bigg) + \mu \del_\phi \bigg( \underbrace{\frac{\del_{\rho\beta}\Psi}{\del_{\beta}\Psi} \del_\theta \Psi}_{=\, \beta^{-2\mu} \mcal{N}_2(f)} \bigg) + \underbrace{\del_{\rho\beta}\Psi (\del_\rho \Psi)^{-\frac{1}{2\mu}} \Gamma}_{=\, -\beta^{-2\mu} \mcal{N}_3(f)} = \beta^{-2\mu}  \mcal{F}(\Gamma, f).\]
For any $\Phi\in C^1_c(\R^2 \setminus \{0\})$,
{\allowdisplaybreaks\begin{align*}
-\langle \Omega, \Phi \rangle_{x\in \R^2} &= \frac{1}{2\mu} \langle \del_{\rho\beta} \Psi (\del_\rho \Psi)^{-\frac{1}{2\mu}} \Gamma, \Phi \rangle_{(\beta, \phi)\in \R_+ \times \mbb{T}} \\
& = -\frac{1}{2\mu} \langle \mcal{N}_3(f), \beta^{-2\mu} \Phi \rangle_{(\beta, \phi)} \\
& = -\frac{1}{2} \big\langle (D_\rho + 2\mu - 1) \mcal{N}_1(f) + \del_\phi \mcal{N}_2(f) , \beta^{-2\mu} \Phi \big\rangle_{(\beta, \phi)} \\
& = \frac{1}{2} \langle \mcal{N}_1(f), \beta^{-2\mu} D_\rho\Phi \rangle_{(\beta, \phi)} + \frac{1}{2} \langle \mcal{N}_2(f), \beta^{-2\mu} \del_\phi \Phi \rangle_{(\beta, \phi)} \\
& = \frac{1}{2} \left\langle 2r\del_r \Psi - \frac{\del_{\phi\beta} \Psi}{\del_\beta \Psi} \del_\theta \Psi, \del_\rho\Phi \right\rangle_{(\beta, \phi)} \hspace{-.5em} + \frac{1}{2} \left\langle \frac{\del_{\rho\beta}\Psi}{\del_{\beta}\Psi} \del_\theta \Psi, \del_\phi\Phi \right\rangle_{(\beta, \phi)} \\
& = \langle r\del_r \Psi, \del_\rho\Phi \rangle_{(\beta,\phi)} + \left\langle r^{-1} \del_\theta \Psi, \frac{- \del_{\rho\beta}\Psi}{2\mu r} \del_\theta \Phi \right\rangle_{(\beta,\phi)} \\
& = \iint_{\R_+ \times \mbb{T}} \del_r\Psi \del_r \Phi  \,r dr d\theta + \iint_{\R_+ \times \mbb{T}} r^{-1} \del_\theta \Psi \del_\theta \Phi \, dr d\theta = \int_{\R^2} \grad \Psi \cdot \grad \Phi \,dx.
\end{align*}}
Note that we used
\[\del_r = \frac{2\mu r}{-\del_{\rho\beta}\Psi} \del_\rho = \abs{\frac{\del(r,\theta)}{\del(\beta,\phi)}}^{-1} \hspace{-.5em} \del_\rho, ~~ \del_\theta = \del_\phi - \frac{\del_{\phi\beta}\Psi}{\del_{\rho\beta}\Psi} \del_\rho.\]
Fix a smooth function $\eta \in C^\infty_c[0,\infty)$ that $\eta(r) = 0$ for $r\geq 1$ and $\eta(r) = 1$ for $r \leq \frac{1}{2}$. For each $\delta>0$, define $\eta_\delta(x) = \eta(|x|/\delta)$. For any $\Phi\in C^\infty_c(\R^2)$, the argument above implies
\[\langle \Omega, \Phi (1-\eta_\delta) \rangle = \int_{\R^2} \grad \Psi \cdot \grad (\Phi(1-\eta_\delta)) \,dx, ~\text{for any}~\eta > 0.\]
Thus, it remains to show that $\langle \Omega, \Phi \eta_\delta \rangle \to 0$ and $\langle \grad \Psi, \grad(\Phi\eta_\delta) \rangle \to 0$ as $\delta \to 0$. From the estimates on $\Omega$ and $U$,
\[\langle \Omega, \Phi \eta_\delta \rangle = \frac{1}{2\mu} \langle (-\del_{\rho\beta} \Psi) \Omega, \Phi \eta_\delta \rangle_{(\beta,\phi)} \lesssim \int_{C \delta^{-\frac{1}{\mu}}}^\infty \beta^{-2\mu} \,d\beta \lesssim \delta^{2 - \frac{1}{\mu}} \to 0 ~~\text{as}~ \delta \to 0,\]
and
\[\langle \grad \Psi, \grad(\Phi \eta_\delta) \rangle \lesssim \int_0^\delta r^{1 - \frac{1}{\mu}} \, \delta^{-1} \, rdr \sim \delta^{2 - \frac{1}{\mu}} \to 0 ~~\text{as}~ \delta \to 0. \qedhere\]
\end{proof}

The two lemmas above show that $\Omega$ is a weak solution of the self-similar equation \eqref{eq:2.3}. It follows easily that $U$ is also a weak solution of the self-similar equation, corresponding to \eqref{eq:1.1}.

\begin{theorem} \label{thm:5.8} $U$ is a weak solution of the self-similar equation
\begin{equation*}
  (\mu - 1) U + (U - \mu x) \cdot \grad U + \grad P = 0
\end{equation*}
on $\R^2$, i.e.,
\begin{equation*}
  \int_{\R^2} (3\mu - 1) U \cdot \Phi  - ((U-\mu x) \otimes U) : \grad \Phi \,dx = 0, ~\text{for any}~\Phi \in C^\infty_c(\R^2 ; \R^2),~ \text{div} \Phi = 0.
\end{equation*}
\end{theorem}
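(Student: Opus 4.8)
The plan is to deduce the velocity weak formulation from the vorticity weak formulation of Lemma~\ref{lem:5.6}, after rewriting the test field via a stream function. First, since $\Phi\in C^\infty_c(\R^2;\R^2)$ is divergence-free and compactly supported, there is $\varphi\in C^\infty_c(\R^2)$ with $\Phi=\grad^\perp\varphi$: one may take $\varphi(x_1,x_2)=\int_{-\infty}^{x_1}\Phi_2(s,x_2)\,ds$ and check that $\grad^\perp\varphi=\Phi$, that $\varphi$ is smooth, and that $\varphi$ vanishes off $\mathrm{supp}\,\Phi$ because $\int_\R\Phi_2(s,x_2)\,ds$ is independent of $x_2$ and vanishes for $|x_2|$ large. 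With $U=\grad^\perp\Psi$ and $\Omega=\Delta\Psi$ (Lemma~\ref{lem:5.7}), the goal becomes the scalar identity $\int_{\R^2}(3\mu-1)U\cdot\Phi-((U-\mu x)\otimes U):\grad\Phi\,dx=0$.

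Second, I would prove a purely kinematic identity, valid for every smooth $V=\grad^\perp\Xi$ on $\R^2$ (writing $\Omega_V=\Delta\Xi$) and every $\varphi\in C^\infty_c$ with $\Phi=\grad^\perp\varphi$:
\[\int_{\R^2}(3\mu-1)V\cdot\Phi-\big((V-\mu x)\otimes V\big):\grad\Phi\,dx=\int_{\R^2}\Omega_V(V-\mu x)\cdot\grad\varphi-(2\mu-1)\Omega_V\varphi\,dx.\]
This follows by integration by parts against the compactly supported $\varphi$, with three ingredients: (i) $\mathrm{div}\,V=0$ gives $\int(V\otimes V):\grad\Phi=-\int(V\cdot\grad V)\cdot\Phi$, and the Bernoulli identity $V\cdot\grad V=\grad(\tfrac12|V|^2)-\Omega_V\grad\Xi$ together with $\grad(\tfrac12|V|^2)\cdot\Phi$ integrating to zero (div-free $\Phi$) and $\grad\Xi\cdot\grad^\perp\varphi=-V\cdot\grad\varphi$ yields $-\int(V\otimes V):\grad\Phi=\int\Omega_V\,V\cdot\grad\varphi$; (ii) the dilation identity $(x\cdot\grad)V=\grad^\perp(x\cdot\grad\Xi)-V$ together with $\Delta(x\cdot\grad\Xi)=2\Omega_V+x\cdot\grad\Omega_V$, after two further integrations by parts, gives $\int(x\otimes V):\grad\Phi=-\int V\cdot\Phi-\int\Omega_V\,x\cdot\grad\varphi$; (iii) $\int V\cdot\Phi=\int\grad\Xi\cdot\grad\varphi=-\int\Omega_V\varphi$. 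Substituting these into $(3\mu-1)\int V\cdot\Phi-\int(V\otimes V):\grad\Phi+\mu\int(x\otimes V):\grad\Phi$ and collecting coefficients produces the stated identity.

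Third, I would pass to the limit by mollification. Set $\Psi^\epsilon=\Psi*\rho_\epsilon$, $U^\epsilon=\grad^\perp\Psi^\epsilon=U*\rho_\epsilon$, and $\Omega^\epsilon=\Delta\Psi^\epsilon=\Omega*\rho_\epsilon$ (the last equality uses Lemma~\ref{lem:5.7}). Each $U^\epsilon$ is smooth and divergence-free, so the kinematic identity applies with $V=U^\epsilon$, $\Xi=\Psi^\epsilon$. As $\epsilon\to0$, on a fixed compact neighbourhood of $\mathrm{supp}\,\varphi$ one has $U^\epsilon\to U$ uniformly ($U$ is continuous by Lemma~\ref{lem:5.5}) and $\Omega^\epsilon\to\Omega$ in $L^1$ (since $\Omega\in L^1_{\mathrm{loc}}$, cf. Theorem~\ref{thm:2.2}); as $\grad\varphi$ and $\grad\Phi$ are bounded with compact support, every integral converges, so the identity holds verbatim with $U,\Omega$ in place of $U^\epsilon,\Omega^\epsilon$. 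Finally, its right-hand side equals $\int\Omega(U-\mu x)\cdot\grad\varphi-(2\mu-1)\int\Omega\varphi$, which vanishes by Lemma~\ref{lem:5.6} tested against $\varphi$; hence the left-hand side vanishes, which is exactly the asserted weak formulation.

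The main technical point is legitimising the integrations by parts for the merely continuous $U$ with $\Omega$ only in $L^1_{\mathrm{loc}}$. Isolating the kinematic identity (standard calculus for smooth fields) from the PDE content (Lemma~\ref{lem:5.6}), and then mollifying, resolves this cleanly: it is precisely $\Omega\in L^1_{\mathrm{loc}}$ that makes the bilinear terms $\Omega^\epsilon U^\epsilon$ and $\Omega^\epsilon x$ converge. The remainder—sign bookkeeping in the Bernoulli and dilation identities—is routine.
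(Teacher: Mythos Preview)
Your proof is correct and follows essentially the same route as the paper: both derive the velocity weak formulation from Lemma~\ref{lem:5.6} by means of the vector-calculus identities $\grad^\perp\!\cdot(\grad\!\cdot(U\otimes U))=\grad\!\cdot(U\Omega)$ and $\grad^\perp\!\cdot(\grad\!\cdot(x\otimes U))=\grad\!\cdot(x\Omega)+\Omega$, together with $\Phi=\grad^\perp\varphi$. The paper simply asserts these identities in $\mcal{D}'(\R^2)$ using $U\in C^0_{\text{loc}}$ and $\Omega\in L^1_{\text{loc}}$; your integrated kinematic identity for smooth $V$ followed by mollification is precisely the rigorous justification of that step, so your argument is a more detailed rendering of the same proof rather than a genuinely different one.
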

\begin{proof}
$\Omega = \grad^\perp \cdot U\in L^1_{\text{loc}}$ by Lemma \ref{lem:5.7} and $U\in C^0_{\text{loc}}$ by Lemma \ref{lem:5.5}. Thus,
\[\grad^\perp \cdot (\grad \cdot (U\otimes U)) = \grad \cdot (U\Omega)~~\text{in}~\mcal{D}'(\R^2),\]
and
\[\grad^\perp \cdot (\grad \cdot (x \otimes U)) = \grad \cdot (x\Omega) + \Omega ~~\text{in}~\mcal{D}'(\R^2).\]
Summing up, we obtain the desired result from Lemma \ref{lem:5.6}:
\[\grad^\perp \cdot(\grad\cdot((U-\mu x)\otimes U) + (3\mu - 1) U) = \grad^\perp((U-\mu x) \Omega) + (2\mu - 1)\Omega = 0 ~~\text{in}~\mcal{D}'(\R^2). \qedhere\]
\end{proof}

We construct $\omega$, $u$, and $\psi$ in the original coordinates by \eqref{eq:2.2}. The initial data is given by $\omega_0 = r^{-\frac{1}{\mu}} \mathring{\omega}(\theta)$. The following lemma explicitly reveals the relation between $\Gamma$ and $\mathring{\omega}$.

\begin{lemma} \label{lem:5.9}
Without assuming $\mathring{\omega}\in L^1$, we have $\omega(t,\cdot) \rightharpoonup \omega_0$ in $\mcal{D}'(\R^2)$ as $t\to 0+$, where $\mathring{\omega} = \mu^{-\frac{1}{2\mu}} \Gamma$. If $\Gamma\in L^p$ for some $p\in [1,\infty]$, then $\omega(t,\cdot) \to \omega_0$ in $L^q_{\text{loc}}$ with $q \in [1,p] \cap [1, 2\mu)$.
\end{lemma}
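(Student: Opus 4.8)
The plan is to test $\omega(t,\cdot)$ against $\Phi\in C^\infty_c(\R^2)$, carry the integral into the adapted coordinates $(\beta,\phi)$, rescale $\beta$ by $t$, and pass to $t\to0+$. Using $\omega(t,x)=t^{-1}\Omega(t^{-\mu}x)$, the substitution $y=t^{-\mu}x$, the change of variables $(\beta,\phi)\mapsto y$ whose Jacobian is $-\del_{\rho\beta}\Psi/(2\mu)=\beta^{-2\mu-1}(D_\rho+2\mu)h/(2\mu)$ read off from \eqref{eq:5.5}, the representation $\Omega=\beta\,P\,\Gamma(\phi)$ with $P\triangleq\big((D_\rho+2\mu-1)f\big)^{-1/2\mu}$ coming from \eqref{eq:2.8}, and finally the substitution $\beta=t\xi$, one arrives at
\[
\langle\omega(t,\cdot),\Phi\rangle=\int_0^\infty\!\!\int_{\mbb{T}}\xi^{-2\mu}\,A(t\xi,\phi)\,\Gamma(\phi)\,\Phi\big(Y_t(\xi,\phi)\big)\,d\phi\,d\xi,\qquad A\triangleq\tfrac1{2\mu}\,P\,(D_\rho+2\mu)h,
\]
where $Y_t(\xi,\phi)\in\R^2$ has polar coordinates $\big((\mu^{-1}h(t\xi,\phi))^{1/2}\,\xi^{-\mu},\;t\xi+\phi\big)$. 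Because $\Phi$ has compact support and $h$ is bounded above and below, the $\xi$-integral effectively runs over $\xi\ge\xi_0>0$ with $\xi_0$ uniform in $t\in(0,1]$, and $\xi^{-2\mu}$ is integrable near $\infty$ since $2\mu>1$; hence the right-hand side is absolutely convergent when $\mathring{\omega}\in L^1$, and makes sense as a distributional pairing of $\Gamma\in\mcal{A}^{-1/2}$ against the test function produced by the $\xi$-integral in general.

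The arithmetic of the limit rests on $(D_\rho f)(0,\phi)=(D_\rho h)(0,\phi)=0$. Indeed $D_\rho=-D_\beta+\mathrm{i}n\beta$ on the $n$-th mode; $f,h\in V$ and $D_\rho f,D_\rho h\in\tilde{V}$ (Corollary \ref{cor:4.6}, Proposition \ref{prop:3.7}), so $\hat f_n,\hat h_n,\widehat{D_\rho f}_n,\widehat{D_\rho h}_n$ all extend continuously to $\beta=0$; since $\mathrm{i}n\beta\,\hat f_n\to0$ and $\mathrm{i}n\beta\,\hat h_n\to0$ there, $D_\beta\hat f_n$ and $D_\beta\hat h_n$ are continuous at $0$, and a function $\psi$ continuous at $0$ with $D_\beta\psi=\beta\psi'$ also continuous at $0$ must have $(D_\beta\psi)(0)=0$, else $\psi\sim c\log\beta$ near $0$. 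With $h(0,\phi)=2\mu f(0,\phi)-g(0,\phi)=(2\mu-1)f(0,\phi)$ (because $f(0,\cdot)=g(0,\cdot)$) this yields $P(0,\phi)=h(0,\phi)^{-1/2\mu}$, $(D_\rho+2\mu)h=2\mu h(0,\phi)$ at $\beta=0$, and $A(0,\phi)=h(0,\phi)^{1-1/2\mu}$. Substituting $s=(\mu^{-1}h(0,\phi))^{1/2}\xi^{-\mu}$ in the formal $t\to0$ limit $\int_0^\infty\!\int_{\mbb{T}}\xi^{-2\mu}h(0,\phi)^{1-1/2\mu}\Gamma(\phi)\,\Phi(Y_0(\xi,\phi))\,d\phi\,d\xi$ makes every $h(0,\phi)$ factor cancel, leaving exactly $\mu^{-1/2\mu}\int_{\mbb{T}}\Gamma(\phi)\int_0^\infty s^{1-1/\mu}\Phi(s\cos\phi,s\sin\phi)\,ds\,d\phi=\langle\omega_0,\Phi\rangle$ with $\mathring{\omega}=\mu^{-1/2\mu}\Gamma$. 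So both assertions follow once the limit is justified.

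For $\mathring{\omega}\in L^p$ this is a convergence-theorem argument carried out in polar coordinates, where $\omega(t,r,\theta)-\omega_0(r,\theta)=r^{-1/\mu}B(\beta_t,\phi_t)\Gamma(\phi_t)+\mu^{-1/2\mu}r^{-1/\mu}\big(\Gamma(\phi_t)-\Gamma(\theta)\big)$; here $(\beta_t,\phi_t)$ are the adapted coordinates of $(t^{-\mu}r,\theta)$ (so $\beta_t\sim t\,r^{-1/\mu}\to0$, $\phi_t=\theta-\beta_t\to\theta$) and $B\triangleq(\mu^{-1}h)^{1/2\mu}P-\mu^{-1/2\mu}\in\mcal{A}^{1/2}\mcal{C}^\delta$ is bounded and vanishes on $\{\beta=0\}$, so $\sup_{\beta\le\delta}|B(\beta,\cdot)|\to0$ as $\delta\to0$. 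On any annulus $r^{-1/\mu}$ is bounded; the $B$-term then tends to $0$ in $L^q$ by this uniform smallness together with $\norm{\Gamma(\phi_t(r,\cdot))}_{L^q(\mbb{T})}\lesssim\norm{\Gamma}_{L^q(\mbb{T})}$ (the map $\theta\mapsto\phi_t(r,\theta)$ being almost measure-preserving), and the translation term tends to $0$ by continuity of rotation in $L^q(\mbb{T})$; near the origin the family $\{r^{-q/\mu}|\Gamma(\phi_t)|^q\}_t$ is uniformly integrable (from $|\Gamma|^q\in L^1(\mbb{T})$, $q<2\mu$), so a.e.\ convergence plus Vitali closes the $L^q_{\mathrm{loc}}$ claim, in particular $\mcal{D}'$-convergence when $\mathring{\omega}\in L^1$.

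For general $\mathring{\omega}\in\mcal{A}^{-1/2}$ I would argue by density rather than push the distributional version of the displayed formula through by hand. Approximating $\Gamma$ in $X$ by its Fourier truncations $\Gamma^{(N)}$ (which still satisfy $\norm{\Gamma^{(N)}-\Gamma_0}_X<\epsilon$), Theorem \ref{thm:2.1} gives $f^{(N)}\to f$ in $Y$, hence $\Omega^{(N)}=\beta\,P^{(N)}\Gamma^{(N)}\to\Omega$ in $\mcal{A}^{-1/2}\mcal{C}^\delta$ (Lemma \ref{lem:5.3}, Proposition \ref{prop:3.3}(c)), so $\omega^{(N)}(t,\cdot)\to\omega(t,\cdot)$ in $\mcal{D}'(\R^2)$ uniformly in $t\in(0,1]$ while $\omega^{(N)}_0\to\omega_0$ in $\mcal{D}'(\R^2)$; since $\mathring{\omega}^{(N)}$ is a trigonometric polynomial the case already proved gives $\omega^{(N)}(t,\cdot)\rightharpoonup\omega^{(N)}_0$, and a $3\varepsilon$-argument concludes. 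The one genuinely delicate point — and the reason for the density detour — is that establishing the pairing directly would require a composition (Moser-type) bound controlling $\phi\mapsto\Phi(Y_t(\xi,\phi))$ in the Wiener-type algebra $\mcal{A}^{1/2}$ uniformly in $t$ and $\xi$, which is awkward at this regularity.
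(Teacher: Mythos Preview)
Your argument is essentially correct, but it takes a considerably longer route than the paper's, and the density step for general $\Gamma\in\mcal{A}^{-1/2}$ has a gap.

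The paper bypasses your entire step-2 computation (showing $(D_\rho f)(0,\phi)=(D_\rho h)(0,\phi)=0$ mode by mode, and then $h(0,\phi)=(2\mu-1)f(0,\phi)$) by a single algebraic identity. Since $D_\rho=\beta\del_\phi-D_\beta$, the numerator and denominator in
\[
\frac{-\del_\beta\Psi}{\del_\rho\Psi}=\frac{-D_\beta f+(2\mu-1)f}{D_\rho f+(2\mu-1)f}
\]
differ by exactly $-\beta\del_\phi f$, so the ratio equals $1-\beta\,\del_\phi f/\big(D_\rho f+(2\mu-1)f\big)$. With $\norm{\del_\phi f}_{\sup}\lesssim\epsilon$ and the denominator bounded away from zero, this is $1+O(\beta)$ uniformly. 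Hence
\[
\omega(t,x)=|x|^{-1/\mu}\Big(\tfrac{-\del_\beta\Psi}{\mu\del_\rho\Psi}\Big)^{1/(2\mu)}\Gamma(\phi_t)
=|x|^{-1/\mu}\mu^{-1/(2\mu)}\big(1+O(\beta_t)\big)\Gamma(\phi_t),
\]
which immediately gives your decomposition for the $L^p$ case (your function $B$ is exactly this $O(\beta)$ term) without the integral representation, the rescaling $\beta=t\xi$, or the analysis of $A(0,\phi)$ and $Y_0$. Your treatment of the $L^p$ convergence from there on is fine and matches the paper's intent.

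The density argument for the distributional statement has a genuine loose end. You write $\Omega^{(N)}\to\Omega$ in $\mcal{A}^{-1/2}\mcal{C}^\delta$ and conclude $\omega^{(N)}(t,\cdot)\to\omega(t,\cdot)$ in $\mcal{D}'(\R^2)$ uniformly in $t$; but $\Omega^{(N)}$ is a distribution on $\R^2$ via the coordinate chart determined by $f^{(N)}$, not by $f$. The pairing $\langle\omega^{(N)}(t,\cdot),\Phi\rangle$ therefore involves $\Phi\circ Y_t^{(N)}$ and the Jacobian built from $h^{(N)}$, which differ from their unlabelled counterparts. So the Moser-type composition issue you flagged for the direct approach has not disappeared --- it has been moved into the comparison of $\omega^{(N)}$ with $\omega$. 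This is fixable (since $f^{(N)}\to f$ in $Y$ forces $h^{(N)}\to h$ and $Y_t^{(N)}\to Y_t$ uniformly, and the test function $\Phi$ is smooth), but it needs to be said; as written the implication is not justified. The paper's own proof is admittedly terse on this point as well.
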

\begin{proof}
Recall that the polar coordinates $(r,\theta)$ of the self-similar variable $x/t^\mu$ is transformed into the $(\beta,\phi)$. If we fix $x(\neq 0)\in\R^2$ and send $t\to 0+$, then $\beta \sim r^{-\frac{1}{\mu}} = t |x|^{-\frac{1}{\mu}} \to 0+$. We write
\[\omega(t,x) = t^{-1} \Omega(t^{-\mu} x) = t^{-1} (\del_\rho \Psi)^{-\frac{1}{2\mu}} \Gamma(\phi) = |x|^{-\frac{1}{\mu}} \Big( \frac{-\del_\beta\Psi}{\mu \del_\rho \Psi} \Big)^{\frac{1}{2\mu}} \Gamma(\phi).\]
Since $\norm{\del_\phi f}_{\sup}$, $\norm{D_\rho f}_{\sup}$, $\norm{(2\mu - 1)f - 1}_{\sup} \lesssim \epsilon \ll 1$,
\[\frac{-\del_\beta\Psi}{\del_\rho \Psi} = \frac{-D_\beta f + (2\mu - 1)f}{D_\rho f + (2\mu - 1)f} = 1 + \beta \frac{\del_\phi f}{D_\rho f + (2\mu - 1) f} \to 1,\]
uniformly as $\beta\to 0+$. Therefore,
\[\omega(t,x) \rightharpoonup |x|^{-\frac{1}{\mu}} \mu^{-\frac{1}{2\mu}} \Gamma(\phi), ~\text{in}~ \mcal{D}'(\R^2).\]
And if $\Gamma\in L^p$ for some $p\in [1,\infty]$, then $\omega(t,\cdot) \to \omega_0$ in $L^q_{\text{loc}}$ where $q \in [1,p] \cap [1, 2\mu)$.
\end{proof}

\begin{lemma} \label{lem:5.10}
$\psi(t,\cdot) \to \psi_0$ in $C^0_{\text{loc}}(\R^2)$ and $u(t,\cdot) \to u_0$ in $L^2_{\text{loc}}(\R^2)$ as $t\to 0+$. Recall that $\psi_0 = r^{2-\frac{1}{\mu}} \mathring{\psi}(\theta)$ and $u_0 = \grad^\perp \psi_0$, where $((2-\frac{1}{\mu})^2 + \del_\theta^2)\mathring{\psi} = \mathring{\omega}$.
\end{lemma}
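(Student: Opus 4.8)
The plan is to rescale the profile so that $t\to0+$ becomes a dilation limit $\lambda\to\infty$, and then argue by compactness in $C^0_{\text{loc}}$. Put $\lambda=t^{-\mu}$ and $\Psi_\lambda(x)\triangleq\lambda^{\frac1\mu-2}\Psi(\lambda x)$; by the self-similar form \eqref{eq:2.2} one has $\psi(t,\cdot)=\Psi_\lambda$ and $u(t,\cdot)=\grad^\perp\Psi_\lambda$, and by Lemma~\ref{lem:5.7}, $\Delta\Psi_\lambda=\Omega_\lambda$ with $\Omega_\lambda(x)=\lambda^{\frac1\mu}\Omega(\lambda x)=\omega(\lambda^{-1/\mu},\cdot)$. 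The bounds \eqref{eq:5.6} and \eqref{eq:5.7} are scale invariant, so $|\Psi_\lambda(x)|\lesssim|x|^{2-\frac1\mu}$ and $|\grad\Psi_\lambda(x)|\lesssim|x|^{1-\frac1\mu}$ uniformly in $\lambda\ge1$, and since $\mu>1$ both exponents are positive, so these estimates are locally uniform. Hence $\{\Psi_\lambda\}_{\lambda\ge1}$ is equibounded and equi-Lipschitz on every ball, and Arzelà--Ascoli makes it precompact in $C^0_{\text{loc}}(\R^2)$.

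The crux is that every subsequential limit equals $\psi_0$. Suppose $\lambda_k\to\infty$ with $\Psi_{\lambda_k}\to\psi_\infty$ in $C^0_{\text{loc}}$; then $|\psi_\infty(x)|\lesssim|x|^{2-\frac1\mu}$. Letting $k\to\infty$ in $\Delta\Psi_{\lambda_k}=\Omega_{\lambda_k}$, and using $\Omega_{\lambda_k}=\omega(\lambda_k^{-1/\mu},\cdot)\rightharpoonup\omega_0$ in $\mcal{D}'(\R^2)$ by Lemma~\ref{lem:5.9}, gives $\Delta\psi_\infty=\omega_0=\Delta\psi_0$, the last identity because $\Delta\big(r^{2-\frac1\mu}\mathring{\psi}(\theta)\big)=r^{-\frac1\mu}\big((2-\tfrac1\mu)^2\mathring{\psi}+\mathring{\psi}''\big)=r^{-\frac1\mu}\mathring{\omega}=\omega_0$. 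Thus $v\triangleq\psi_\infty-\psi_0$ is continuous, harmonic on $\R^2$ (Weyl), and satisfies $|v(x)|\lesssim|x|^{2-\frac1\mu}$; since $2-\frac1\mu<2$ it is affine (Liouville), and $v(0)=0$ makes it linear. Finally, for any unit vector $e$ and $s>0$, $|v(se)|=s|v(e)|\lesssim s^{2-\frac1\mu}$, and since $2-\frac1\mu>1$ (this is where $\mu>1$ re-enters) letting $s\to0$ forces $v(e)=0$; hence $v\equiv0$. So every subsequential limit is $\psi_0$, whence $\Psi_\lambda\to\psi_0$ in $C^0_{\text{loc}}$, i.e. $\psi(t,\cdot)\to\psi_0$ in $C^0_{\text{loc}}(\R^2)$.

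For the velocity, set $w_t\triangleq\psi(t,\cdot)-\psi_0\in C^1_{\text{loc}}(\R^2)$ (Lemma~\ref{lem:5.5} for $\psi(t,\cdot)$; for $\psi_0$ use $\mathring{\psi}\in C^1(\mbb{T})$ and $2-\frac1\mu>0$). The previous step gives $\norm{w_t}_{L^\infty(B_R)}\to0$; moreover $\Delta w_t=\omega(t,\cdot)-\omega_0\to0$ in $L^1_{\text{loc}}$ by Lemma~\ref{lem:5.9} (recall $\mathring{\omega}\in L^1$), and $|\grad w_t(x)|\le|u(t,x)|+|u_0(x)|\lesssim|x|^{1-\frac1\mu}$ uniformly in $t$ by \eqref{eq:5.7}, so $\int_{B_R}|\grad w_t|$ stays bounded as $t\to0+$. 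Fix $\phi\in C^\infty_c(B_R)$ with $\phi\equiv1$ on $B_{R/2}$; integrating $\grad\cdot(\phi^2 w_t\grad w_t)=\phi^2|\grad w_t|^2+\phi^2 w_t\,\Delta w_t+2\phi w_t\,\grad\phi\cdot\grad w_t$ over $\R^2$ gives
\[\int\phi^2|\grad w_t|^2=-\int\phi^2 w_t\,\Delta w_t-2\int\phi\,w_t\,\grad\phi\cdot\grad w_t,\]
and each right-hand term is bounded by $\norm{w_t}_{L^\infty(B_R)}$ times a constant independent of $t$, hence tends to $0$. Therefore $\grad w_t\to0$ in $L^2(B_{R/2})$, and as $R$ is arbitrary, $u(t,\cdot)=\grad^\perp\psi(t,\cdot)\to\grad^\perp\psi_0=u_0$ in $L^2_{\text{loc}}(\R^2)$.

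The main difficulty is the identification step: compactness and the equation pin down $\psi_\infty$ only up to a harmonic function, Liouville narrows this to an affine one, and it is precisely the exponent $2-\frac1\mu\in(1,2)$---available because $\mu>1$---that removes the leftover linear freedom, so that only the $\mcal{D}'$-convergence of the vorticities from Lemma~\ref{lem:5.9} is needed. A secondary point is passing from $C^0_{\text{loc}}$ control of the stream functions to $L^2_{\text{loc}}$ control of the velocities: since the vorticity is only $L^1_{\text{loc}}$ there is no uniform second-order bound on $\Psi$, and the integration-by-parts identity above is what supplies the missing compactness.
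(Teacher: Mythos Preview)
Your proof is correct and follows essentially the same approach as the paper: Arzel\`a--Ascoli compactness from the uniform bounds \eqref{eq:5.6}--\eqref{eq:5.7}, identification of the limit via $\Delta\psi_\infty=\omega_0$ (Lemmas~\ref{lem:5.7}, \ref{lem:5.9}) and a Liouville argument exploiting $1<2-\tfrac1\mu<2$, followed by an integration-by-parts identity to upgrade $C^0_{\text{loc}}$ convergence of $\psi$ to $L^2_{\text{loc}}$ convergence of $\nabla\psi$. The only cosmetic differences are that the paper phrases the first step as a proof by contradiction and, for the second step, integrates by parts on a hard ball $B_R$ (producing a boundary term on $\partial B_R$) rather than using your smooth cutoff $\phi^2$; both variants yield the same conclusion.
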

\begin{proof}
The strategy of proof is similar to that of Proposition 7.3 in \cite{SWZ25}. Suppose, for sake of contradiction, there is a sequence $t_n \to 0+$, $R>0$, and $\epsilon > 0$ such that $\norm{\psi(t_n,\cdot) - \psi_0}_{C^0(\bar{B}_R)} \geq \epsilon$. From the estimates \eqref{eq:5.6} and \eqref{eq:5.7},
\[|\psi(t,x)| \lesssim |x|^{2-\frac{1}{\mu}}, \quad |\grad \psi(t,x)| \lesssim |x|^{1 - \frac{1}{\mu}}.\]
By Arzelà–Ascoli theorem, there is a subsequence $t_{n_k}$ so that $\psi(t_{n_k}, \cdot) \to \tilde{\psi}_0$ uniformly on compact sets. By Lemma \ref{lem:5.7} and \ref{lem:5.9}, $\Delta \tilde{\psi}_0 = \Delta \psi_0 = \omega_0$. Thus, $\tilde{\psi}_0 - \psi_0$ is a harmonic function that is bounded by $Cr^{2-\frac{1}{\mu}}$. Since $1 < 2 - \frac{1}{\mu} < 2$, the Liouville-type theorem implies that $\tilde{\psi}_0 - \psi_0$ must be an affine function. The behavior near origin rules out nontrivial affine functions, so $\tilde{\psi}_0 - \psi_0 \equiv 0$. This contradicts the assumption, since $\psi(t_{n_k}, \cdot) \to \psi_0$ uniformly on $\bar{B}_R$. Therefore, $\psi(t,\cdot) \to \psi_0$ in $C^0_{\text{loc}}(\R^2)$ as $t\to 0+$. For the convergence of $u$,
\begin{align*}
&\int_{|x|\leq R} |u(t,x) - u_0(x)|^2 \,dx \\
& \quad = \int_{|x| = R} (\psi(t,x) - \psi_0(x)) (u(t,x) - u_0(x)) \cdot \,d\sigma(x) - \int_{|x|\leq R} (\psi(t,x) - \psi_0(x)) (\omega(t,x) - \omega_0(x)) \,dx \\
& \quad \lesssim \norm{\psi(t,\cdot) - \psi_0}_{C^0(\del B_R)} \cdot R^{1-\frac{1}{\mu}} \cdot R + \norm{\psi(t,\cdot) - \psi_0}_{C^0(B_R)} \norm{\omega(t,\cdot) - \omega_0}_{L^1(B_R)} \to 0 ,~~\text{as}~t\to 0+. \qedhere
\end{align*}
\end{proof}

We now present the proof of our main result. Before proceeding, we clarify a technical point regarding temporal scaling. In the statement of Theorem \ref{thm:2.2}, we assumed $\norm{\mathring{\omega} - \hat{\mathring{\omega}}_0}_X \ll |\hat{\mathring{\omega}}_0|$. Whereas, in the previous sections, we worked with $\norm{\Gamma - \Gamma_0}_X \ll 1$. This difference can be easily resolved by considering $\omega_\lambda = \lambda^{-1} \omega(\lambda^{-1} t, x)$ with appropriate $\lambda = \mu^{\frac{1}{2\mu}} |\hat{\mathring{\omega}}_0|$.

\noindent\tbf{Theorem \ref{thm:2.2}.} The velocity field $u \in C([0,\infty); L^2_{\text{loc}}(\R^2))$ is a weak solution of the incompressible Euler equations \eqref{eq:1.1} and the vorticity $\omega\in C([0,\infty); L^1_{\text{loc}}(\R^2))$ is a weak solution of the vorticity equation \eqref{eq:1.2}.

\vspace{.5em}\begin{proof}
Let $\phi(t,x) \in C^\infty_c([0,\infty) \times \R^2; \R^2)$ be a divergence-free test vector field. Define $y = t^{-\mu} x$ and $\Phi(t,y) = t^{3\mu-1} \phi(t,x)$ for $t>0$ and $x\in\R^2$. Then $\Phi(t,\cdot) \in C^\infty_c(\R^2;\R^2)$ for each $t>0$. From Theorem \ref{thm:5.8}, we get
\begin{align*}
& \int_{\R^2} u \cdot \del_t \phi + (u \otimes u):\grad_x \phi \,dx \\
& \qquad = \int_{\R^2} U \cdot \del_t \Phi \,dy - t^{-1} \int_{\R^2} (3\mu - 1) U \cdot \Phi + U \cdot (\mu y \cdot \grad \Phi) - (U \otimes U) : \grad \Phi \,dy \\
& \qquad = \int_{\R^2} U \cdot \del_t \Phi \,dy = \frac{d}{dt} \int_{\R^2} U \cdot \Phi \,dy = \frac{d}{dt} \int_{\R^2} u \cdot \phi \,dx,
\end{align*}
for each $t> 0$. Since $u(t,\cdot) \to u_0$ in $L^2_{\text{loc}}$ as $t\to 0+$, we get
\[ \int_{\R^2} u_0 \cdot \phi(0, \cdot) \,dx + \int_0^\infty \int_{\R^2} u \cdot \del_t \phi + (u\otimes u) : \grad_x \phi \,dx \,dt = 0. \]
The vorticity equation is similarly derived from Lemma \ref{lem:5.6}. Let $\phi(t,x)\in C^\infty_c([0,\infty)\times \R^2)$ be a smooth test function. Define $y = t^{-\mu} x$ and $\Phi(t,y) = t^{2\mu - 1} \phi(t,x)$ for $t>0$ and $x\in \R^2$. Then,
\begin{align*}
& \int_{\R^2} \omega (\del_t \phi + u\cdot \grad \phi) \,dx \\
& \qquad = \int_{\R^2} \Omega \cdot \del_t \Phi \,dy - t^{-1} \int_{\R^2} (2\mu - 1) \Omega \Phi + \Omega(U-\mu y)\cdot \grad\Phi \,dy \\
& \qquad = \int_{\R^2}  \Omega \del_t \Phi \,dy = \frac{d}{dt} \int_{\R^2} \Omega \Phi \,dy = \frac{d}{dt} \int_{\R^2} \omega \phi \,dx,
\end{align*}
for each $t>0$. Since $\omega(t,\cdot)\to \omega_0$ in $L^1_{\text{loc}}$ as $t\to 0+$, we get
\[\int_{\R^2} \omega_0 \phi(t=0) \,dx + \int_0^\infty \int_{\R^2} \omega (\del_t \phi + u \cdot \grad \phi) \,dx\,dt = 0. \qedhere\]
\end{proof}

\begin{cor} \label{cor:5.11}
In Theorem \ref{thm:2.2}, suppose $\mathring{\omega}$ is a bounded measure instead of an $L^1$ function. Then, the velocity field $u \in C([0,\infty); L^2_{\text{loc}}(\R^2))$ is a weak solution of \eqref{eq:1.1} and $\omega\in C([0,\infty); \mcal{D}'(\R^2))$.
\end{cor}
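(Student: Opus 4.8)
The plan is to reduce to the $L^1$ case, which is Theorem~\ref{thm:2.2}, by a mollification argument (as indicated after Remark~4 and as carried out in Section~8 of \cite{SWZ25}). The hypothesis still includes $\norm{\mathring{\omega} - \hat{\mathring{\omega}}_0}_X < \epsilon$, so in particular $\mathring{\omega} \in X = \mcal{A}^{-\frac{1}{2}}$; after the temporal rescaling $\lambda = \mu^{\frac{1}{2\mu}} \hat{\mathring{\omega}}_0$ (cf.\ the paragraph preceding Theorem~\ref{thm:2.2}) we may assume $\Gamma = \mu^{\frac{1}{2\mu}}\mathring{\omega}$ satisfies $\norm{\Gamma - \Gamma_0}_X < \epsilon$. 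Let $\mathring{\omega}^{(k)}$ be the $k$-th Fej\'er mean of $\mathring{\omega}$, so that $\mathring{\omega}^{(k)}$ is a trigonometric polynomial (hence in $L^1$), $\hat{\mathring{\omega}}^{(k)}_0 = \hat{\mathring{\omega}}_0$, and $\hat{\mathring{\omega}}^{(k)}_n = (1 - |n|/k)_+ \hat{\mathring{\omega}}_n$. Then $\norm{\mathring{\omega}^{(k)} - \hat{\mathring{\omega}}_0}_X \leq \norm{\mathring{\omega} - \hat{\mathring{\omega}}_0}_X < \epsilon$, and since the summands defining $\norm{\mathring{\omega}^{(k)} - \mathring{\omega}}_X$ are bounded by $\langle n \rangle^{-\frac{1}{2}} |\hat{\mathring{\omega}}_n| \in \ell^1(\Z)$ and converge to $0$ pointwise in $n$, dominated convergence gives $\mathring{\omega}^{(k)} \to \mathring{\omega}$ in $X$. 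Applying Theorem~\ref{thm:2.2} to each $\mathring{\omega}^{(k)}$ yields weak solutions $u^{(k)} \in C([0,\infty); L^2_{\text{loc}})$ of \eqref{eq:1.1} and $\omega^{(k)} \in C([0,\infty); L^1_{\text{loc}})$ of \eqref{eq:1.2}, with profiles $U^{(k)} = \grad^\perp \Psi^{(k)}$ and $\Omega^{(k)} = \Delta \Psi^{(k)}$ built from $f^{(k)} \in Y$ with $\norm{f^{(k)} - f_0}_Y \lesssim \epsilon$ uniformly in $k$.

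Next I would pass to the limit. By the Lipschitz estimate of Theorem~\ref{thm:2.1}, $\norm{f^{(k)} - f^{(\ell)}}_Y \lesssim \norm{\Gamma^{(k)} - \Gamma^{(\ell)}}_X \to 0$, so $f^{(k)} \to f$ in $Y$, where $f$ is the solution of Theorem~\ref{thm:2.1} for $\Gamma = \mu^{\frac{1}{2\mu}}\mathring{\omega}$; set $\Psi = \beta^{1-2\mu}f$, $U = \grad^\perp \Psi$, $\Omega = \Delta \Psi$, and define $u, \omega, \psi$ by \eqref{eq:2.2}. Running the reconstruction of Section~\ref{subsec:5.2} for the difference of two nearby data (the defining relations $r^2 = \mu^{-1}\beta^{-2\mu}h$, $\theta = \beta + \phi$ depend on $f$ in a Lipschitz way, with constants uniform over a small ball in $Y$) gives the pointwise bound $|U^{(k)} - U|(x) \leq C |x|^{1 - \frac{1}{\mu}} \norm{f^{(k)} - f}_Y$. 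Combining this with the self-similar scaling $u(t,x) = t^{\mu - 1}U(t^{-\mu}x)$ and the substitution $y = t^{-\mu}x$, all powers of $t$ cancel and one obtains, for every $R > 0$, $\sup_{t > 0} \norm{u^{(k)}(t,\cdot) - u(t,\cdot)}_{L^2(B_R)} \leq C(R)\norm{f^{(k)} - f}_Y \to 0$; the same computation gives $\sup_{t > 0}\norm{u^{(k)}(t,\cdot)}_{L^2(B_R)} \leq C(R)$ uniformly in $k$. Moreover $u_0^{(k)} = \grad^\perp(r^{2 - \frac{1}{\mu}}\mathring{\psi}^{(k)})$ with $((2 - \frac{1}{\mu})^2 + \del_\theta^2)\mathring{\psi}^{(k)} = \mathring{\omega}^{(k)}$, and since this constant-coefficient inversion is bounded $X \to \mcal{A}^{\frac{3}{2}} \hookrightarrow C^1(\mbb{T})$, we get $\mathring{\psi}^{(k)} \to \mathring{\psi}$ in $C^1$, hence $u_0^{(k)} \to u_0$ in $L^2_{\text{loc}}$. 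Therefore $u \in C([0,\infty); L^2_{\text{loc}})$ with $u(0,\cdot) = u_0$, and since $u^{(k)} \to u$ and $u^{(k)} \otimes u^{(k)} \to u \otimes u$ in $L^1_{\text{loc}}([0,\infty) \times \R^2)$ (both from the uniform-in-$t$ $L^2_{\text{loc}}$ bounds), passing to the limit in the weak formulation of \eqref{eq:1.1} shows $u$ is a weak solution.

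For the vorticity no limit procedure is needed: Lemma~\ref{lem:5.9} is proved without the hypothesis $\mathring{\omega} \in L^1$, so $\omega(t,\cdot) \rightharpoonup \omega_0 = r^{-\frac{1}{\mu}}\mathring{\omega}(\theta)$ in $\mcal{D}'(\R^2)$ as $t \to 0+$; and for $t > 0$, $\omega(t,x) = t^{-1}\Omega(t^{-\mu}x)$ is a well-defined distribution on $\R^2$ — as noted after Lemma~\ref{lem:5.5}, $\Omega$ makes sense on $\R^2$ under only $\mathring{\omega} \in \mcal{A}^{-\frac{1}{2}}$ — and $t \mapsto \omega(t,\cdot)$ is continuous on $(0,\infty)$ by scaling. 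Together with the convergence at $t=0$ this gives $\omega \in C([0,\infty); \mcal{D}'(\R^2))$, which, with the previous paragraph, is the assertion of the corollary.

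The main obstacle is the uniform-in-$t$ estimate $\sup_{t > 0}\norm{u^{(k)}(t,\cdot) - u(t,\cdot)}_{L^2(B_R)} \leq C(R)\norm{f^{(k)} - f}_Y$. It needs two ingredients: first, that the entire coordinate reconstruction of Section~\ref{subsec:5.2} depends on $f$ in a Lipschitz way, so that the profile difference $U^{(k)} - U$ carries the small factor $\norm{f^{(k)} - f}_Y$ and not merely the uniform bound $|U| \lesssim |x|^{1 - \frac{1}{\mu}}$; second, the observation that the dilation $x \mapsto t^{-\mu}x$ converts the $L^2(B_R)$ norm into an $L^2$ norm over the growing ball $B_{t^{-\mu}R}$ with a compensating prefactor, and this closes to a $t$-independent bound precisely because $\mu > 1$ places the radial exponent $2 - \frac{2}{\mu}$ above the borderline $-2$ (so $|x|^{1-\frac{1}{\mu}} \in L^2_{\text{loc}}$ and the outer integral is controlled by the prefactor). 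Granting this, the remaining steps — the contraction-based convergence $f^{(k)} \to f$, the weak-$\ast$ convergence for $\omega$, and the stability of the weak formulation under $L^2_{\text{loc}}$-limits — are routine.
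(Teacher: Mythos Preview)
Your approach is plausible but genuinely different from the paper's, and the step you flag as the ``main obstacle'' is exactly the one the paper's argument is designed to avoid.

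The paper also mollifies $\mathring{\omega}$, but it never attempts the pointwise Lipschitz bound $|U^{(k)} - U|(x) \lesssim |x|^{1-\frac{1}{\mu}}\norm{f^{(k)} - f}_Y$. That bound is delicate because the physical coordinates $(r,\theta)$ depend on $f$ through $r^2 = \mu^{-1}\beta^{-2\mu}h$, so comparing $U^{(k)}(x)$ and $U(x)$ at the \emph{same} $x$ forces you to compare values of $\nabla\Psi^{(k)}$ and $\nabla\Psi$ at \emph{different} $(\beta,\phi)$, and you would need Lipschitz control on the inverse coordinate map as well. This can presumably be carried out, but it is real work that you have not done.

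Instead, the paper uses only the \emph{uniform} bounds \eqref{eq:5.6}--\eqref{eq:5.7} on $\Psi^{(\epsilon)}$ and $\nabla\Psi^{(\epsilon)}$, applies the Arzel\`a--Ascoli $+$ Liouville argument of Lemma~\ref{lem:5.10} to get $\Psi^{(\epsilon)} \to \Psi$ in $C^0_{\text{loc}}$ (no Lipschitz-in-$f$ needed), extracts a weak limit $U^{(\epsilon_n)} \rightharpoonup U$ in $L^2_{\text{loc}}$, and then upgrades to strong $L^2_{\text{loc}}$ convergence by the integration-by-parts identity
\[
\int |U^{(\epsilon_n)} - U|^2 \eta_R = -\int (U^{(\epsilon_n)} - U)\cdot U\,\eta_R + \int (\Psi^{(\epsilon_n)} - \Psi)\,\nabla\cdot(\eta_R \nabla\Psi^{(\epsilon_n)}).
\]
The first term vanishes by weak convergence; the second by $\Psi^{(\epsilon_n)} \to \Psi$ in $C^0$ together with the uniform bound $\norm{\Omega^{(\epsilon_n)}}_{L^1(B_{R+1})} \lesssim \norm{\mathring{\omega}^{(\epsilon_n)}}_{L^1} \lesssim \norm{\mathring{\omega}}_{\mathcal{M}}$. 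This is where the bounded-measure hypothesis enters, and it replaces your Lipschitz reconstruction entirely.

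Your treatment of $\omega \in C([0,\infty);\mcal{D}')$ via Lemma~\ref{lem:5.9} is fine and in fact more explicit than the paper, which focuses on the velocity side.
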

\begin{proof}
We mollify $\mathring{\omega}$ to get $\mathring{\omega}\upindex{\epsilon} \in L^1$ such that $\mathring{\omega}\upindex{\epsilon} \to \mathring{\omega}$ in $\mcal{A}^{-\frac{1}{2}}$ as $\epsilon \to 0+$. Let $U\upindex{\epsilon} = \grad^\perp \Psi\upindex{\epsilon}$ be a solution to the self-similar equation with boundary data $\Gamma\upindex{\epsilon} = \mu^{\frac{1}{2\mu}} \mathring{\omega}\upindex{\epsilon}$. The estimates \eqref{eq:5.6} and \eqref{eq:5.7} are uniform in $\epsilon$. Also $\norm{\mathring{\omega}\upindex{\epsilon}}_{L^1}$ is uniformly bounded, so $\Omega\upindex{\epsilon}$ is uniformly bounded in $L^1_{\text{loc}}$. We obtain $\Psi\upindex{\epsilon} \to \Psi ~\text{in}~ C^0_{\text{loc}}$ by the same argument as the proof of Lemma \ref{lem:5.10}.

Let $U = \grad^\perp \Psi$, then $U\upindex{\epsilon_n} \rightharpoonup U$ in $L^2_{\text{loc}}$ for some sequence $\epsilon_n \to 0$. We claim that $U\upindex{\epsilon_n} \to U$ in $L^2_{\text{loc}}$. If we assume the claim, then $U$ is a weak solution of the self-similar equation. And the remainder can be shown by repeating the arguments above.

Let us finish the proof by showing the strong convergence of the velocity field. Fix $\eta \in C^\infty(\R)$ such that $\eta(s) = 1$ for $s\leq 0$, $\eta(s) = 0$ for $s\geq 1$, and $0\leq \eta\leq 1$. Let $\eta_R(x) = \eta(|x| - R)$.
\begin{equation*}
\norm{U\upindex{\epsilon_n} - U}_{L^2(B_R)}^2 \leq \int_{\R^2} |U \upindex{\epsilon_n} - U |^2 \eta_R \,dx = -\int_{\R^2} (U \upindex{\epsilon_n} - U) \cdot U \eta_R \,dx + \int_{\R^2} (U \upindex{\epsilon_n} - U) \cdot U\upindex{\epsilon_n} \eta_R \,dx.
\end{equation*}
The first term tends to $0$ as $\epsilon_n \to 0$ by the weak convergence. For the second term,
\begin{equation*}
\abs{\int_{\R^2} (\Psi\upindex{\epsilon_n} - \Psi) \grad \cdot(\eta_R \grad \Psi\upindex{\epsilon_n}) \,dx} \lesssim \norm{\Psi\upindex{\epsilon_n} - \Psi}_{L^\infty(B_{R+1})} \Big(\norm{\grad \Psi\upindex{\epsilon_n}}_{L^\infty(B_{R+1})} (R+1) + \norm{\Omega\upindex{\epsilon_n}}_{L^1(B_{R+1})} \Big)
\end{equation*}
tends to $0$ as $\epsilon_n \to 0$, because $\norm{\Psi\upindex{\epsilon_n} - \Psi}_{L^\infty(B_{R+1})} \to 0$ and $\norm{\grad \Psi\upindex{\epsilon_n}}_{L^\infty(B_{R+1})}$, $\norm{\Omega\upindex{\epsilon_n}}_{L^1(B_{R+1})}$ are uniformly bounded.
\end{proof}

\subsection*{Acknowledgement}
The author is grateful to Alexandru D. Ionescu for valuable discussions and for reviewing an early draft of this work. The author thanks Zhifei Zhang and Javier G\'{o}mez-Serrano for their comments. The author also thanks anonymous referees for valuable comments.



\begin{flushleft} \end{flushleft}

\begingroup
\small
\setlength{\parskip}{0.45em}
\noindent
\textbf{Hyungjun Choi}\\
Princeton University\\
\textit{Email Address}: \href{mailto:hyungjun.choi@princeton.edu}{\texttt{hyungjun.choi@princeton.edu}}
\endgroup
\end{document}